\newcommand{\R}{{\mathbb R}}
\newcommand{\vp}{\varphi}
\newcommand{\ren}{{\mathbb R}^N}
\newcommand{\be}[1]{\begin{equation}\label{#1}}
\newcommand{\ee}{\end{equation}}
\newcommand{\prf}{\par\smallskip\noindent{\sl Proof. \/}}
\newcommand{\finprf}{\unskip\null\hfill$\;\square$\vskip 0.3cm}
\newenvironment{proof}{\prf}{\finprf}
\newtheorem{theorem}{Theorem}[section]
\newtheorem{lemma}{Lemma}[section]
\newtheorem{corollary}[theorem]{Corollary}
\newtheorem{proposition}[theorem]{Proposition}
\newtheorem{remark}[theorem]{Remark}
\newcommand{\ve}{\varepsilon}
\numberwithin{equation}{section}
\newcommand{\blue}{\color{blue}}
\newcommand{\magenta}{\color{magenta}}
\def\qed{\,\unskip\kern 6pt \penalty 500
\raise -2pt\hbox{\vrule \vbox to8pt{\hrule width 6pt
\vfill\hrule}\vrule}\par}
\definecolor{darkblue}{rgb}{0.05, .05, .65}
\definecolor{darkgreen}{rgb}{0.1, .65, .1}
\definecolor{darkred}{rgb}{0.8,0,0}
\newcommand{\nlc}{\normalcolor}
\begin{document}
\title{\textbf{  Asymptotic behaviour of solutions  and \\ free boundaries of the  anisotropic \\ slow
diffusion equation}\\[7mm]}

\author{\Large  Filomena Feo\footnote{Dipartimento di Ingegneria, Universit\`{a} degli Studi di Napoli
\textquotedblleft Parthenope\textquotedblright, Centro Direzionale Isola C4
80143 Napoli, \    E-mail: {\tt filomena.feo@uniparthenope.it}}  \,, \quad
\Large Juan Luis V\'azquez \footnote{Departamento de Matem\'aticas, Universidad Aut\'onoma de Madrid, 28049 Madrid, Spain. \newline
E-mail:
{\tt juanluis.vazquez@uam.es}} \,,
\\[8pt] \Large  and
 \ Bruno Volzone \footnote{Dipartimento di Matematica, Politecnico di Milano,Piazza Leonardo da Vinci 32, 20133 Milano, Italy. \    E-mail: {\tt bruno.volzone@polimi.it}}}

\date{ \ } 

\maketitle

\begin{abstract}
In this paper we explore the theory of the anisotropic porous medium equation in the slow diffusion range.
After revising the basic theory, we prove the existence of self-similar fundamental solutions (SSFS) of the equation posed in the whole Euclidean space. Each of such solutions  is uniquely determined by its mass. This solution has compact support w.r.t. the space variables. We also obtain the sharp asymptotic behaviour of all finite mass solutions in terms of the family of  self-similar fundamental solutions.  Special attention is paid to the convergence of supports and free boundaries in  relative size, i.e., measured in the appropriate anisotropic way.  The fast diffusion case has been studied in a previous paper by us,  there no free boundaries appear.
\end{abstract}

\setcounter{page}{1}

\noindent {\bf 2020 Mathematics Subject Classification.}
    35K55,  	
   	35K65,   	
    35A08,   	
    35B40.   	

\medskip

\noindent {\bf Keywords: }  Nonlinear parabolic equations, slow anisotropic diffusion, fundamental solutions,  asymptotic behaviour, free boundary.

\noindent { \ }

\

\numberwithin{equation}{section}


\section{Introduction}\label{sec.intro}%

We consider the \emph{anisotropic porous medium equation} (APME)
\begin{equation}\label{APM}
u_t=\sum_{i=1}^N(u^{m_i})_{x_i x_i}\quad \mbox{in  } \ \quad Q:=\mathbb{R}^N\times(0,+\infty)
\end{equation}
in dimension $N\geq2$ with exponents $m_i>0$ for $i=1,...,N$.  In case all exponents are the same we recover the well-known equation
$$
u_t=\Delta u^m, \quad m>0\,,
$$
which for $m=1$ is just the classical heat equation. The isotropic cases have been extensively  studied in the literature, see \cite{DK07,  Vlibro, VazSmooth, WZYL}. There is an extensive list of references in the larger field of nonlinear diffusion both in theory and applications, cf.  \cite{DiB93}.
 There are not  many  papers on the anisotropic case of Eq. \eqref{APM}, although it has a clear physical
motivation in  the modelling of  fluid dynamics in anisotropic porous media \cite{Bear72}. Thus, if the conductivities of the media
are different in the different directions, the constants $m_i$ in (1.1) may be different from each other.

We will consider solutions to the Cauchy problem for \eqref{APM} with nonnegative initial data
\begin{equation}\label{IC}
u(x,0)=u_0(x), \quad x\in \R^{N}.
\end{equation}
We will assume that  $u_0\in L^1(\mathbb{R}^N)$, $u_0\ge 0$, and we put $M:=\int_{\mathbb{R}^N} u_0(x)\,dx$, so called total mass. Solutions with finite mass are natural in many physical considerations and will be the ones studied here. We also look for solutions $u\ge 0$.

The presence of the anisotropy produces several difficulties that cannot be approached by classical tools that  have made study of the case of isotropic diffusion so successful. Indeed,  the combination of self-similarity and anisotropy is an uncommon topic in the literature, see \cite{Per2006}. The influence of anisotropy adds a  large amount of  details and consequences to the qualitative and quantitative behaviour of the nonlinear diffusion process.

The mathematical problem we discuss came to our attention due the work by Song et al. in Beijing. They published a number of works on the anisotropic equation mentioned above, both for $m_i>1$ and $m_i<1$. Of interest here are  paper \cite{JS06} where continuous solutions with finite mass are constructed under suitable assumptions the anisotropy of the exponents, and \cite{JS05} where a fundamental solution is constructed for general initial data, i.e., a solution with a Dirac delta as initial data. See also \cite{S01, S01bis} It was supposed to be the basis of asymptotic long-time analysis, but such analysis resisted the passage of time. On the other hand, the questions and boundedness and continuity of the solutions have been considered in more detail in recent years in the literature, c. \cite{H11, BS19}.

In a previous paper \cite{FVV23} we have studied the case of fast diffusion, reflected in the conditions $m_i<1$ for all $i=1,\cdots, N$, under an extra restriction  $\overline{m}>m_c=(N-2)_+/N$ that ensures a unified fast diffusion theory with locally bounded solutions.  Here, $\overline{m}$ is the average of the exponents, $\overline{m}=\frac1{N}{\sum_{i=1}^N m_i}$, the most important parameter in the joint behaviour of the different diffusion terms with different scalings. The new theory keeps a strong similarity to the isotropic fast diffusion equation $u_t=\Delta u^m$ with $ m<1$. A very important property is that nonnegative initial data give rise to a class  of unique weak solutions which are positive for all $x\in\ren$ and $t>0$, and they are continuous.  After that, we were able to contribute the missing analysis of self-similarity for the self-similar fundamental solutions (SSF solutions). Such special solutions  are unique. With this  amount of extra information we were able to  prove the asymptotic behaviour for general finite mass solutions, a main goal of the study.

\medskip

\noindent {\bf Outline of contents and results.} In this paper, we are interested in extending the theory to slow diffusion case where all $m_i>1$,  taking as motivation what is known for the porous medium equation, $u_t=\Delta u^m$ with $ m>1$, that is described in the monograph \cite{Vlibro} and other references. We will therefore consider the problem formed by equation \eqref{APM} with initial data \eqref{IC}.
Our structural assumptions are:

\noindent  (H1) \centerline{$m_i> 1$ \qquad  for all $i=1,...,N$,}

\smallskip

\noindent  (H2) \centerline{$m_i<\overline{m}+2/N $ \qquad  for all $i=1,...,N$,}

where $\overline{m}$ is the average of the exponents, $\overline{m}=\frac1{N}{\sum_{i=1}^N m_i}$.

 The paper begins by a section devoted to recall the preliminaries on self-similarity, renormalization and scalings,  that adapts the concepts introduced in \cite{FVV23} for the fast diffusion case. This is followed by a section on the construction of solutions and the main properties.  We include the material for due reference but the novelties are minor with respect to the fast diffusion case.

 After that we enter into the three main topics: the theory of SSF solutions, the theory of asymptotic behaviour of general  nonnegative solutions with $L^1$ data, and the asymptotic behaviour of the support and free boundary of bounded solutions with compactly supported data. In order to gain a certain perspective of the difficulties and differences with respect to the isotropic case, we make a short comment in the final Section  \ref{sec.comments}.

Turning back to the contents, the qualitative behaviour of slow diffusion is very different from the fast one. The superlinear exponents, $m_i>1$ imply the degeneracy of the equation at the level $u=0$, which gives rise to finite speed of propagation (slow diffusion),  and the appearance of free boundaries. Qualitatively, the solutions have different properties from the fast diffusion case; they are no more positive everywhere and there is limited regularity at the border of the support. This forces the introduction of new tools. In particular, the construction of the unique self-similar fundamental solution (SSF) in Section \ref{sec.exfs} uses a new improved technique that is perfectly suited to deal with finite propagation. The SSF solutions have compact support with respect to  the space variables.  The support properties for the SSF solution $U_M$ and its profile $F_M$ are studied in detail in Section \ref{ssec.support}.

 In Section \ref{sec.asymp} we obtain the asymptotic behaviour of all finite mass solutions in terms of the family of  self-similar fundamental solutions.  Time decay   rates  are derived as well as other  properties of the solutions,  like quantitative boundedness, regularity and rate of expansion of the support with time. The combination of self-similarity and anisotropy is essential in our analysis and creates various mathematical difficulties that are addressed by means of novel methods.

To complete the study,  we establish  in Section  \ref{sec.exr.supp} the sharp large-time behaviour of the support of all nonnegative solutions that evolve from compactly supported initial data, as well as the corresponding asymptotic behaviour of the free boundary that appears as the boundary (i.e.,  the separation hypersurface  that borders the positivity set of $u(\cdot,t)$). We  precisely prove how the support and the free boundary expand in an anisotropic way as $t$ tends to infinity, and we find the exact rates in different coordinate directions.  See precise results in Theorems \ref{exp.cs} (for the supports) and \ref{exp.fb} (written in terms of free boundaries). This is a first contribution to the study of the free boundaries (existence, regularity and large-time behaviour). Such study has been done in the isotropic case with $m>1$ in great detail, see
\cite{BZ72, CaffFdmn80, CVW87, CW90, KKV16, Vlibro},  and remains still a very open task in the anisotropic case.

The final Section  \ref{sec.comments} contains  comments on  related results and open problems. We draw attention here to the comment on  the self-similar distorted balls as stable geometries of the proposed anisotropic evolution. This phenomenon should be worth examining in view of wider applications.

\medskip


\section{Preliminaries}

We will follow the main lines of exposition of the fast diffusion paper \cite{FVV23} with special emphasis on the properties and proofs that strongly distinguish the slow diffusion from the fast diffusion, like the absence of total positivity and the appearance of free boundaries.

This section reviews material on self-similarity that was essentially contained in \cite{FVV23}. 

\subsection{Self-similar solutions and self-similar profiles}\label{ssc.sss}

The self-similar solutions of equation \eqref{APM} are solutions having the form
\begin{equation}\label{sss}
U(x,t)=t^{-\alpha}F(t^{-a_1}x_1,..,t^{-a_N}x_N)
\end{equation}
with suitable constants $\alpha>0$, and $a_1,..,a_n\ge 0$, and a suitable profile $F$. Recalling that we consider only nonnegative solutions that enjoy the mass conservation property, we easily  get that $\alpha=\sum_{i=1}^{N}a_i$. Putting $a_i=\sigma_i \alpha,$ and inserting formula \eqref{sss} into equation \eqref{APM}, we determine in a unique way the values for the exponents \ $\alpha$ and $\sigma_i$ by ensuring that the times variable cancels out. This happens for the precise values (a lucky fact):
\begin{equation}\label{alfa}
\alpha=\frac{N}{N(\overline{m}-1)+2},
\end{equation}
where $\overline{m}=\frac1{N}{\sum_{i=1}^N m_i}$, and
\begin{equation}\label{ai} \ 
 \sigma_i= \frac{1}{N}+ \frac{\overline{m}-m_i}{2}.
\end{equation}

 We still have to find the profile $F$ but let us make a comment first.
Observe that Condition (H1) imposed in the Introduction guarantees that $N/2>\alpha>0$. Then the self-similar solution will decay in time in maximum value like a power of time. This is a typical feature of many diffusion processes. On the other hand, the $\sigma_i $ exponents control the expansion of the solution in the different coordinate directions with time. Condition (H2) on the $m_i$  ensures that $\sigma_i> 0$, i.e the self-similar solution expands as time passes, or at least does not contract, along any of the space coordinate variables. This again a desirable property that we will ensure.

Finally, the \sl profile function \rm $F(y)$   must satisfy the following nonlinear anisotropic stationary equation  \eqref{StatEq} in $\mathbb{R}^N$:
\begin{equation}\label{StatEq}
\sum_{i=1}^{N}\left[(F^{m_i})_{y_iy_i}+\alpha \sigma_i\left( y_i F\right)_{y_i}
\right]=0.\end{equation}
This is a direct consequence of Equation \eqref{APM}.

\begin{proposition}\label{Lem1}
$U(x,t)$ is a self-similar solution to \eqref{APM} of the form \eqref{sss} where $a_i=\alpha \sigma_i$ for all $i=1,\cdots,N$ and $\alpha$ and $\sigma_i$ satisfy \eqref{alfa} and \eqref{ai} if  and only if its profile $F$ satisfies the stationary equation \eqref{StatEq}. Moreover, $\int_{\mathbb{R}^N} U(x,t)\, dx =\int_{\mathbb{R}^N} F(y)\, dy=M$ \ {\rm for all} $t>0.$
\end{proposition}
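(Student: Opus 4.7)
The proof is essentially a direct calculation, so the plan is to substitute the self-similar ansatz \eqref{sss} into the equation \eqref{APM}, track the powers of $t$, and read off the algebraic conditions on $\alpha$ and the $\sigma_i$. I will write $y_i = t^{-a_i}x_i$ with $a_i = \alpha\sigma_i$, so that $\partial_{x_i} = t^{-a_i}\partial_{y_i}$ and $F$ is evaluated at $y=(y_1,\dots,y_N)$.

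First I would compute the time derivative by the chain rule, obtaining
\[
U_t = -t^{-\alpha-1}\Bigl(\alpha\,F + \sum_{i=1}^N a_i\, y_i F_{y_i}\Bigr) = -\alpha\, t^{-\alpha-1}\Bigl(F + \sum_{i=1}^N \sigma_i\, y_i F_{y_i}\Bigr),
\]
which can be rewritten as $-\alpha\,t^{-\alpha-1}\sum_i \sigma_i(y_i F)_{y_i}$ after using $\sum_i\sigma_i=1$ (to be justified in a moment). For the spatial terms I would compute
\[
(U^{m_i})_{x_i x_i} = t^{-\alpha m_i - 2a_i}\,(F^{m_i})_{y_i y_i}.
\]
Matching powers of $t$ on both sides of \eqref{APM} forces, for every $i$, the equality $\alpha + 1 = \alpha m_i + 2 a_i = \alpha(m_i + 2\sigma_i)$, that is,
\[
\sigma_i = \frac{1}{2\alpha} + \frac{1-m_i}{2}.
\]
Summing this identity on $i$ and imposing $\sum_i\sigma_i = 1$ yields $N/(2\alpha) = 1 + N(\overline m-1)/2$, which is exactly \eqref{alfa}; reinserting this value of $\alpha$ into the above expression gives \eqref{ai}. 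Conversely, with these values of $\alpha$ and $\sigma_i$ the $t$-powers in $U_t$ and $(U^{m_i})_{x_i x_i}$ coincide, and dividing by $t^{-\alpha - 1}$ produces exactly the stationary equation \eqref{StatEq} for $F$. This establishes the ``if and only if'' part.

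For the mass statement, the natural move is the anisotropic change of variables $y_i = t^{-a_i}x_i$, whose Jacobian is $\prod_i t^{a_i} = t^{\sum_i a_i}$. Since $a_i = \alpha\sigma_i$ and $\sum_i\sigma_i = 1$, this Jacobian equals $t^{\alpha}$, so
\[
\int_{\ren} U(x,t)\,dx = t^{-\alpha}\int_{\ren} F(t^{-a_1}x_1,\dots,t^{-a_N}x_N)\,dx = t^{-\alpha}\cdot t^{\alpha}\int_{\ren} F(y)\,dy = \int_{\ren} F(y)\,dy,
\]
independent of $t$, as claimed. The identity $\sum_i\sigma_i=1$ here is precisely the condition that encodes mass conservation in the self-similar ansatz, and it is also the one that made $\alpha$ solvable in the step above — so the two parts of the proposition fit together consistently.

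There is no genuine obstacle here; the only bookkeeping subtlety is to keep the three conditions (matching $t$-powers in each spatial term, summing to fix $\alpha$, and mass conservation) aligned in the right order, so that the computation of \eqref{alfa}--\eqref{ai} is presented as both necessary and sufficient rather than guessed. I would present the equivalence first, then conclude the invariance of the integral as a short corollary of the chain of identities.
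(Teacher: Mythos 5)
Your calculation is correct and is exactly the computation the paper itself uses (the proposition is stated right after the heuristic derivation of \eqref{alfa}--\eqref{ai}, with no separate proof): substitute the ansatz \eqref{sss}, match the powers of $t$ using $a_i=\alpha\sigma_i$ and $\sum_i\sigma_i=1$, divide by $t^{-\alpha-1}$ to obtain \eqref{StatEq}, and compute the Jacobian $t^{\sum_i a_i}=t^{\alpha}$ for the mass identity. Nothing to add.
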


We recall that in the isotropic case the profile function of the corresponding nonlinear elliptic equation is explicit:
 \begin{equation} \label{Fm}
 F(y;m)=\left( C - \frac{\alpha(m-1)}{2mN}|y|^{{2}}\right)_+^{{{1}}/(m-1)},
 \end{equation}
 with a free constant $C=C(M)>0$ that fixes the total mass $M$ of the solution. This is usually called a Barenblatt profile, \cite{Bar52}.
 As far as we know, there is no comparable explicit solution in the anisotropic equations, a difficulty of the theory. It is easy to show that self-similar solutions of the type  \eqref{sss} are  fundamental solutions to \eqref{APM}, in the sense that they take a Dirac mass as initial trace.

 \begin{lemma}\label{selfsimarefundsolut}
 If $U(x,t)=t^{-\alpha}F(t^{-a_1}x_1,..,t^{-a_N}x_N)$ is the self-similar function  defined in \eqref{sss}, where $a_i=\alpha \sigma_i$ for all $i=1,\cdots,N$, and also $\alpha$ and $\sigma_i$ satisfy \eqref{alfa} and \eqref{ai}, then it is  a fundamental solution of the Cauchy Problem \eqref{APM}-\eqref{IC} if \ $F\ge0$,  $F\in L^1(\ren)$ and it satisfies equation \eqref{StatEq}.
 \end{lemma}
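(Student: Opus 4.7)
The plan is to verify the two defining properties of a fundamental solution: (i) $U$ solves the anisotropic porous medium equation \eqref{APM} on $Q$ in the weak sense, and (ii) the initial trace satisfies $U(\cdot,t)\rightharpoonup M\delta_0$ as $t\to 0^+$, where $M=\int_{\R^N} F(y)\,dy$. Property (i) is essentially already in hand: by Proposition \ref{Lem1}, the exponents $\alpha,\sigma_i$ of \eqref{alfa}--\eqref{ai} are chosen precisely so that the ansatz \eqref{sss} satisfies \eqref{APM} if and only if $F$ satisfies the stationary equation \eqref{StatEq}. The nonnegativity and $L^1$-integrability of $F$ together with the standing hypotheses (H1)--(H2) ensure the computation has the requisite weak meaning. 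Mass conservation, $\int_{\R^N} U(x,t)\,dx = M$ for all $t>0$, is also a direct consequence of the change of variables in Proposition \ref{Lem1}.

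For property (ii), I would test $U(\cdot,t)$ against an arbitrary $\varphi\in C_b(\R^N)$ and apply the anisotropic change of variables $y_i = t^{-a_i}x_i$, whose Jacobian equals $\prod_{i=1}^N t^{a_i}=t^{\alpha}$. This gives the clean identity
\begin{equation*}
\int_{\R^N} U(x,t)\,\varphi(x)\,dx \;=\; \int_{\R^N} F(y)\,\varphi(t^{a_1}y_1,\ldots,t^{a_N}y_N)\,dy.
\end{equation*}
Under (H1) and (H2) each exponent $a_i=\alpha\sigma_i$ is strictly positive, so for every fixed $y\in\R^N$ the argument $(t^{a_1}y_1,\ldots,t^{a_N}y_N)\to 0$ as $t\to 0^+$. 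By continuity of $\varphi$ the integrand tends pointwise to $F(y)\varphi(0)$, and the pointwise bound $|F(y)\varphi(\cdot)|\le \|\varphi\|_\infty F(y)\in L^1(\R^N)$ permits Lebesgue dominated convergence, yielding
\begin{equation*}
\lim_{t\to 0^+}\int_{\R^N} U(x,t)\,\varphi(x)\,dx = \varphi(0)\int_{\R^N} F(y)\,dy = M\,\varphi(0).
\end{equation*}
This is exactly the weak-$\ast$ convergence $U(\cdot,t)\rightharpoonup M\delta_0$ in the space of finite Radon measures.

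The argument is essentially a change-of-variables plus dominated convergence manoeuvre, so there is no deep obstacle at the level of the statement. The subtlety worth flagging is conceptual rather than computational: since $F$ is only assumed nonnegative and in $L^1$, and the equation is degenerate at $\{F=0\}$, the verification of (i) must be read in the weak sense already employed in Proposition \ref{Lem1}; and one should make sure that the notion of \emph{fundamental solution} adopted in later sections is precisely \emph{initial trace equal to $M\delta_0$ in the distributional/measure sense}, which is exactly what the dominated-convergence step delivers. Uniqueness of the initial trace among nonnegative weak solutions is a separate question that is not asserted by the lemma.
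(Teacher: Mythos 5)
Your argument is correct and is exactly the standard verification that the paper has in mind (it states the lemma as an easy consequence of Proposition \ref{Lem1} without writing out details): the PDE part is delegated to Proposition \ref{Lem1}, and the initial trace $M\delta_0$ follows from the anisotropic change of variables $y_i=t^{-a_i}x_i$ with Jacobian $t^{\alpha}$ (using $\alpha=\sum_i a_i$ and $a_i=\alpha\sigma_i>0$ under (H2)) together with dominated convergence. No gaps; your closing remarks on the weak reading of the equation and on uniqueness being a separate issue are consistent with how the paper uses the lemma.
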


\subsection{Self-similar parabolic variables}

 It will be very useful to zoom the original solution according to the self-similar exponents \eqref{alfa}-\eqref{ai}. The change of variables is
 \begin{equation}\label{NewVariables}
v(y,\tau)=(t+t_0)^\alpha u(x,t),\quad \tau=\log (t+t_0),\quad y_i=x_i(t+t_0)^{-\sigma_i\alpha} \quad i=1,..,N,
\end{equation}
with $\alpha$ and $\sigma_i$ are defined in \eqref{alfa}-\eqref{ai}.
\begin{lemma}\label{Lem1}
If $u(x,t)$ is a solution (resp. supersolution, subsolution) of \eqref{APM}, then $v(y,\tau)$ is a solution (resp. supersolution, subsolution) of
\begin{equation}\label{APMs}
v_\tau=\sum_{i=1}^N\left[(v^{m_i})_{y_i y_i}+\alpha \sigma_i \left(\,y_i\,v\right)_{y_i}\right] \quad \text{ in \quad }\mathbb{R}^N\times(\tau_0,+\infty).
\end{equation}
\end{lemma}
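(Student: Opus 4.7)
The plan is a direct chain-rule computation, and the values of $\alpha$ and $\sigma_i$ pinned down in \eqref{alfa}--\eqref{ai} are precisely what make every power of the time variable collapse into a single common factor.

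Writing $s := t+t_0$, the transformation reads $\tau = \log s$, $y_i = x_i s^{-\sigma_i\alpha}$, and $u = s^{-\alpha} v$. From $\partial_t\tau = 1/s$, $\partial_t y_i = -\sigma_i\alpha\, y_i/s$, and $\partial_{x_i} = s^{-\sigma_i\alpha}\partial_{y_i}$, the chain rule gives
$$u_t = s^{-\alpha-1}\Bigl(v_\tau - \alpha v - \alpha\sum_{i=1}^N \sigma_i y_i v_{y_i}\Bigr), \qquad (u^{m_i})_{x_i x_i} = s^{-\alpha m_i - 2\sigma_i\alpha}\,(v^{m_i})_{y_i y_i}.$$

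Two algebraic identities close the argument. First, summing \eqref{ai} yields $\sum_{i=1}^N\sigma_i = 1$, which lets me fold the zeroth-order term into the drift: $\alpha v + \alpha\sum_i \sigma_i y_i v_{y_i} = \sum_i \alpha\sigma_i (y_i v)_{y_i}$. Second, for every $i$,
$$\alpha m_i + 2\sigma_i\alpha = \alpha\bigl(\overline{m} + 2/N\bigr) = \alpha+1,$$
where the first equality uses \eqref{ai} (so the exponent is automatically $i$-independent) and the second uses \eqref{alfa}. Thus both sides of \eqref{APM} share the positive factor $s^{-\alpha-1}$. Dividing it out turns \eqref{APM} into \eqref{APMs} and, because the factor is strictly positive and every step is a pointwise identity, the same manipulation maps supersolution and subsolution inequalities of \eqref{APM} to the corresponding inequalities for \eqref{APMs}.

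The only nontrivial point is the second identity: that the diffusion exponent $\alpha m_i + 2\sigma_i\alpha$ is independent of $i$ and coincides with the time exponent $\alpha+1$. This is the \emph{lucky fact} commented upon after \eqref{alfa} and is exactly what singles out the values of $\alpha$ and $\sigma_i$; with it in hand the proof is essentially bookkeeping.
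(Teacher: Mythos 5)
Your computation is correct: the chain rule yields exactly the stated factors, and the two identities $\sum_{i}\sigma_i=1$ and $\alpha(m_i+2\sigma_i)=\alpha+1$ (both immediate from \eqref{alfa}--\eqref{ai}) produce the common positive factor $s^{-\alpha-1}$, whose sign preserves the sub/supersolution inequalities since $\tau$ is increasing in $t$. This is precisely the standard argument the paper relies on (it defers the details to the fast diffusion paper \cite{FVV23}), so your proof matches the intended one.
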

Observe that the rescaled equation does not change with the time-shift $t_0$ (in general $t_0=0$ or $t_0=1$), but the initial value of the new time does, $\tau_0 = \log(t_0)$. In particular if $t_0 = 0$ then $\tau_0 =-\infty$ and the $v$ equation is defined for all $\tau\in \mathbb{R}$.

We stress that this change of variables preserves the $L^1$ norm:
 $$\int_{\mathbb{R}^N}v(y,\tau) \, dy=\int_{\mathbb{R}^N}u(x,t) \, dx\quad \text{ if }\tau=\log (t+t_0)\quad  \forall t\geq t_0.$$


\subsection{Scaling and mass change}\label{sec scaling}

Equation \eqref{APM} is invariant under the following scaling transformation preserving the mass of the solution
\begin{equation}\label{scal.tr}
\widehat u(x,t)=k^{\alpha}\,u(k^{a_1}x_1,\cdots,  k^{a_N}x_N ,kt), \quad k>0,
\end{equation}
assuming that $\alpha (m_i-1)+2a_i=1$ for all $i$ hold.

 It is important to notice that equation \eqref{APM} is also invariant under the scaling transformation that does not alter time:
\begin{equation}\label{scal.tr}
{\mathcal T}_k \,u(x,t)=k u(k^{-\nu_1}x_1,\cdots,  k^{-\nu_N}x_N ,t), \quad k>0,
\end{equation}
that changes solutions into solutions  if  \ $m_i -2\nu_i= 1$ for all $i$, hence $\nu_i=(m_i-1)/2>0$.  The corresponding scaling for the $v$ solution is
\begin{equation}\label{Change mass.v}
 {\mathcal T}_k v(y,\tau)=k v(k^{-\nu_i}y_i,\tau )\,.
\end{equation}
Therefore, the formula for the stationary solutions of \eqref{StatEq} is
\begin{equation}\label{Tk}
{\mathcal T}_k F(y)=F_k(y)=k F(k^{-\nu_i}y_i)\,.
\end{equation}
The stationary equation is invariant under this transformation  if $\nu_i=(m_i-1)/2>0$. Note that this changes the mass ({or the $L^1$ norm})
\begin{equation}\label{Change mass}
\int_{{\mathbb{R}^N}} F_k(y)dy= k\int_{{\mathbb{R}^N}} F(y_i\,k^{-\nu_i})\,dy=k^{\beta} \,\int_{{{\mathbb{R}^N}}} F(z)\,dz,
\end{equation}
with
\begin{equation}\label{beta}
\beta=1+\sum_i\nu_i=1+N(\overline {m}-1)/2=\frac{N}{2}(\overline {m}-m_c)>1.
\end{equation}
Hence, the new mass is $M_k=k^{\beta}M_1$. Same change of mass applies  to ${\mathcal T}_k \,u$  w.r.t. $u$.  This transformation will be used in the sequel to reduce the calculations with self-similar solutions to the case of unit mass.

We refer the reader to the fast diffusion analysis in \cite{FVV23} for proofs and more details related to all this section.

\section{Construction of solutions and main properties}\label{ssec.prelim2}

This section contains a review of technical results regarding the existence of solutions that were also used in \cite{FVV23}.

\subsection{Construction of solutions by approximation}\label{ssec.approx}

The construction is rather standard and it is similar to one presented in \cite{FVV23} for the fast diffusion regime, but we detail some steps that will be useful in what follows.

 Let $u_0\in L^1(\ren)\cap L^\infty(\ren)$ be the nonnegative initial datum. Recalling that $Q=\R^{N}\times (0,+\infty)$, we construct an $L^2$ \sl weak energy solution \rm  $u$,  in the sense that $u\in L^{2}(Q)$, $\frac{\partial}{\partial x_{i}}u^{m_{i}}\in L^{2}(Q)$ \, for all $i=1,\cdots,N$  and it satisfies
\begin{equation}\label{weak.ren}
\int_{0}^{\infty}\int_{\R^{N}}u\varphi_{t}\,dx\,dt=\sum_{i=1}^{N}\int_{0}^{\infty}\int_{\ren} (u^{m_{i}})_{x_{i}}\varphi_{x_{i}}dx\,dt
 -\int_{\ren}u_{0}(x)\varphi(x,0)dx,
\end{equation}
for all the test functions $\varphi\in C_{c}^{1}(\R^{N}\times [0,+\infty))$. These solutions enjoy the following \sl energy estimates\rm\,
\begin{equation}\label{Energywholespace bis}
\begin{split}
4 \sum_{j=1}^N\frac{m_im_j}{(m_i+m_j)^2}\int_0^T\int_{\ren}&\left|\frac{\partial}{\partial x_j}\left(u^{\frac{m_i+m_j}{2}}\right)\right|^2 \, dx\, dt
\\
\leq&\int_{\ren}\left[
\frac{1}{m_i+1}{u_0}^{m_i+1}
\right]\, dx
-
\int_{\ren}\left[
\frac{1}{m_i+1}{u}^{m_i+1}(x,T)
\right]\, dx
\end{split}
\end{equation}
for all $i=1,...,N$ and $T>0$. In particular we obtain
\begin{equation}\label{Energywholespace}
\int_0^T\int_{\ren}
\left|\frac{\partial}{\partial x_i}u^{m_i}\right|^2 \, dx\, dt
\leq\int_{\ren}\left[
\frac{1}{m_i+1}{u_0}^{m_i+1}
\right]\, dx
-
\int_{\ren}\left[
\frac{1}{m_i+1}{u}^{m_i+1}(x,T)
\right]\, dx
\end{equation}
for all $i=1,...,N$ and $T>0$.

\medskip

(i) \textit{Sequence of approximate Cauchy-Dirichlet problems in a ball.}
As a first step we consider a sequence of approximate Cauchy-Dirichlet problems in the ball $B_n(0):=\{x:|x|<n\}$ with an initial datum  ${u_0}_n\ge 0$, that is a suitable approximation of $u_0$ in $B_n(0)$. These Cauchy-Dirichlet problems are not uniformly parabolic at the level $u=0$ because the diffusion coefficients $m_iu^{m_i-1}$ go to zero when $u \to 0$. To overcome this difficulty we construct a sequence of approximate initial data $u_{0,n,\varepsilon}$ which do not take the value $u = 0$ by moving up the initial and boundary datum.
Then we consider the following sequence of approximate problems
\begin{equation}
\label{PCD appr}
\tag{$P_{n,\varepsilon}$}
\left\{
\begin{aligned}
    &(u_{n,\varepsilon})_t=
    \sum_{i=1}^N\left(a_\varepsilon^i(u_{n,\varepsilon})(u_{n,\varepsilon})_{x_i}\right)_{x_i} & \quad \text{in }Q_n, \\
    &u_{n,\varepsilon}(x,0)=u_{0,n,\varepsilon}(x) &\quad  \text{for }|x|\leq n, \\
   &u_{n,\varepsilon}(x,t)=\varepsilon &\quad  \text{for }|x|=n, \ t\geq0,
\end{aligned}
\right.
\end{equation}
where $\varepsilon>0$, $a_\varepsilon^i(z)=m_{i}z^{m_{i}-1}$ for $z\in[\varepsilon, \sup u_{0}+\varepsilon]$ and $u_{0,n,\varepsilon}=u_{0n}+\varepsilon$.

Since problem \eqref{PCD appr} is uniformly parabolic, we can apply  the standard quasilinear theory, see \cite{LSU}, to find a unique solution $u_{n,\varepsilon}(x,t)$, which is bounded from below by $\varepsilon>0$ in view of the Maximum Principle. Moreover, the solutions $u_{n,\varepsilon}$ in this step are $C^\infty(Q_n)$ by bootstrap arguments based on repeated differentiation and interior regularity results for parabolic equations. Using again the Maximum Principle we conclude that
 $\varepsilon\le u_{n,\varepsilon} \le \sup u_{0}+\varepsilon$.

In order to get energy estimates that are uniform in $\ve$ and $n$, we multiply the equation in \eqref{PCD appr} by $\eta_\varepsilon=u_{n,\varepsilon}^q-\varepsilon^{q}$ with $q=m_i$ for some $i$ (see \cite{FVV23} for more details).

\medskip

(ii)\textit{ Passage to the limit as $n\rightarrow\infty$}. We let the ball $B_n(0)$ expand into the whole space for fixed $\ve>0$. The family $\{u_{n,\varepsilon}: n\ge 1\}$ is uniformly bounded in $Q_n$ and also uniformly away from 0.
 We recall that each $u_{n,\varepsilon}$  is a non-negative solution of problem \eqref{PCD appr}. Since $u_{n,\varepsilon}(x,t)\leq u_{n+1,\varepsilon}(x,t)$ on the boundary of the cylinder $Q_n$,  applying the classical comparison principle we get \ $u_{n,\varepsilon}(x,t)\leq u_{n+1,\varepsilon}(x,t)\text{ \ in } Q_{n}$. Thus, we obtain the monotonicity of $u_{n,\varepsilon}$ in $n$ and we are able to pass to the limit as $n\rightarrow\infty$ and we can set (up to extending $u_{n,\varepsilon}(x,t)$ to $\ve$ out of $Q_{n}$)
 $$
u_\ve (x,t):=\lim_{n\to\infty} u_{n,\varepsilon}(x,t).
$$
and we can verify that $u_\ve (x,t)$ verify the Cauchy problem in $\mathbb{R}^N$ with initial datum $u_\varepsilon(x,0)=u_0(x)+\varepsilon$ (see \cite{FVV23} for more details). %

\medskip

(iv) \textit{ Passage to the limit as $\ve \to 0$}. We notice that the family $\{u_{\varepsilon}\}$ is monotone in $\ve$  by the construction of the initial data. Then we define the limit function
\begin{equation}\label{un}
u(x,t)=\lim_{\varepsilon\rightarrow0} u_\varepsilon(x,t)
\end{equation}
as a monotone limit in $ Q$ of bounded non-negative (smooth) functions. Moreover $u$ is an $L^2$ weak energy solution of \eqref{APM} with initial datum $u_0\in L^1(\mathbb{R}^N)\cap L^\infty(\mathbb{R}^N)$ and verifies the energies estimates \eqref{Energywholespace bis}.

\begin{remark}\label{Remark 1}
When $u_0\in L^1(\ren)\cap L^\infty(\ren)$, we may multiply the equation \eqref{PCD appr} by $\phi=u_{n,\varepsilon}^{p}$ with any $p>1$, and integrate by parts to obtain  following energy estimate
\begin{equation}\label{Energy}
\sum_{i=1}^N\int_0^T\int_{\ren}\left|\frac{\partial}{\partial x_i } u_{n,\varepsilon}^{\frac{p+m_j}{2}}\right|^2\,dx\,dt\leq \frac{1}{p+1}\int_{\ren}u_{0,n,\varepsilon,}^{p+1}\,dx.
\end{equation}
Passing to the limit, choosing a particular $i$ and selecting the a value $p=p_i>1$ we get for every $i$
\begin{equation}\label{Energywholespace}
\int_0^T\int_{\ren}\left|\frac{\partial}{\partial x_i } u^{\frac{p_i+m_i}{2}}\right|^2\,dx\,dt\leq C_i\int_{\ren}u_0^{p_i+1}\,dx.
\end{equation}
for a solution $u$ of equation \eqref{APM}.
\end{remark}

\subsection{Solutions with $L^1$ data}
\medskip
Now we recall some properties of solutions constructed in the previous subsection . For the proof we refer the reader to \cite{FVV23}.
Let us first state  the property the monotonicity of $L^p$ norm. Its proof is standard.  

\begin{proposition}\label{decay of the $L^p$}
Let $u$ be the constructed solution with $u_0\in L^{1}(\mathbb{R}^N)\cap L^\infty(\mathbb{R}^N)$. Then, $u(t)\in L^{p}(\mathbb{\R^{N}})$ for all $p\in[1,\infty]$ and
\begin{equation}
\|u(t)\|_{p}\leq \|u_{0}\|_{p}.\label{boundLpnormindata}
\end{equation}
Under conditions (H1) and (H2) we have equality for $p=1$.
\end{proposition}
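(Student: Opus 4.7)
The proposition contains two claims of different character: the monotonicity of every $L^p$ norm along the flow, and the stronger conservation of mass at $p=1$. My strategy is to obtain the inequality by working at the level of the smooth approximate problems $(P_{n,\varepsilon})$ of Section 3.1 and then passing to the monotone limits $n\to\infty$, $\varepsilon\to 0^+$, and to deduce the mass conservation directly on the limit solution by an anisotropic cutoff argument.

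For $p=\infty$ the bound $0\le u(x,t)\le\|u_0\|_\infty$ is immediate from the Maximum Principle already recorded for $(P_{n,\varepsilon})$, namely $\varepsilon\le u_{n,\varepsilon}\le\|u_0\|_\infty+\varepsilon$. For $1<p<\infty$ I would test the PDE in $(P_{n,\varepsilon})$ with $\varphi=(u_{n,\varepsilon}-\varepsilon)^{p-1}$, which is admissible because it is smooth and vanishes on $\partial B_n$, so no spatial boundary terms appear. Integration by parts in each coordinate direction yields
\begin{equation*}
\frac{1}{p}\frac{d}{dt}\int_{B_n}(u_{n,\varepsilon}-\varepsilon)^p\,dx=-(p-1)\sum_{i=1}^N\int_{B_n} m_i\,u_{n,\varepsilon}^{m_i-1}(u_{n,\varepsilon}-\varepsilon)^{p-2}\bigl|(u_{n,\varepsilon})_{x_i}\bigr|^2\,dx\le 0,
\end{equation*}
so $\int_{B_n}(u_{n,\varepsilon}(t)-\varepsilon)^p\,dx\le\int u_{0n}^p\,dx\le\|u_0\|_p^p$. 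The monotone limits $u_{n,\varepsilon}\uparrow u_\varepsilon$ and $u_\varepsilon\downarrow u$ combined with Fatou's lemma then deliver $\|u(t)\|_p\le\|u_0\|_p$. The borderline $p=1$ can be recovered either by letting $p\downarrow 1$ (using lower semicontinuity of the $L^p$ norm) or, equivalently, as a by-product of the cutoff argument of the next paragraph.

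For the equality $\int u(x,t)\,dx=\int u_0(x)\,dx$ I would argue directly on the limit solution using the weak formulation \eqref{weak.ren}. Choose $\zeta\in C_c^\infty(\R^N)$ with $\zeta\equiv 1$ on $\{|x|\le 1\}$ and $\zeta\equiv 0$ on $\{|x|\ge 2\}$, and set $\zeta_R(x):=\zeta(x/R)$. After a standard Steklov regularization in time that turns $\zeta_R(x)\mathbf{1}_{[0,t]}(s)$ into an admissible test function in \eqref{weak.ren}, one obtains
\begin{equation*}
\int_{\R^N}u(x,t)\,\zeta_R\,dx-\int_{\R^N}u_0(x)\,\zeta_R\,dx=\sum_{i=1}^N\int_0^t\int_{\R^N}u^{m_i}(x,s)\,(\zeta_R)_{x_ix_i}(x)\,dx\,ds.
\end{equation*}
The $L^\infty$ bound from the first step and the $L^1$ inequality give $u^{m_i}(s)\in L^1(\R^N)$ with $\|u^{m_i}(s)\|_1\le\|u_0\|_\infty^{m_i-1}\|u_0\|_1$, uniformly in $s\in[0,t]$; since $|(\zeta_R)_{x_ix_i}|\le CR^{-2}\mathbf{1}_{\{|x|\le 2R\}}$, the right-hand side is $O(R^{-2}t)$ and vanishes as $R\to\infty$, while monotone convergence on the left produces the claimed equality.

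The main obstacle, modest here, is the technical justification of the test-function computation in \eqref{weak.ren}: that formulation is stated for $\varphi\in C_c^1(\R^N\times[0,+\infty))$ and $\zeta_R(x)\mathbf{1}_{[0,t]}(s)$ is not of that class, but a Steklov average in time handles this in the standard way. No genuinely anisotropic difficulty arises beyond the $L^\infty$--$L^1$ control from the previous step; the slow-diffusion hypothesis (H1) enters only silently, through the existence and boundedness of the constructed solution underlying the whole argument, while (H2) is inherited as the standing assumption of the paper.
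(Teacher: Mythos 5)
Your proposal is correct and follows essentially the same standard route that the paper invokes (it only cites \cite{FVV23} and the computations of Remark \ref{Remark 1}): the $L^p$ bounds are obtained by testing the uniformly parabolic approximations \eqref{PCD appr} with powers of $u_{n,\varepsilon}-\varepsilon$ and passing to the monotone limits, and mass conservation follows from the weak formulation \eqref{weak.ren} with cutoffs, which in the slow-diffusion range is immediate since $\|u^{m_i}(s)\|_1\le\|u_0\|_\infty^{m_i-1}\|u_0\|_1$. The only point to polish is the case $1<p<2$, where $\varphi_{x_i}=(p-1)(u_{n,\varepsilon}-\varepsilon)^{p-2}(u_{n,\varepsilon})_{x_i}$ may degenerate where $u_{n,\varepsilon}=\varepsilon$; testing instead with $(u_{n,\varepsilon}-\varepsilon+\delta)^{p-1}-\delta^{p-1}$ and letting $\delta\to0$ (or simply letting $p\downarrow$ from $p\ge2$ together with interpolation) removes this harmless technicality.
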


The next result shows that the set of constructed solutions enjoys the property of $L^1$ contraction in time in the strong form proposed by B\'enilan \cite{Benilan72} as $T$-contraction, a property that implies comparison.

\begin{theorem}\label{contraction}
For every two constructed  solutions $u_1$ and $u_2$ to \eqref{APM} with respective initial data $u_{0,1}$ and $u_{0,2}$ in $L^{1}(\R^{N})\cap L^\infty(\R^{N})$ we have
\begin{equation}\label{L1_contr}
\int_{\R^{N}} (u_1(t)-u_2(t))_+\,dx\le \int_{\R^{N}} (u_{0,1}-u_{0,2})_+\,dx\,.
\end{equation}
In particular, if $u_{0,1}\le u_{0,2}$ for a.e.  $x$, then for every $t>0$ we have $u_1(t)\le u_2(t)$ for all $x\in \R^N$ and $t>0$.
\end{theorem}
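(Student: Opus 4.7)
The plan is to establish \eqref{L1_contr} first at the level of the uniformly parabolic approximations $(P_{n,\ve})$, where the solutions $u_{j,n,\ve}$ (for $j=1,2$) are classical, bounded and bounded away from zero, so that pointwise manipulations are legitimate; the full statement will then follow by passing to the limit $n\to\infty$ and $\ve\to 0$ via the monotone convergence recalled in Subsection \ref{ssec.approx}. The starting point is to write a clean divergence-form equation for the difference. Setting $w:=u_{1,n,\ve}-u_{2,n,\ve}$, the strict monotonicity of each nonlinearity $\Phi_i(s):=s^{m_i}$ on the range $[\ve,\|u_0\|_\infty+\ve]$, together with the mean-value identity, provides a smooth, uniformly positive coefficient
\[
A_i(x,t):=\int_{0}^{1}\Phi_i'\bigl(u_{2,n,\ve}+\sigma w\bigr)\,d\sigma\ge m_i\,\ve^{m_i-1}>0
\]
such that $u_{1,n,\ve}^{m_i}-u_{2,n,\ve}^{m_i}=A_i\,w$. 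Consequently $w$ solves the linear divergence-form parabolic problem
\[
w_t=\sum_{i=1}^{N}\bigl[A_i\,w_{x_i}+(A_i)_{x_i}\,w\bigr]_{x_i}\quad\text{in }B_n\times(0,T),\qquad w|_{\partial B_n}=0,
\]
since both $u_{j,n,\ve}$ equal $\ve$ on the lateral boundary.

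The core step is then a Kato-type argument. I would pick a smooth nondecreasing approximation $\zeta_\delta:\R\to[0,1]$ of $\mathrm{sign}_+$ with $\zeta_\delta\equiv 0$ on $(-\infty,0]$ and $\zeta_\delta\equiv 1$ on $[\delta,\infty)$, set $Z_\delta(s):=\int_0^{s}\zeta_\delta(\tau)\,d\tau$, multiply the difference equation by $\zeta_\delta(w)$, and integrate over $B_n$. The Dirichlet condition kills the boundary contributions, and one integration by parts yields
\[
\frac{d}{dt}\int_{B_n}Z_\delta(w)\,dx=-\sum_i\int_{B_n}A_i\,\zeta_\delta'(w)\,w_{x_i}^{2}\,dx-\sum_i\int_{B_n}(A_i)_{x_i}\,w\,\zeta_\delta'(w)\,w_{x_i}\,dx.
\]
The first sum on the right is non-positive and may be discarded. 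For the mixed term I rewrite $w\,\zeta_\delta'(w)\,w_{x_i}=(\eta_\delta(w))_{x_i}$ with $\eta_\delta(s):=\int_0^{s}r\,\zeta_\delta'(r)\,dr$, and integrate by parts once more to convert it into $\sum_i\int(A_i)_{x_i x_i}\,\eta_\delta(w)\,dx$. Since $|\eta_\delta|\le\delta$ and $(A_i)_{x_i x_i}$ is bounded on compact subsets of $Q_n$ (the $u_{j,n,\ve}$ are $C^\infty$ by the classical quasilinear theory), this contribution is $O(\delta)$. Letting $\delta\to 0$ and using $Z_\delta(w)\uparrow w_+$, I obtain the approximate contraction
\[
\int_{B_n}\bigl(u_{1,n,\ve}(t)-u_{2,n,\ve}(t)\bigr)_+\,dx\le\int_{B_n}\bigl(u_{0,1,n,\ve}-u_{0,2,n,\ve}\bigr)_+\,dx.
\]

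Finally, monotone convergence as $n\to\infty$ and then $\ve\to 0$, together with the $L^1$ convergence $\|u_{0,j,n,\ve}-u_{0,j}\|_{L^1}\to 0$ of the regularized initial data, transfers the inequality to the limit pair $(u_1,u_2)$ and produces \eqref{L1_contr}. The pointwise comparison when $u_{0,1}\le u_{0,2}$ follows at once because the right-hand side of \eqref{L1_contr} then vanishes. I expect the delicate point to be the handling of the mixed term produced by the anisotropy: unlike the isotropic case, where $\zeta_\delta$ applied directly to $V=u_1^{m}-u_2^{m}$ absorbs the diffusion into a single nonnegative contribution, here each coordinate carries its own coefficient $A_i$, and closing the Stampacchia-type vanishing $\eta_\delta(w)\to 0$ rests essentially on the uniform $L^\infty$ bound of $(A_i)_{x_i x_i}$ provided by the non-degenerate regularization in $(P_{n,\ve})$.
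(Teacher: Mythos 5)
Your proposal is correct in substance, but it is worth pointing out that the paper itself does not prove Theorem \ref{contraction}: it only refers to the fast diffusion paper \cite{FVV23}, where the scheme is the one you follow (establish the $T$-contraction for the nondegenerate approximations \eqref{PCD appr} and transfer it to the limit through the monotone construction). What you add is a concrete way of closing the genuinely anisotropic difficulty: since each direction carries its own nonlinearity, the isotropic trick of testing with a smoothed sign of $u_1^{m}-u_2^{m}$ is unavailable, and your device --- writing $u_1^{m_i}-u_2^{m_i}=A_i w$ with $A_i\ge m_i\ve^{m_i-1}$, testing with $\zeta_\delta(w)$, and integrating by parts a second time so that the cross term becomes $\sum_i\int (A_i)_{x_ix_i}\,\eta_\delta(w)\,dx$ with $|\eta_\delta|\le\delta$ --- is a legitimate Kato-type argument. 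It works precisely because the constants it produces, although they depend badly on $n$ and $\ve$, are multiplied by $\delta$ and disappear before the limits in $n$ and $\ve$ are taken; the $\ve$-liftings also cancel in the initial data, so the right-hand side passes to the limit, while Fatou's lemma handles the left-hand side.

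Two points in your write-up need sharpening, though neither is fatal. First, the second integration by parts is carried out over all of $B_n$, so you need $(A_i)_{x_ix_i}$ bounded (or at least in $L^1$) up to the lateral boundary $\partial B_n$, not merely ``on compact subsets of $Q_n$'' as you state; this does hold because the approximate problem is uniformly parabolic with smooth constant lateral datum $\ve$, so classical regularity (LSU) gives smoothness up to $\partial B_n$ for positive times, but possibly not near the corner $t=0$ where the initial datum is only bounded. The clean fix is to integrate the differential inequality on $[t_1,t]$ with $t_1>0$, send $\delta\to 0$ to get $\int_{B_n}(w(t))_+\le\int_{B_n}(w(t_1))_+$, and then let $t_1\to 0$ using the continuity of $u_{j,n,\ve}$ in $L^1(B_n)$ down to $t=0$. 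Second, the final assertion $u_1(t)\le u_2(t)$ ``for all $x$'' requires, beyond the vanishing of the right-hand side of \eqref{L1_contr}, the continuity of the constructed solutions for $t>0$ to upgrade the a.e.\ inequality to a pointwise one; this is available from Theorem \ref{EUWES} and should be invoked explicitly.
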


\begin{lemma}\label{comparisboundedunbound} Suppose that $u_1$ and $u_2$ are classical  nonnegative solutions to \eqref{APM} defined in a bounded {or unbounded} spatial domain $\Omega$, with smooth boundary, living for a time interval $0\le t\le T$  and suppose that $u_{1}\le u_{2}$ on $\partial   \Omega$. Then the contraction result holds. If  we have \, $\int_{\Omega} (u_{0,1}-u_{0,2})_+\,dx=\infty$ there is no assertion.
\end{lemma}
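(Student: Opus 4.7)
The plan is to imitate the B\'enilan $T$--contraction argument underlying Theorem \ref{contraction}, carried out directly on the classical difference $w:=u_1-u_2$ in the domain $\Omega$, with the boundary hypothesis $u_1\le u_2$ on $\partial\Omega$ playing the role of the decay-at-infinity used for the Cauchy problem in $\ren$. Subtracting the two copies of \eqref{APM} one obtains
\begin{equation*}
w_t=\sum_{i=1}^N\bigl(u_1^{m_i}-u_2^{m_i}\bigr)_{x_ix_i}\qquad\text{in }\Omega\times(0,T],
\end{equation*}
together with the factorisation $u_1^{m_i}-u_2^{m_i}=a_i(x,t)\,w$ where $a_i(x,t):=m_i\int_0^1(u_2+\theta w)^{m_i-1}\,d\theta\ge 0$. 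The monotonicity of $s\mapsto s^{m_i}$ ensures that $u_1^{m_i}-u_2^{m_i}$ has the same sign as $w$.

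Next I pick a smooth, non-decreasing approximation $p_\ve$ of $\operatorname{sgn}_+$ with $p_\ve(s)=0$ for $s\le 0$ and $p_\ve(s)=1$ for $s\ge\ve$, and set $P_\ve(s):=\int_0^s p_\ve$, $Q_\ve(s):=\int_0^s\sigma\,p_\ve'(\sigma)\,d\sigma=s\,p_\ve(s)-P_\ve(s)$. Both $P_\ve$ and $Q_\ve$ vanish on $\{s\le 0\}$, and $P_\ve(s)\to s_+$, $Q_\ve(s)\to 0$ pointwise as $\ve\to 0^+$. Multiplying the $w$--equation by $p_\ve(w)$ (times a standard cut-off $\xi_R(x)$ if $\Omega$ is unbounded) and integrating by parts once in each $x_i$, the boundary integral on $\partial\Omega$ vanishes because $w\le 0$ there forces $p_\ve(w)=0$. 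Splitting $(u_1^{m_i}-u_2^{m_i})_{x_i}=a_i w_{x_i}+(a_i)_{x_i}w$ and using the identity $p_\ve'(w)\,w\,w_{x_i}=\partial_{x_i}Q_\ve(w)$, a second integration by parts on the cross term yields
\begin{equation*}
\frac{d}{dt}\int_\Omega P_\ve(w)\,dx=-\sum_{i=1}^N\int_\Omega p_\ve'(w)\,a_i\,w_{x_i}^{2}\,dx+\sum_{i=1}^N\int_\Omega (a_i)_{x_ix_i}\,Q_\ve(w)\,dx,
\end{equation*}
with no residual boundary contribution since $Q_\ve(w)=0$ on $\partial\Omega$. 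The first sum is $\le 0$, while the second tends to zero as $\ve\to 0^+$ by dominated convergence, because $|Q_\ve(w)|\le w_+$ and $Q_\ve(w)\to 0$ pointwise. Sending $\ve\to 0^+$ (and $R\to\infty$ if $\Omega$ is unbounded) gives $\frac{d}{dt}\int_\Omega w_+\,dx\le 0$, and an integration in $t$ delivers \eqref{L1_contr}.

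The genuine obstacle is the cross term $(a_i)_{x_i}w\,w_{x_i}$: with different anisotropic powers $m_i$, the product $(u_1^{m_i}-u_2^{m_i})_{x_i}\,w_{x_i}$ is not pointwise sign-definite, so a naive energy estimate on $w_+$ fails. The remedy is the algebraic identity $Q_\ve(s)=sp_\ve(s)-P_\ve(s)$, which relocates the offending quantity to $(a_i)_{x_ix_i}\,Q_\ve(w)$ and makes it asymptotically negligible. The envelope needed for Lebesgue's theorem is supplied by the classical regularity of $u_1,u_2$ on $\overline{\Omega}\times[0,T]$, since $Q_\ve(w)$ is supported on $\{w>0\}\subset\{u_1>0\}$, where the equation is non-degenerate and the $(a_i)_{x_ix_i}$ are locally integrable. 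In the unbounded case the additional cut-off pieces $\sum_i\int_\Omega(u_1^{m_i}-u_2^{m_i})_{x_i}\,p_\ve(w)\,(\xi_R)_{x_i}\,dx$ vanish as $R\to\infty$ by the energy estimate \eqref{Energywholespace bis} combined with the hypothesis $\int_\Omega(u_{0,1}-u_{0,2})_+\,dx<\infty$; if this last quantity is infinite the argument gives no information, consistently with the final clause of the lemma.
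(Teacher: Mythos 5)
Your strategy (a B\'enilan--Kato $T$-contraction inequality in which the anisotropic cross term $(a_i)_{x_i}w\,w_{x_i}$ is rewritten through $Q_\ve$ and unloaded onto $(a_i)_{x_ix_i}$) is a reasonable route, and for smooth solutions bounded away from zero it works. Note that the paper gives no proof of this lemma at all: it is recalled from \cite{FVV23}, where the contraction is established at the level of the strictly positive, uniformly parabolic approximations (so that the coefficients $a_i$ are smooth and bounded below) and then transferred to the limit solutions. You instead attack possibly degenerate nonnegative solutions directly, and that is exactly where your argument has a hole.

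The gap is the assertion that $(a_i)_{x_ix_i}$ is locally integrable on $\{w>0\}$ because ``the equation is non-degenerate'' there. On $\{w>0\}$ you only know $u_1>0$: the function $u_2$ may vanish, and the closure of $\{w>0\}$ may touch $\{u_1=u_2=0\}$, i.e.\ the free boundaries that are the very feature of the slow-diffusion range. Since (H1) only requires $m_i>1$, exponents $m_i\in(1,2)$ are allowed, and then $a_i=m_i\int_0^1\bigl(\theta u_1+(1-\theta)u_2\bigr)^{m_i-1}d\theta$ need not have bounded (or even existing) second derivatives near the degeneracy set: even for $u_1,u_2\in C^{1,1}$ one only gets a pointwise bound of order $u_1^{\,m_i-2}$ for $(a_i)_{x_ix_i}$ on $\{w>0\}$, which blows up as $u_1\to0^+$, and its integrability near the contact with $\partial\{u_1>0\}$ is not guaranteed by the hypotheses (moreover, genuinely compactly supported ``classical'' solutions are not $C^2$ across their interfaces, so $(a_i)_{x_i}$ may not even exist there). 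Both your second integration by parts and the dominated-convergence step rest on this unproved integrability, so the proof fails precisely at the anisotropic cross term it was designed to control; the standard repair is the one in \cite{FVV23}, namely to run the computation for the $\ve$-lifted smooth approximations and pass to the limit. A second, lesser, weak point is the unbounded case: you dismiss the cut-off terms by invoking \eqref{Energywholespace bis}, but that estimate holds for the constructed Cauchy-problem solutions in $\ren$, not for arbitrary classical solutions in an arbitrary unbounded domain --- in particular it does not cover the travelling-wave barrier in a half-space to which the lemma is applied in Section \ref{sec.exfs} --- so some explicit control of the fluxes $(u_1^{m_i}-u_2^{m_i})_{x_i}$ at infinity (or a hypothesis ensuring it) is still needed there.
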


Finally we state the $L^1$ to $L^\infty$ smoothing effect.

\begin{theorem}\label{L1LI} If $u_0\in L^1(\mathbb{R}^N)\cap L^\infty(\mathbb{R}^N)$, then the constructed solution $u$ to \eqref{APM}-\eqref{IC} under assumptions (H1) and (H2) satisfies
 \begin{equation}\label{Linfty-L1}
 \|u(t)\|_\infty\leq C t^{-\alpha}\|u_0\|_1^{2\alpha/N}\quad \forall t>0,
 \end{equation}
where the exponent $\alpha$ is defined in \eqref{alfa} and $C=C(N,m_1,...,m_N)$.
 \end{theorem}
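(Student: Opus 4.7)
The plan is to combine the scaling invariance from Section~\ref{sec scaling} with a Moser--Nash iteration based on the family of energy estimates \eqref{Energywholespace} collected in Remark~\ref{Remark 1}. A direct computation shows that the target exponent satisfies $2\alpha/N=2/(N(\overline m-1)+2)=1/\beta$ with $\beta$ as in \eqref{beta}, so the claimed dependence on $\|u_0\|_1$ is precisely the one dictated by the mass-changing transformation $\mathcal T_k$ from \eqref{Tk}. First I would use that scaling to reduce to unit mass: given a solution $u$ with $M:=\|u_0\|_1$, the choice $k=M^{-1/\beta}$ makes $v:=\mathcal T_k u$ into a solution with $\|v(\cdot,0)\|_1=1$ and $\|v(\cdot,t)\|_\infty=k\,\|u(\cdot,t)\|_\infty$. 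Hence it suffices to prove the unit-mass smoothing bound $\|v(\cdot,t)\|_\infty\le C\,t^{-\alpha}$, since undoing the scaling then yields \eqref{Linfty-L1}.

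For that unit-mass estimate I would work with the smooth positive approximations $u_{n,\varepsilon}$ built in Section~\ref{ssec.approx}, so all formal manipulations are legal. Testing the equation in \eqref{PCD appr} against $u_{n,\varepsilon}^{p}$ and integrating by parts gives a differential inequality
$$\frac{d}{dt}\int_{\mathbb{R}^N} u_{n,\varepsilon}^{p+1}\,dx + c_p\sum_{i=1}^N\int_{\mathbb{R}^N}\left|\partial_i u_{n,\varepsilon}^{(p+m_i)/2}\right|^{2}\,dx\le 0.$$
Coupled with an anisotropic Gagliardo--Nirenberg--Nash inequality that controls $\|u\|_{p+1}$ by a weighted product of the Dirichlet-type quantities $\|\partial_i u^{(p+m_i)/2}\|_{2}$ and of the conserved $L^{1}$ mass, this yields a closed ODE of the form $\frac{d}{dt}\|u\|_{p+1}^{p+1}\le -C_p\,\|u\|_{p+1}^{(p+1)(1+\kappa_p)}$, whose integration produces a polynomial-in-time $L^{p+1}$ bound. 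Running Moser's iteration along a geometric sequence $p_k\to\infty$ and tracking the constants carefully so that the time exponent converges to $\alpha$ (this is the point where \eqref{alfa} is forced) completes the $L^\infty$ bound, and then the limits $n\to\infty$, $\varepsilon\to 0$ of Section~\ref{ssec.approx} transfer it to the constructed solution $v$.

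The technical core, and the main obstacle, is the anisotropic functional inequality feeding each iteration step: the derivatives controlled by the energy estimate act on \emph{different} functions $u^{(p+m_i)/2}$, one for each coordinate direction, rather than on a single profile as in the classical Troisi or Nash inequalities. I would bridge this either by splitting $u^{(p+m_i)/2}=u^{(p+\overline m)/2}\cdot u^{(m_i-\overline m)/2}$ and absorbing the excess factors via H\"older in each direction, or by exploiting that $(m_i-\overline m)/2$ becomes a lower-order perturbation as $p$ grows, running the iteration against the common function $u^{(p+\overline m)/2}$ with a controllable error. Hypotheses (H1)--(H2) enter precisely at this point: (H1) guarantees a genuine gain in integrability at each step, while (H2) keeps the anisotropic Sobolev exponent in the admissible range so that the geometric progression closes and the limiting decay exponent is exactly the self-similar $\alpha$.
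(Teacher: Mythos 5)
The paper does not actually prove Theorem \ref{L1LI} here: it defers entirely to the argument in the Appendix of \cite{FVV23}, which is indeed a scaling-plus-iteration argument in the spirit you describe. Your preliminary reductions are sound: the identity $2\alpha/N=1/\beta$ is correct, the mass-normalization via $\mathcal T_k$ with $k=M^{-1/\beta}$ is exactly the right use of Subsection \ref{sec scaling}, and testing the approximate problems with powers of $u_{n,\varepsilon}$ (more precisely with $u_{n,\varepsilon}^p-\varepsilon^p$, as in Remark \ref{Remark 1}, so that the lateral boundary terms on $\partial B_n$ vanish -- your choice $u_{n,\varepsilon}^p$ does not vanish there) gives the stated differential inequality.

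The genuine gap is the step you yourself flag as ``the technical core'': the anisotropic Nash/Gagliardo--Nirenberg inequality that converts control of $\sum_i\|\partial_i u^{(p+m_i)/2}\|_2^2$ and of the conserved mass into a gain of integrability for $u$. This is the whole content of the smoothing effect, and neither of your two bridging devices is carried out or obviously workable. Splitting $u^{(p+m_i)/2}=u^{(p+\overline m)/2}\,u^{(m_i-\overline m)/2}$ and ``absorbing by H\"older'' is problematic in both directions: when $m_i>\overline m$ the excess factor carries a positive power of $u$ whose integrability is precisely what the iteration is trying to establish (circular), and when $m_i<\overline m$ it carries a negative power of $u$, which is not even locally integrable near the free boundary in the slow-diffusion range $m_i>1$ where $u$ vanishes on open sets. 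The alternative remark that $(m_i-\overline m)/2$ is ``a lower-order perturbation as $p$ grows'' is a heuristic, not an estimate: the per-direction mismatch does not disappear in the iteration, and handling it (with the exact role of (H2), i.e.\ $\sigma_i>0$, in keeping the anisotropic exponents admissible) is exactly what the cited Appendix of \cite{FVV23} does via a Troisi-type anisotropic Sobolev inequality with carefully chosen direction-dependent exponents. Until that inequality is stated and proved, and the constants in the Moser loop are tracked to produce the exponent $\alpha$, your argument is a plausible plan that mirrors the cited proof rather than a proof.
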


The proof is a very important fact. It follows from the argument in the Appendix of \cite{FVV23} for the fast diffusion case.

The existence of solutions for datum $u_0\in L^1(\mathbb{R}^N)$ is based on idea to approximate the initial
data by a sequence of bounded integrable functions and then pass to the limit in
the approximate problems. The techniques are rather classical and the key tools needed to pass to the limit are the $L^1$- contraction property and the smoothing effect.

\bigskip
The following Theorem summarizes the existence, uniqueness and all the properties satisfied by the constructed solution with $L^{1}$ data:

\begin{theorem}\label{EUWES} Let the exponents $m_i$ satisfy assumptions (H1) and (H2). Then, for any nonnegative $u_0 \in L^1(\mathbb{R}^N)$ {there is a unique function $u\in C([0,\infty): L^1(\mathbb{R}^N))$ such that $u, u^{m_{i}}\in L^{1}_{loc}(Q)$ for all $i=1,...,N$, and equation \eqref{APM} holds in the distributional  sense in $Q=\mathbb{R}^N\times(0,+\infty)$, with the following additional properties:}

1)  $u(x,t) $ is a uniformly bounded function for each $\tau>0$ and \eqref{Linfty-L1} holds.

2)  Let $Q_\tau=\ren\times (\tau,\infty)$. We have $\partial_i u^{m_{i}}\in L^{2}(Q_\tau)$ for every $i$ and the energy estimates \eqref{Energywholespace} are satisfied.
 Equation \eqref{APM} holds in the weak sense of  \eqref{weak.ren} applied in $Q_\tau$ for every $\tau>0$.

3) Consequently, the maps $S_t: u_0\mapsto u(\cdot,t)$ generate a semigroup of $L^1$ ordered contractions in $L^1_+(\ren)$. The $L^1$-contraction estimates \eqref{L1_contr} are satisfied. The maximum principle applies.

4) Conservation of mass holds: for all $t>0$ we have \ $\int u(x,t)\,dx=\int u_0(x)\,dx$.

5) If we start with initial data $u_0\in L^1(\ren)\cap L^\infty(\ren)$ we may also conclude item 2) with $\tau=0$ and $u(x,t)$ is uniformly bounded and continuous in space and time.
 \end{theorem}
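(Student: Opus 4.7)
The plan is to construct $u$ as a monotone $L^1$-limit of the bounded-data solutions of Section 3.1, and then transfer every property via $L^1$-contraction, the smoothing estimate \eqref{Linfty-L1}, and weak compactness in the energy estimates.

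\textbf{Approximation and existence.} I would truncate the datum by $u_{0,n}:=\min(u_0,n)\in L^1(\ren)\cap L^\infty(\ren)$, so that $u_{0,n}\uparrow u_0$ in $L^1(\ren)$. Let $u_n$ be the constructed solution of \eqref{APM} with datum $u_{0,n}$, whose properties are supplied by Subsection 3.1 and Theorems \ref{contraction}--\ref{L1LI}. The $T$-contraction \eqref{L1_contr} gives
\begin{equation*}
\|u_n(t)-u_k(t)\|_{L^1(\ren)}\le \|u_{0,n}-u_{0,k}\|_{L^1(\ren)},
\end{equation*}
uniformly in $t\ge 0$, so $\{u_n(\cdot,t)\}$ is Cauchy in $L^1(\ren)$ uniformly in time. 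Because $u_{0,n}\le u_{0,n+1}$, the comparison in Theorem \ref{contraction} yields $u_n\le u_{n+1}$, so the pointwise limit $u(x,t):=\lim_n u_n(x,t)$ is monotone and defines an element of $C([0,\infty):L^1(\ren))$. Passing to the limit in $\|u_n(t)\|_1=\|u_{0,n}\|_1$ (the $p=1$ equality in Proposition 3.1) gives mass conservation, item~4).

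\textbf{Bounds, energy and the equation.} The smoothing effect \eqref{Linfty-L1} applied to $u_n$ reads
\begin{equation*}
\|u_n(t)\|_\infty\le C\,t^{-\alpha}\,\|u_{0,n}\|_1^{2\alpha/N}\le C\,t^{-\alpha}\,\|u_0\|_1^{2\alpha/N},
\end{equation*}
uniformly in $n$, which passes to the limit to give item~1). Fix $\tau>0$; by item~1) and mass conservation, the right-hand side of \eqref{Energywholespace} applied on $Q_\tau$ with $u_n(\cdot,\tau)$ as initial datum is bounded uniformly in $n$, so $\partial_{x_i}u_n^{m_i}$ is bounded in $L^2(Q_\tau)$. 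Weak compactness and the monotone pointwise convergence $u_n^{m_i}\to u^{m_i}$ (which is in $L^p_{\rm loc}$ for all $p<\infty$ by the $L^\infty$ bound) identify the weak limit as $\partial_{x_i}u^{m_i}$ and transfer the energy inequality to $u$, giving item~2). The distributional form of \eqref{APM} passes to the limit on each $Q_\tau$: the time term converges by dominated convergence using the uniform $L^\infty$ bound for $t\ge\tau$, and the spatial term by weak-strong pairing. Hence $u, u^{m_i}\in L^1_{\mathrm{loc}}(Q)$ and \eqref{APM} holds in $\mathcal{D}'(Q)$ by arbitrariness of $\tau$. Item~3) (the $L^1$-contraction and ordered semigroup property) is inherited directly from \eqref{L1_contr} for $u_n$ by $L^1$-convergence. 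Item~5) is automatic: if $u_0\in L^1\cap L^\infty$, the sequence $u_{0,n}$ stabilises and $u$ coincides with the solution built in Subsection 3.1, which is bounded and continuous up to $t=0$.

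\textbf{Uniqueness.} Suppose $\tilde u$ is another element of the same class, generating a semigroup $\tilde S_t$. Both $S_t$ and $\tilde S_t$ are $L^1$-contractions by item~3) and agree on $L^1\cap L^\infty$ data by item~5) together with the uniqueness result inside Subsection 3.1. Hence, with $u_{0,n}$ as above,
\begin{equation*}
\|u(t)-\tilde u(t)\|_1\le \|S_t u_0-S_t u_{0,n}\|_1+\|\tilde S_t u_{0,n}-\tilde S_t u_0\|_1\le 2\|u_0-u_{0,n}\|_1\longrightarrow 0,
\end{equation*}
which forces $u=\tilde u$.

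\textbf{Main obstacle.} The genuine technical points I expect to spend effort on are the rigorous passage to the weak formulation \eqref{weak.ren} in the limit (ensuring the test-function argument is uniform as $\tau\to 0^+$ and handling the inner integrand $u^{m_i}_{x_i}\varphi_{x_i}$ without extra regularity at $t=0$) and the time-shifted version of the energy estimate needed for item~2), which is exactly the place where the smoothing effect \eqref{Linfty-L1} is decisive in turning pointwise $L^\infty$ control into an $L^{m_i+1}$ bound on $u_n(\cdot,\tau)$ uniformly in $n$.
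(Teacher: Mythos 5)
Your existence construction (truncation $u_{0,n}=\min(u_0,n)$, uniform-in-time Cauchy property from the $T$-contraction \eqref{L1_contr}, monotone limit, transfer of \eqref{Linfty-L1}, of the energy bounds restarted at time $\tau$ via the smoothing effect, and of the distributional formulation) is exactly the route the paper indicates: approximation by bounded integrable data with the $L^1$-contraction and the smoothing effect as the two tools for passing to the limit. That part of your proposal is in order, modulo the routine verifications you already flag.

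The genuine gap is in your uniqueness paragraph. You take an arbitrary competitor $\tilde u$ in the class of the theorem and immediately speak of "the semigroup $\tilde S_t$ it generates'' which is an $L^1$-contraction and "agrees on $L^1\cap L^\infty$ data by item 5) together with the uniqueness result inside Subsection 3.1''. Neither ingredient is available: a single function $\tilde u$ solving \eqref{APM} distributionally with datum $u_0$ does not come equipped with a semigroup, and Theorem \ref{contraction} (hence item 3)) is stated and proved only for \emph{constructed} solutions, so it cannot be invoked for $\tilde u$. Moreover Subsection 3.1 contains no uniqueness statement at all --- it is an existence construction --- so "agreement on bounded data'' is precisely the assertion that needs proof, and your argument becomes circular. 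To close the gap one must prove uniqueness of weak (energy) solutions directly, e.g.\ by the classical duality argument: for two solutions $u,\tilde u$ with the same datum one tests the difference of the weak formulations \eqref{weak.ren} against a suitable solution of a backward dual problem (Ole\u{\i}nik's method), using the $L^2$ bounds on $\partial_i u^{m_i}$ from \eqref{Energywholespace} to justify the manipulations; this yields the $L^1$-comparison for arbitrary solutions in the class, first for bounded data and then for $L^1$ data by the approximation you already set up. This is the step the paper delegates to the detailed treatment in the fast-diffusion companion paper, and it is missing from your proposal as written.
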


 \noindent {\bf Monotonicity and SSNI property}

Finally we recall a very useful monotonicity property of the solutions. We say that a function $g:\mathbb{R}^N\rightarrow\mathbb{R}$ is \ SSNI if it is a  symmetric function in each variable $x_i$ and a nonincreasing function in $|x_i|$ for all $i$,\textit{ i.e.}
\begin{equation}\label{simm}
 g(x_1,\cdots,x_N)=g(|x_1|,\cdots,|x_N|) \quad \forall x \in \mathbb{R}^N,
\end{equation}
and for all $j=1,\cdots,N$
\begin{equation}\label{mon}
g(|x_1|,\cdots,|x_j| ,\cdots,|x_N|)\leq g(|x_1|,\cdots,|\widehat{x}_j| ,\cdots,|x_N|) \quad \text{ if }|\widehat{x}_j|\leq |x_j|.
\end{equation}
We say that the evolution function $u(x,t)$ is  \ SSNI if it is an \ SSNI function with respect to the space variable for all $t>0$.
The next result states the conservation in time of the SSNI property.

\begin{proposition}\label{Prop 3}
Let $u$ be a  nonnegative solution of the Cauchy problem for \eqref{APM} with nonnegative
initial data $u_0\in L^1(\mathbb{R}^N)$. If $u_0$ is a  symmetric function in each variable $x_i$,  and also a  nonincreasing function in $|x_i|$ for all $i$, then $u(x,t)$  is also symmetric and a nonincreasing function in $|x_i|$ for all $i$ for all fixed $t>0$.
\end{proposition}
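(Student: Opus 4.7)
My plan is to prove both parts of the SSNI-preservation via the anisotropic analogue of Aleksandrov's reflection principle. The essential observation is that equation \eqref{APM} is invariant under the hyperplane reflection $R_h^{(i)}:x_i\mapsto 2h-x_i$ (other coordinates fixed) for any $h\in\mathbb{R}$ and any index $i=1,\dots,N$: each term $(u^{m_j})_{x_jx_j}$ is a pure second-order operator in a single variable, so the reflection of a solution is again a solution. Hence, given $u$ with initial datum $u_0$, the composition $\tilde u(x,t):=u(R_h^{(i)}x,t)$ solves \eqref{APM} with datum $\tilde u_0(x)=u_0(R_h^{(i)}x)$.

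The symmetry part is immediate: taking $h=0$, the SSNI hypothesis gives $\tilde u_0=u_0$, and the uniqueness statement of Theorem \ref{EUWES} (or equivalently Theorem \ref{contraction} applied to $u-\tilde u$) yields $\tilde u\equiv u$, proving symmetry in each $x_i$.

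For the monotonicity part I fix $i$ and $h>0$, and work on the half-space $\Pi_h^+:=\{x_i>h\}$. Since $x_i>h>0$ implies $|2h-x_i|<x_i=|x_i|$, the SSNI hypothesis on $u_0$ gives $\tilde u_0\ge u_0$ on $\Pi_h^+$, and $\tilde u=u$ on the lateral boundary $\{x_i=h\}$ because $R_h^{(i)}$ fixes that hyperplane. The core step is to apply a parabolic comparison on $\Pi_h^+\times(0,\infty)$ to the two solutions $u$ and $\tilde u$ of \eqref{APM}, obtaining $u(x,t)\le u(R_h^{(i)}x,t)$ for every $x\in\Pi_h^+$ and $t>0$. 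To deduce monotonicity in $|x_i|$ at fixed $t$, I fix any $x$ with $x_i>0$ and any $x'$ obtained from $x$ by replacing $x_i$ with some $x_i'\in(0,x_i)$; the choice $h=(x_i+x_i')/2>0$ makes $R_h^{(i)}x=x'$, so the previous inequality specializes to $u(x,t)\le u(x',t)$. The remaining case $x_i'\le 0$, $|x_i'|\le x_i$, follows by combining this with the symmetry just established.

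The main technical point is justifying the half-space comparison. My plan is to carry it out first for $u_0\in L^1\cap L^\infty$ by running the reflection at the level of the smoothed solutions $u_\varepsilon$ of Section \ref{ssec.approx}, which are defined on all of $\ren$, classical, and bounded below by $\varepsilon>0$, hence satisfy a uniformly parabolic equation throughout the relevant $u$-range; then the comparison on $\Pi_h^+$ for $u_\varepsilon$ and its reflection $\tilde u_\varepsilon$ falls within the scope of Lemma \ref{comparisboundedunbound}, since both are classical nonnegative solutions, $\tilde u_\varepsilon\ge u_\varepsilon$ on $\partial\Pi_h^+$, and $\int_{\Pi_h^+}(u_\varepsilon(\cdot,0)-\tilde u_\varepsilon(\cdot,0))_+\,dx=0$. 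The monotone passage $\varepsilon\to 0^+$ of \eqref{un} then transfers the inequality to $u$. For general $u_0\in L^1(\ren)$, I would approximate by the bounded SSNI truncations $u_0\wedge k$ (truncation at a constant preserves both symmetry and monotonicity in each $|x_i|$), apply the previous step to each, and pass $k\to\infty$ using the $L^1$-continuity of the semigroup furnished by Theorem \ref{EUWES}.
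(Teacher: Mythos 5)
Your argument is correct and follows essentially the same route as the paper: Aleksandrov reflection across hyperplanes $\{x_i=c\}$ combined with the half-space comparison of Lemma \ref{comparisboundedunbound}, justified at the level of the uniformly parabolic approximations $u_\varepsilon$ and then passed to the limit. The only (harmless) difference is organizational: the paper first proves the reflection inequality across $\{x_i=h/2\}$ and then runs a second comparison between $u$ and its translate $\widehat{u}(x,t)=u(x_1,\dots,x_i+h,\dots,x_N,t)$ on $\{x_i>0\}$, whereas you observe that the midpoint reflection $h=(x_i+x_i')/2$ already yields $u(x,t)\le u(x',t)$ directly, which slightly streamlines the same underlying argument.
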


 The proof is contained in \cite{FVV23}, but for reader convenience we give some details adding some remarks with respect to the proof proposed in \cite{FVV23}.

\begin{proof}
Let us consider an hyperplane $\mathcal{H}^a_j=\{x_j=a\}$ for any fixed $j\in\{1,\cdots,N\}$ and $a\in\mathbb{R}$. It divides $\mathbb{R}^N$ into the half spaces $\mathcal{H}_j^{a,+}=\{x_j>a\}$ and $\mathcal{H}_j^{a,-}=\{x_j<a\}$. We denote by $\pi_{\mathcal{H}^a_j}$ the specular symmetry that maps a point $x\in \mathcal{H}_j^{a,+}$  into $\pi_{\mathcal{H}^a_j}(x)\in \mathcal{H}_j^{a,-}$, its symmetric image with respect to $\mathcal{H}^a_j$.

Let $u$ be a  nonnegative solution of the Cauchy problem for \eqref{APM} with
nonnegative initial data $u_0\in L^1(\mathbb{R}^N)$. If  for a given hyperplane $\mathcal{H}^a_j$ with  $j=1,\cdots,N$ we have
$$u_0(\pi_{\mathcal{H}^a_j}(x))\leq u_0(x)\, \text{ for all }x\in \mathcal{H}{_j}^{a,+}$$
then for all $t$
\begin{equation} \label{DD}
u(\pi_{\mathcal{H}^a_j}(x),t)\leq u(x,t)\quad \text{ for all }(x,t)\in \mathcal{H}_{j}^{a,+} \times  (0,\infty).
\end{equation}
This result is proved \cite{FVV23} ( see Proposition 4.1) for $a=0$. By translation it is clear by reading the proof that the previous result holds  for any $a\in\mathbb{R}$.
By property \eqref{DD} the solution $u(x,t)$ is a function in $|x_i|$. We want to apply Proposition \ref{comparisboundedunbound} in $H_i^+$, to $u(x,t)$ and to $\widehat{u}(x,t)=u(x_1,\cdots,x_i+h,\cdots,x_N,t)$. We have to check the parabolic boundary conditions. Obviously we have $u(x,0)=u_0(x)\geq u_0(x_1,\cdots,x_i+h,\cdots,x_N)=\widehat{u}(x,0)$.  Now we prove that
\begin{equation}\label{boundary 2}
u(x_1,\cdots,0,\cdots,x_N,t)\geq u(x_1,\cdots,h,\cdots,x_N,t)=\widehat{u}(x_1,\cdots,0,\cdots,x_N,t) \quad \hbox{ for all }t>0.
\end{equation}
Let us consider the  hyperplane  $\mathcal{H}^{\frac{h}{2}}_i=\{x_i=\frac{h}{2}\}$ and the two solutions $u(x,t)$ and $${u}_{1}(x,t)=u(\pi_{\mathcal{H}^{\frac{h}{2}}_i}(x),t)=
u(x_{1},...,h-x_{i},...,x_{N},t)$$ in $\mathcal{H}^{\frac{h}{2},-}_i$. For all $x\in \mathcal{H}^{\frac{h}{2},-}_i $, notice that $\pi_{\mathcal{H}^{\frac{h}{2}}_i}(x)=(x_{1},...,h-x_{i},...,x_{N})$ and since $|h-x_{i}|=h-x_{i}\geq |x_{i}|$, recalling our assumption on $u_0$ we get
$$u(x,0)=u_0(x)\geq u_0(\pi_{\mathcal{H}^{\frac{h}{2}}_i}(x))={u}_{1}(x,0).$$
Moreover by construction
$$u(x,t)={u_{1}}(x,t) \quad \forall x\in\mathcal{H}_i^{\frac{h}{2}} \quad  \forall t>0.$$
Proposition \ref{comparisboundedunbound} applied in $\mathcal{H}_i^{\frac{h}{2},-}$ yields
$$u(x,t)\geq{u}_{1}(x,t) \quad \forall x\in\mathcal{H}_i^{\frac{h}{2},-} \quad  \forall t>0.$$
From this inequality it follows in particular that \eqref{boundary 2} holds taking $x$ such that $x_i=0$. Therefore we can apply Proposition \ref{comparisboundedunbound} to $u,\,\widehat{u}$ in order to obtain
\[
u(x,t)\geq \widehat{u}(x,t),\quad x\in H^{+}_{i},\,t>0.
\]
Now, take any $\bar{x}_{i},\,\bar{\bar{x_{i}}}>0$ such that $|\bar{\bar{x_{i}}}|>|\bar{x_{i}}|$. Set $h:=|\bar{\bar{x_{i}}}|-|\bar{x_{i}}|>0$. Then we have
\[
u(x_{1},...,\bar{x}_{i},...,x_{N})\geq u(x_{1},...,\bar{x}_{i}+h,...,x_{N})=u(x_{1},...,\bar{\bar{x}}_{i},...,x_{N}).
\]
Due to the symmetry of $u$ with respect to each variable, we have that the previous inequality holds for any $\bar{x}_{i},\,\bar{\bar{x_{i}}}$ such that $|\bar{\bar{x_{i}}}|>|\bar{x_{i}}|$.
\end{proof}

There is a weaker version of this property that  applies to nonnegative solutions with compactly supported initial data.

\begin{proposition}\label{Prop 3b}
Let $u$ be a  nonnegative solution of the Cauchy problem for \eqref{APM} with bounded  initial data supported in the parallelepiped  \ $Q(\vec{a})=[-a_1,a_1]\times \cdots \times [-a_N,a_N]$. Then, for every coordinate direction $x_i$ and every fixed  $t>0$, the solution $u(x_1,\cdots, x_N,t)$ is monotone nonincreasing with respect to the variable $x_i$ in the interval $(a_i,\infty)$, when the rest of the variables are fixed. Analogously, the solution is monotone increasing in $x_i$ for negative values $x_i\in (-\infty, -a_i)$.
\end{proposition}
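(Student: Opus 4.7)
I would follow the reflection/comparison strategy already used in the proof of Proposition~\ref{Prop 3}, replacing the SSNI hypothesis by the compact-support hypothesis on $u_0$. The crucial observation is that, since $u_0\equiv 0$ outside $Q(\vec a)$, one can reflect across hyperplanes $\mathcal H_j^a$ lying strictly beyond the support in the $x_j$-direction (i.e.\ with $|a|>a_j$). Such a reflection automatically sends points outside the support to points also outside the support, so that the reflected initial datum vanishes on the side of the hyperplane opposite to the bulk of the data.

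\textbf{Main steps.} I would fix a direction $j$, freeze the other coordinates, and pick arbitrary values $a_j<\bar{x}_j<\bar{\bar{x}}_j$. Setting $a:=(\bar{x}_j+\bar{\bar{x}}_j)/2>a_j$ and denoting by $\pi=\pi_{\mathcal H_j^a}$ the reflection across $\mathcal H_j^a$, one observes that for every $x\in\mathcal H_j^{a,-}$ the reflected point satisfies $\pi(x)_j=2a-x_j\ge a>a_j$, whence $u_0(\pi(x))=0\le u_0(x)$. This is precisely the hypothesis of the reflection/comparison principle (Proposition~4.1 of \cite{FVV23}, reproduced as~\eqref{DD}), formulated with $\mathcal H_j^{a,-}$ in place of $\mathcal H_j^{a,+}$; the two half-spaces play symmetric roles, so the principle applies equally well in this orientation. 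It yields
\[
u(\pi(x),t)\le u(x,t)\qquad \text{for all }(x,t)\in \mathcal H_j^{a,-}\times(0,\infty).
\]
Evaluating at $x=(x_1,\ldots,\bar{x}_j,\ldots,x_N)$, whose $j$-th coordinate is $\bar{x}_j<a$ and whose $\pi$-image has $j$-th coordinate $\bar{\bar{x}}_j$, delivers $u(\ldots,\bar{\bar{x}}_j,\ldots,t)\le u(\ldots,\bar{x}_j,\ldots,t)$, which is the monotonicity claimed on $(a_j,\infty)$. The analogous statement on $(-\infty,-a_j)$ follows by the mirror argument with a hyperplane $\mathcal H_j^a$ at $a<-a_j$, relying on the invariance of \eqref{APM} under $x_j\mapsto -x_j$.

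\textbf{Main subtlety.} No genuinely new difficulty arises; the only point that must be handled with care is the direction of the inequality in the reflection principle. In Proposition~\ref{Prop 3} the SSNI assumption produces $u_0\circ\pi\le u_0$ on the half-space that does \emph{not} contain the bulk of the data, while here the compact-support assumption produces the same inequality on the half-space that \emph{does} contain the support (because the hyperplane is chosen strictly outside it, $|a|>a_j$). The two situations are related by the obvious swap of the labels $+$ and $-$, which is permitted by the symmetry of the construction. If one prefers a fully self-contained derivation, it suffices to view $(x,t)\mapsto u(\pi(x),t)$ as a nonnegative solution of \eqref{APM} in $\mathcal H_j^{a,-}\times(0,\infty)$ (by invariance under coordinate reflection), agreeing with $u$ on the parabolic boundary $\mathcal H_j^a$ at all times, and to apply Lemma~\ref{comparisboundedunbound} to the smooth approximations $u_{n,\varepsilon}$ of Subsection~\ref{ssec.approx} before passing to the limit in $n$ and $\varepsilon$.
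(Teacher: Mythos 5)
Your proposal is correct and follows essentially the same route as the paper, which proves Proposition \ref{Prop 3b} precisely by the Aleksandrov reflection/comparison principle of \cite{FVV23} (the inequality \eqref{DD}, valid for any hyperplane $\{x_j=a\}$ by translation), applied across hyperplanes placed strictly outside the support so that the reflected datum vanishes there. Your handling of the swapped half-space labels and the fallback via Lemma \ref{comparisboundedunbound} on the approximations $u_{n,\varepsilon}$ matches the intended argument, so there is nothing to add.
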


Both propositions are proved by means of the Aleksandrov reflection principle used in \cite{FVV23}, Section 4, for the fast diffusion case.

Combining the results for different directions we obtain a multidimensional result as follows.

\begin{corollary}\label{Cor 3b}Suppose we are in the assumptions of Proposition \ref {Prop 3b}. Let $x_0=(x_{01}...x_{0N})$ be a point in $\ren$ with all coordinates $x_{0i}\ge a_i$ and $K(x_0)$ be the conical region
$$
K(x_0)=\{x: x_i> x_{0i} \ \forall i\}.
$$
Then $u(x,t)$ is monotone nonincreasing along every straight line that starts at $x_0$ and enters $K(x_0)$.
\end{corollary}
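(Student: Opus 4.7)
The plan is to derive the corollary from the one-dimensional coordinate monotonicity supplied by Proposition \ref{Prop 3b}, by walking from one point of the ray to another one coordinate at a time. Going through axis-aligned monotonicity is natural here because an Aleksandrov-type reflection across a \emph{tilted} hyperplane is not available: the anisotropic equation \eqref{APM} is invariant under reflections across the coordinate hyperplanes $\{x_i=c\}$, but it is not invariant under reflections that mix coordinate directions, so the reflection tool must remain axis-aligned.

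A straight line starting at $x_0$ and entering $K(x_0)$ must have direction vector $\vec{v}=(v_1,\ldots,v_N)$ with $v_i>0$ for every $i$. Fix such a $\vec{v}$, fix $t>0$, and pick $0\le s_1<s_2$. Set $p:=x_0+s_1\vec{v}$ and $q:=x_0+s_2\vec{v}$, so that $p_i=x_{0i}+s_1 v_i\ge x_{0i}\ge a_i$ and $q_i>p_i$ for every $i$. Build the telescoping sequence of intermediate points
\begin{equation*}
p^{(0)}:=p,\qquad p^{(i)}:=(q_1,\ldots,q_i,p_{i+1},\ldots,p_N),\quad i=1,\ldots,N,
\end{equation*}
whose endpoints are $p^{(0)}=p$ and $p^{(N)}=q$, and whose consecutive entries $p^{(i-1)}$, $p^{(i)}$ differ only in the $i$-th coordinate, which increases from $p_i$ to $q_i$. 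The values of the other coordinates are immaterial for the conclusion of Proposition \ref{Prop 3b}, so applying it to the variable $x_i$ yields
\begin{equation*}
u(p^{(i-1)},t)\ge u(p^{(i)},t),\qquad i=1,\ldots,N,
\end{equation*}
and telescoping gives $u(p,t)\ge u(q,t)$, i.e., $s\mapsto u(x_0+s\vec{v},t)$ is nonincreasing on $[0,\infty)$.

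The only technical point is that Proposition \ref{Prop 3b} states monotonicity on the \emph{open} interval $(a_i,\infty)$, whereas at $s_1=0$ one may have $p_i=x_{0i}=a_i$ on the boundary. This is disposed of by continuity: item 5 of Theorem \ref{EUWES} applies since $u_0$ is bounded with compact support, so $u(\cdot,t)$ is continuous and monotonicity on $(a_i,\infty)$ passes to $[a_i,\infty)$ in the limit. No essential obstacle arises; the corollary is really a bookkeeping consequence of the coordinatewise monotonicity already proved.
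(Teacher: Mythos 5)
Your telescoping argument (changing one coordinate at a time and applying Proposition \ref{Prop 3b} to each coordinate direction, with the boundary case $x_{0i}=a_i$ handled by continuity of $u(\cdot,t)$) is correct and is exactly the combination of coordinatewise monotonicity that the paper invokes, since the paper offers no further proof beyond ``combining the results for different directions.'' Your write-up simply makes that bookkeeping explicit.
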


This corollary will be used later in the proof of Proposition \ref{conc.as.csdata}.

\medskip

\noindent {\bf Remark.}  We  have to consider  (see Section  \ref{ssec.support}) the case where the positivity set of the fundamental solution $U_1$ with mass one is not $\mathbb{R}^N$, as expected from the compact support property of the construction process. Attention must be paid to this crucial difference with linear diffusion or fast diffusion.


\section{Existence and uniqueness of self-similar fundamental solution.}\label{sec.exfs}
The main result of this Section, which is one of the main results of the paper, regards the existence and uniqueness of self-similar fundamental solution with finite mass. As we will see, for the existence part there is a substantial difference in the fixed point argument used in the proof, which prevents using the crucial barrier function employed in  \cite{FVV23}.


\begin{theorem}\label{fundamental solution} Under the restrictions (H1) and (H2), for any mass $M>0$ there is a unique self-similar fundamental solution $U_M(x,t)\ge 0$ of equation \eqref{APM} with mass $M$ obtained as limit of approximate integrable and nonnegative integrable solutions. The profile $F_M$  of such a solution is an SSNI (separately symmetric and non-increasing) function.  $F_M(y)$ has compact support in all directions. In fact, the set $\Omega=\{y: F_M(y)>0\}$ is open,  bounded and star-shaped around the origin. $F$ is $C^\infty$ smooth inside $\Omega$ and continuous in $\ren$.
\end{theorem}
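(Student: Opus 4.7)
The plan is to construct $U_M$ as the limit of an approximating sequence of finite-mass solutions whose initial data concentrate at the origin, and then to derive self-similarity from the mass-preserving scaling invariance of the equation combined with uniqueness of the fundamental solution. This bypasses the barrier/fixed-point construction used in the fast diffusion paper \cite{FVV23}, which relied on pointwise positivity of the profile that is no longer available when $m_i>1$.

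First, I would fix a sequence of smooth, compactly supported, SSNI initial data $u_{0,n}$ with $\int u_{0,n}\,dx = M$ and $u_{0,n}\to M\delta_0$ weakly as measures. By Theorem \ref{EUWES}, there is a unique weak solution $u_n$ of the Cauchy problem with datum $u_{0,n}$, and Proposition \ref{Prop 3} ensures that $u_n(\cdot,t)$ remains SSNI. The smoothing effect \eqref{Linfty-L1} gives the uniform bound $\|u_n(t)\|_\infty \le C\,t^{-\alpha}M^{2\alpha/N}$, the energy estimates \eqref{Energywholespace bis} provide local equicontinuity on parabolic cylinders away from $t=0$, and an Aronson--Caffarelli style tail moment estimate keeps mass from escaping to infinity. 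A diagonal extraction then yields a limit $U_M$ that is a nonnegative, SSNI, $L^1$ weak solution of \eqref{APM} having $M\delta_0$ as initial trace. To obtain self-similarity, observe that the mass-preserving rescaling
\[
(\mathcal{S}_k U)(x,t) := k^\alpha \,U\bigl(k^{\sigma_1\alpha}x_1,\ldots,k^{\sigma_N\alpha}x_N,\,kt\bigr),
\]
with $\alpha$ and $\sigma_i$ as in \eqref{alfa}--\eqref{ai}, maps any fundamental solution of mass $M$ to another such solution, since the Dirac trace is preserved by the identity $\alpha=\sum_i \sigma_i\alpha$. Uniqueness of the fundamental solution with prescribed Dirac datum, deduced from the $L^1$-contraction \eqref{L1_contr} applied to our constructed family via a Brezis--Friedman style approximation, forces $\mathcal{S}_k U_M = U_M$ for every $k>0$; choosing $k=1/t$ produces the self-similar form \eqref{sss} and identifies the profile $F_M$.

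The qualitative properties of $F_M$ follow once it is in hand. Compact support along every direction is obtained by a finite-speed-of-propagation argument in the rescaled equation \eqref{APMs}: I would construct a compactly supported stationary supersolution of \eqref{StatEq}---a suitably dilated, anisotropically scaled Barenblatt-type function chosen to dominate $F_M$ outside a large box---and apply the comparison principle of Lemma \ref{comparisboundedunbound} in the rescaled variables to conclude $F_M\equiv 0$ outside a bounded set. The positivity set $\Omega=\{F_M>0\}$ is then open by continuity of $F_M$ (an anisotropic DiBenedetto-type Hölder regularity result, e.g.\ \cite{H11,BS19}), bounded by the support control, and star-shaped around the origin by SSNI: if $F_M(y)>0$ and $y'=ty$ with $t\in[0,1]$, then $|y'_i|\le|y_i|$ and \eqref{mon} gives $F_M(y')\ge F_M(y)>0$. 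Inside $\Omega$ the stationary equation \eqref{StatEq} is uniformly elliptic, so a standard bootstrap yields $F_M\in C^\infty(\Omega)$.

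The main obstacle I expect is the construction of the compactly supported supersolution of \eqref{StatEq} that controls $F_M$ at infinity. Unlike the isotropic case, where the explicit Barenblatt profile \eqref{Fm} plays this role, no closed-form solution is available here, and the anisotropic coupling of the different exponents $m_i$ obstructs a naive tensor-product ansatz; one has to calibrate a profile that is simultaneously a supersolution with respect to all $N$ diffusion terms together with the confining drift. A secondary delicate point is verifying that the limit $U_M$ actually attains the full mass $M$ as $t\to 0^+$, i.e.\ that no mass is lost at infinity during the passage $n\to\infty$; this is where the uniform tail estimate, combined with the conservation of mass in \eqref{boundLpnormindata}, is essential. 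Together these two ingredients replace the barrier-plus-fixed-point strategy of \cite{FVV23} and are the heart of the new argument for the slow diffusion regime.
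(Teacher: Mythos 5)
Your construction-by-approximation gets you a nonnegative SSNI fundamental solution with initial trace $M\delta_0$, but the pivotal step --- deducing self-similarity from ``uniqueness of the fundamental solution with prescribed Dirac datum'' --- is a genuine gap. The $L^1$-contraction \eqref{L1_contr} is stated (and proved) for pairs of solutions with $L^1$ initial data; it does not by itself yield uniqueness for measure-valued data, because to compare $U_M$ with $\mathcal{S}_k U_M$ you would need $\|U_M(t)-\mathcal{S}_k U_M(t)\|_1\to 0$ as $t\to 0^+$, and both solutions converging weakly-$*$ to $M\delta_0$ does not give that. In the isotropic PME this uniqueness is Pierre's theorem, a delicate result relying on structure (explicit Barenblatt barriers, Aronson--Caffarelli estimates, pressure arguments) that has no established anisotropic counterpart; indeed the paper deliberately claims uniqueness only \emph{within the class of self-similar} fundamental solutions, and even in the asymptotic section, when a possibly non-self-similar fundamental solution $\widetilde U$ arises as a rescaling limit, it is identified with $U_M$ not by a uniqueness theorem for Dirac data but by a separate Lyapunov-functional argument. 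So the implication $\mathcal{S}_k U_M=U_M$ is unsupported, and with it the identification of the profile $F_M$. The paper's route is structurally different precisely to avoid this: it works on the rescaled flow \eqref{APMs}, shows the semigroup maps a convex set $\mathcal K$ of bounded, box-supported, SSNI data of mass $M$ into itself (the support control coming from comparison with explicit one-dimensional travelling waves, not from a stationary supersolution), extracts a fixed point by Fr\'echet--Kolmogorov compactness plus Schauder, and then proves the resulting periodic orbit is stationary via the Lyapunov functional $J$ together with a strong maximum principle argument on the positivity set; self-similarity is thus built in from the start rather than deduced a posteriori.

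Two secondary ingredients in your plan are also left unproven and are nontrivial in this setting: the ``Aronson--Caffarelli style tail moment estimate'' preventing mass loss as $u_{0,n}\to M\delta_0$, and the compactly supported supersolution of the stationary equation \eqref{StatEq} (you correctly flag that no explicit anisotropic Barenblatt-type profile is available, but the proof then has to supply a substitute; the paper sidesteps this by running the comparison in the original variables against travelling waves, for which the computation is one-dimensional and explicit). The parts of your argument concerning openness, boundedness, star-shapedness of $\Omega$ from SSNI and continuity, and interior $C^\infty$ smoothness by nondegenerate quasilinear bootstrap, do match the paper and are fine once existence and self-similarity are secured.
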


\subsection{Sketch of the uniqueness part}

The proof of the uniqueness of the  self-similar fundamental  solutions  stated in Theorem \ref{fundamental solution} combines a number of different arguments that we have already established in the fast diffusion paper \cite{FVV23}, also in the fractional $p$-Laplacian. We think they need not be repeated here. Indeed\normalcolor , the SSNI property  of the self-similar solutions can be proved as in \cite[Lemma 6.2]{FVV21}. The star-shaped property  of the set of positivity of profiles $F_M$ follows as in \cite[Lemma 6.3]{FVV21}. Observe that by \cite{H11}, the stationary profile $F_{M}$ is continuous. Finally we refer to \cite[Subsection 6.1]{FVV23} for the mass-difference analysis that yields the uniqueness result.

\medskip

The proof of the existence is divided into several subsections containing steps that contribute needed results.
The first one is a precise control on how large the solutions are and where are they supported.  The analysis of the geometry of the support will be done in Section \ref{ssec.support} using the SSNI monotonicity properties of $F$.

\subsection{A novel upper barrier for the rescaled flow}\label{sec.upper}%

The construction of a set of data  that is conserved by the flow after a certain amount of time is a very novel feature  of our existence proof. As we will see below, it is crucial that ``the flow does not leave the assigned box'' after a certain time.  We argue as follows.

Let $S_\tau$  be the semigroup map associated to the rescaled flow \eqref{APMs},   (\textit{i.e.} the $v$ flow, $S_\tau v_0=v(y,\tau)$).
We take $t_0=1$ in \eqref{NewVariables}, then $\tau_0=0$. We consider bounded  initial data $\vp\ge 0$ with the following  conditions in terms of constants $M,L,R>0$ to be chosen:

-A1)  $\int \vp(x)\,dx=M$.

-A2) $0\le \vp(x)\le L$.

-A3) $\vp(x)=0$  if $x$ does not lie in the box: $Q(R)=\{x: |x_i| \le R  \quad \forall i=1,\ldots, N\}$.

-A4)  $\vp$ is SSNI.

Let  us call $\mathcal{K}$ the set of functions $\vp\ge 0$ satisfying A1), A2), A3),  A4).  We recall that $\mathcal{K}=\mathcal{K}(M,L,R)$. It is clear that $\mathcal{K}$ is a closed and convex subset of $L^1(\ren)$.

Next, we observe that under the condition $M \le 2^{N} L\,R^N$ the set $\mathcal{K}$ is not empty. Indeed, let us take the radius $R_1$ such that $M = 2^{N}L\,R_1^N$, then we have $0<R_1<R$.
Let now $\vp(x)=L$ if $x\in Q(R_1)$, and $\vp(x)=0$ otherwise. We have
$\int \vp(x)\,dx=M$ and $\vp(x)\in \mathcal{K}$ .

\begin{proposition}  For every $\tau_1>0$ and for every $M>0$,  there is a choice of $L$ and $R$ such that, under the above conditions A1)-A4), the flow map $S_{\tau_1}$  satisfies
$$
S_{\tau_1}(\mathcal{K})\subset \mathcal{K}.
$$
\end{proposition}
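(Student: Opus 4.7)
Fix any $v_0\in\mathcal{K}(M,L,R)$, put $v(\tau)=S_\tau v_0$, and take the time shift $t_0=1$ so that $t=e^\tau-1$. I would verify properties A1)--A4) for $v(\tau_1)$ one at a time, fixing $L$ and $R$ in terms of $M$ and $\tau_1$ as I go. Property A1) (mass) is immediate from item 4 of Theorem~\ref{EUWES} combined with the fact that the rescaling \eqref{NewVariables} preserves the $L^1$-norm. Property A4) (SSNI) is inherited from the $u$-flow via Proposition~\ref{Prop 3}: the rescaling $y_i=x_i(t+1)^{-\sigma_i\alpha}$ is a positive diagonal scaling, so it preserves separate even symmetries and monotonicities in each $|y_i|$.

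For the $L^\infty$ bound A2), I invoke the smoothing effect of Theorem~\ref{L1LI}. Using $\|u_0\|_1=M$, one has
\[
\|v(\tau_1)\|_\infty \;=\; e^{\alpha\tau_1}\,\|u(e^{\tau_1}-1)\|_\infty \;\le\; C\,e^{\alpha\tau_1}(e^{\tau_1}-1)^{-\alpha}\,M^{2\alpha/N}\;=:\;L^*(M,\tau_1),
\]
a bound that depends \emph{only} on $M$ and $\tau_1$, not on the $L$ or $R$ used to define $v_0$. I then commit to any $L\ge L^*(M,\tau_1)$.

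For the support bound A3) I rely on a quantitative anisotropic finite-propagation estimate: if $\mathrm{supp}(u_0)\subset Q(R)$, $\|u_0\|_\infty\le L$ and $\|u_0\|_1=M$, then in each coordinate direction
\[
\mathrm{supp}(u(t))\;\subset\;\{x:|x_i|\le R+\Phi_i(M,L,t)\},
\]
with $\Phi_i$ explicit in the data. Returning to $y$-variables at $\tau=\tau_1$,
\[
|y_i|\;\le\;\bigl(R+\Phi_i(M,L^*,e^{\tau_1}-1)\bigr)\,e^{-\tau_1\sigma_i\alpha},
\]
so the desired invariance $\mathrm{supp}(v(\tau_1))\subset Q(R)$ reduces to
\[
\Phi_i(M,L^*,e^{\tau_1}-1)\;\le\;R\bigl(e^{\tau_1\sigma_i\alpha}-1\bigr),\qquad i=1,\ldots,N.
\]
Since $\sigma_i>0$ by (H2), the right-hand prefactor is strictly positive; once $L=L^*$ is fixed, the left-hand side depends only on $M$ and $\tau_1$, and the inequality is satisfied by taking $R$ sufficiently large, also above the non-emptiness threshold $(M/2^N L)^{1/N}$.

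The principal obstacle is precisely the quantitative finite-propagation estimate above. An explicit anisotropic Barenblatt is not available and the SSFS $U_M$ is what we are trying to construct, so the estimate cannot be borrowed from those objects. I would therefore derive it from an independently constructed compactly-supported supersolution of \eqref{APM}, applying the comparison principle of Theorem~\ref{contraction}: either by rescaling, via \eqref{scal.tr}, a reference compactly-supported solution with known support, or by local energy/De~Giorgi methods adapted to the APME in the spirit of the regularity results recalled from \cite{H11,BS19}. Once such a super-barrier is in hand, all four properties A1)--A4) are verified with the choices above, proving $S_{\tau_1}(\mathcal{K})\subset\mathcal{K}$.
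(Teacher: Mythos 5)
Your treatment of A1), A2) and A4) coincides with the paper's: mass conservation plus the $L^1$-invariance of the rescaling, the smoothing effect of Theorem \ref{L1LI} giving a bound $L^*$ depending only on $M$ and $\tau_1$, and Proposition \ref{Prop 3} for the SSNI property. Your reduction of A3) is also structurally the same as the paper's: a directional finite-propagation bound $\Phi_i$ independent of $R$, followed by the observation that $\sigma_i>0$ makes the rescaling factor $(t_1+1)^{-\sigma_i\alpha}<1$, so a large $R$ absorbs the expansion.

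However, there is a genuine gap: the quantitative finite-propagation estimate, which you yourself single out as the principal obstacle, is exactly the novel content of this proposition and you do not prove it. Your fallback suggestions do not close it. ``Rescaling a reference compactly-supported solution with known support'' is circular at this stage, since no such reference object for the anisotropic equation is yet available (the SSFS is what is being constructed), and local energy/De~Giorgi arguments would be a substantial detour that does not obviously yield a directional bound growing only linearly in $t$ and independent of $R$. The paper fills the gap with a simple explicit barrier: in each half-space $D_i=\{x_i\ge R\}$ one compares $u$ with the one-dimensional travelling wave $\overline{u}^{\,m_i-1}(x,t)=c(m_i)A\,(At+K-x_i)_+$, which is a supersolution of the full anisotropic equation there because it depends only on $x_i$ and solves (super-solves) the 1D PME with exponent $m_i$; choosing $K=R+c^{-1}A^{-1}L^{m_i-1}$ makes $\overline{u}\ge L\ge u$ on the lateral boundary $x_i=R$, while $u_0=0$ in $D_i$ by A3), so Lemma \ref{comparisboundedunbound} applies and gives $\Phi_i(M,L,t)=c^{-1}A^{-1}L^{m_i-1}+At$. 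With this $\Phi_i$ your final inequality $\bigl(R+\Phi_i(M,L^*,t_1)\bigr)(t_1+1)^{-\sigma_i\alpha}\le R$ is satisfied for $R$ large, and the proof closes. Without exhibiting such a barrier (or an equivalent quantitative propagation bound), your argument remains a correct plan for A1), A2), A4) and an unproved claim for A3).
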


{\sc Proof.} We can put $M=1$ by rescaling (see Section \ref{sec scaling}).

(1) The conservation of mass is true.

(2) By Proposition \eqref{Prop 3}, if the initial datum $v_0(y)$ is  SSNI, then $v(y,\tau)$ is also SSNI.

(3)  To prove A2 we argue as follows. Let us consider $\tau_1=\log(t_1+1)$ with $t_1>0$.
The universal $L^1$-$L^\infty$  estimate \eqref{Linfty-L1} for all solutions implies as we know that
$$
v(y,\tau)\le (t+1)^\alpha u(x,t)\le C(t+1)^\alpha t^{-\alpha} M^{\gamma}=C M^{\gamma}(1+(1/t))^\alpha=
C M^{\gamma}\left(1+\frac{1}{e^\tau-1}\right)^\alpha
$$
Then for $\tau\geq\tau_1$ we get
$$
v(y,\tau)\le C M^{\gamma}\left(1+\frac{1}{e^{\tau_1}-1}\right)^\alpha.$$
If $L$ is large enough (\textit{i.e.} $L\geq C M^{\gamma}(1+\frac{1}{e^{\tau_1}-1})^\alpha$) we get
$$
v(y,\tau)\le L \text{ for } \tau\geq\tau_1.
$$

(4) Now we begin to check the property of not leaving the box $Q(R)$ if $R$ is well chosen once $M$ and $L$ are fixed. The novelty of the  argument we present here lies in the control of the expansion of the support. Namely, we check the property of not leaving the box in any of the horizontal space directions.

(i) We prefer to go back to the $u$ variable in the time interval $0\le t\le  t_1$
and we consider the domain $D_1=\{x: x_1\ge R\}$ that is a half-space along the $x_1$ axis starting at $x_1=R$.
 We will compare $u$ in $D_1$ for times $0<t< t_1$ with a super-solution $\overline{u}$ that we choose as a one dimensional super-solution of the PME in that direction $e_1$.

(ii) {\bf The barrier}. Indeed, we can take as $\overline{u}(x,t)$ the explicit 1D travelling wave with speed $A<1$ of the form
$$
\overline{u}^{m_1-1}(x,t)= c(m_1)A(At+K-x_1)_+,
$$
where $c=c(m_1)$ is a known constant ($c(m_1)\leq\frac{m_1-1}{m_1}$) where $A$ is fixed and $K>0$ will  be chosen soon.

We check that we can apply the parabolic comparison of Lemma \ref{comparisboundedunbound}. Recall that we start from an initial datum $u_0(x)=v_0(x)$ belonging to $\mathcal{K}$. Using A3), the initial datum $u_0$ is zero in $D_1$, then $u(x,0)\leq\overline{u}(x,0)$ for $x\in D_1$.
Moreover, by A2) we know that $u(x,t)\le L$ for all $x$ and $t$ and in particular $u(x,t)\le L$ on the lateral boundary of $D_1$ for all $t>0$. Then at $x_1=R$ we have to get
$$
L^{m_1-1}\leq c(m_1)A(At+K-R) \quad \mbox{for all } \ t>0,
$$
 This is possible by a convenient choice of $K$, for instance $cA(K-R)= L^{m_1-1}$, thus
$$
K= R + c^{-1}A^{-1}L^{m_1-1}.
$$

In conclusion,  we can apply the parabolic comparison. In this way we conclude that
$$
u(x,t)\le \overline{u}(x,t) \quad \mbox{ if } \ x_1\ge R, \ t>0.
$$
This in particular means that
$$
u(x,t)=0 \quad \mbox{ if } \ x_1\ge R+  c^{-1}A^{-1} L^{m_1-1}+At\,.
$$

(iii) We translate this in terms of the rescaled variables $v,y,\tau$. We get
$$
v(y,\tau)=0  \quad \mbox{ if } \ y_1\ge (R+ c^{ -1 }A^{-1}L^{m_1-1}+At)(t+1)^{-\sigma_i\alpha}
$$
We are interested in $t=t_1$ (i.e. $\tau=\tau_1$) where
$$
v(y,\tau_1)=0  \quad \mbox{ if } \ y_1\ge (R+ c^{ -1 }A^{-1}L^{m_1-1}+At_1)(t_1+1)^{-\sigma_i\alpha}
$$
The crucial thing is that when  $R$ is large enough we get
$$
(R+ c^{-1}A^{-1}L^{m_1-1} + t_1 )\,( t_1+1 )^{-\sigma_i\alpha}\le R.
$$
Then we get $v(y,\tau_1)=0$ for $y_1\ge R$. This is what we wanted to prove.

The argument to control the support in the other spatial directions of the box is the same.
This ends the proof of the Proposition.
\qed

\medskip

{\bf Remarks. }  We observe that the technique of the explicit supersolution with a given tail used in the fast diffusion papers \cite{FVV21} and  \cite{FVV23} has not been used this time. The argument here relies on comparison with
one directional travelling waves with compact support, plus a careful inspection of the expansion rates.

The idea of comparing with one dimensional supersolutions can be used for fast diffusion in order to do a proof in this style. This idea is exploited in the paper  \cite {Vaz24} where comparison with VSS solutions is  very accurate. We do not need a very strict accuracy in the present situation.

\subsection{Existence of a fixed point}

After the introductory step of the previous section we prove that the flow map has a fixed point, in other words, a periodic solution. We need the following result compactness result.

\begin{lemma}\label{lemma.precomp}
The image set \ $Y=S_{\tau_1}(\mathcal K(L_1))$  is relatively compact in $X=L^1(\ren)$.
\end{lemma}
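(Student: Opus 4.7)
The plan is to invoke the Kolmogorov--Riesz--Fr\'echet compactness criterion in $L^1(\ren)$, which requires three uniform properties over $v_0\in\mathcal{K}(L_1)$: boundedness in $L^1$, tightness at infinity, and equicontinuity in the mean, $\|w(\cdot+h)-w(\cdot)\|_{L^1(\ren)}\to 0$ as $|h|\to 0$ uniformly on $Y$.

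The first two are immediate from the preparation already in place. Conservation of mass, translated to the rescaled variables, yields $\|S_{\tau_1}v_0\|_{L^1(\ren)}=M$ for every $v_0\in\mathcal{K}$, so $Y$ is uniformly bounded in $L^1$. The proposition of the previous subsection gives $S_{\tau_1}(\mathcal{K})\subset \mathcal{K}$, hence every $w\in Y$ is supported in the fixed box $Q(R)$ and pointwise bounded by $L$; in particular $\int_{|y|>\sqrt{N}R}w\,dy=0$, which provides tightness at infinity for free.

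The substantive step is the equicontinuity in the mean, which I would obtain by exploiting parabolic smoothing on a short time slab ending at $\tau_1$. Fix $\delta\in(0,\tau_1)$. Since $S_{\tau_1-\delta}(\mathcal{K})\subset\mathcal{K}$, the rescaled solution $v(\cdot,\tau_1-\delta)$ is uniformly bounded by $L$ and supported in $Q(R)$ for all $v_0\in\mathcal{K}$. Running the rescaled equation \eqref{APMs} from $\tau_1-\delta$ to $\tau_1$ and applying the energy estimate \eqref{Energywholespace} (in the version for the $v$-equation), one gets a uniform space--time $L^2$ bound on the anisotropic gradients $\partial_{y_i}v^{m_i}$ on this slab, with a constant depending only on $L$, $R$, $\delta$ and the exponents. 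Combined with the uniform $L^\infty$ bound, the interior parabolic continuity results for anisotropic slow diffusion in \cite{H11,BS19} produce a modulus of continuity for $v(\cdot,\tau_1)$ on $Q(R)$ depending only on $L,R,\delta$ and the $m_i$. This uniform modulus immediately gives equicontinuity in the mean for $Y$, and Kolmogorov--Riesz--Fr\'echet delivers relative compactness in $L^1(\ren)$.

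The main obstacle I anticipate is producing the modulus of continuity \emph{uniformly} in $v_0\in\mathcal{K}$ rather than one solution at a time. The key enabling fact is that after the elapsed time $\tau_1-\delta$ every element of the family is already sandwiched between $0$ and $L$ and confined to $Q(R)$, so the regularity estimates apply with universal constants, erasing any dependence on the individual initial datum. If one wished to avoid citing \cite{H11,BS19}, a parallel route is available via the SSNI structure preserved under the flow (Proposition \ref{Prop 3}): each $w\in Y$ is coordinate-wise monotone in $|y_i|$, bounded by $L$, and supported in $Q(R)$, so a Helly-type diagonal extraction produces a pointwise a.e.\ convergent subsequence, and dominated convergence then upgrades it to $L^1$ convergence.
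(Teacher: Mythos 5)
Your overall strategy (Fr\'echet--Kolmogorov, with boundedness and tightness coming for free from the invariance $S_{\tau_1}(\mathcal K)\subset\mathcal K$) matches the paper, but the way you obtain equicontinuity is genuinely different, and it splits into two routes of unequal solidity. The paper never invokes a modulus of continuity: it derives uniform $L^2$ space--time bounds on $\partial_i u^{q}$ (hence $\partial_i v^{q}$) from the energy estimates \eqref{Energywholespace}, selects by a mean-value argument a good intermediate time $\widetilde\tau\in(h/2,h)$ at which translation continuity in $L^1$ holds uniformly (first for $v^{q}$, then for $v$ via H\"older and $|a-b|^p\le|a^p-b^p|$), and then transports this equicontinuity forward to $\tau=\tau_1$ by the $L^1$-contraction of Theorem \ref{contraction}. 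Your primary route has two weak points. First, the inclusion $S_{\tau_1-\delta}(\mathcal K)\subset\mathcal K$ is not available: the invariance proposition is proved only at the time $\tau_1$ (and its multiples), since both the $L^\infty$ bound and the support confinement are arranged to hold precisely at $\tau_1$; at earlier times you must instead fall back on the smoothing effect \eqref{Linfty-L1} and the travelling-wave barrier, which give uniform (but larger) bounds -- reparable, but as written the step is unjustified. Second, you use \cite{H11,BS19} as a black box providing a modulus of continuity that is \emph{uniform} over the family and valid for the rescaled equation \eqref{APMs} with its confining drift; this is plausible for intrinsic-scaling continuity proofs but is a strictly stronger regularity input than anything the paper relies on, and its uniformity and applicability to the drift equation would need to be checked (the paper uses \cite{H11} only for continuity of the stationary profile). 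By contrast, your backup route is fully valid and in fact simpler than both: elements of $Y$ lie in $\mathcal K$, hence are SSNI (preserved by the flow, Proposition \ref{Prop 3}), bounded by $L$ and supported in $Q(R)$; coordinatewise monotone bounded functions admit a Helly-type diagonal selection converging pointwise a.e.\ (their discontinuity sets are null), and dominated convergence with the dominating function $L\chi_{Q(R)}$ upgrades this to $L^1$ convergence. This argument uses no parabolic estimates at all beyond the already established invariance, and it actually shows that $\mathcal K$ itself is compact in $L^1(\ren)$, which would even streamline the subsequent Schauder fixed-point step.
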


\begin{proof} The proof presents some difference w.r. to the one of \cite[Lemma 6.5]{FVV23} in the fast diffusion case. Indeed, here we have to take more care in the use of energy estimates due to porous medium range $m_{i}>1$. First of all, the fact that the  image set $S_h(\mathcal K)$ is bounded in $L^1(\ren)$ and $L^\infty(\ren)$ by already established estimates using the definition of $v$ in terms of $u$. We use then the Fr\'echet-Kolmogorov theorem, that says that a subset $Y\subset L^1(\ren)$ is relatively compact in $L^1(\ren)$ if and only if the following two conditions hold

(FK1) (Equi-continuity in $L^1$ norm)
$$
\displaystyle \lim_{| z | \to 0} \int_{\ren} |f(y)-f (y+z)|\,dy = 0
 $$
 and the limit is uniform on $f\in Y$.

(FK2) (Equi-tightness) We must have
$$
\displaystyle \lim _{r\to \infty }\int _{|x|>r}\left|f(y)\right|\,dy=0
$$
and the limit is uniform  on $f\in Y$.

In our case property  (FK2) holds since  the functions in $Y$ have a  uniformly  bounded support.

For the proof of (FK1) we proceed as follows. Let $v(\cdot,\tau)= S_{\tau}\phi$ and let us go back to the original $u$ formulation to retrieve some convenient energy inequalities.

Putting then $p_i=2q-m_i$ in \eqref{Energywholespace} with some large $q$ (the same for all $i$) we conclude that all the derivatives $\partial_i u^{q}$ are bounded in $L^2(0,T:L^2(\ren))$ for $T>0$ since the integrals in the right-hand side are uniformly bounded. It easy follows that the same is true for $\partial_i v^{q}$.

We may continue in a more traditional way \cite{FVV23}.  The above energy estimate  means that for some $\widetilde{\tau} \in (h/2,h)$ the integral
$$
\int_{\ren} \left|\frac{\partial}{\partial x_i } v^{q}(y,\widetilde{\tau})\right|^2\,dy \le \frac2{h}\int_{h/2}^h\int_{\ren} \left|\frac{\partial v^{q}(y,{\tau})}{\partial x_i }\right|^2\,dy d\tau \le C_2/h,
$$
where $C_2$ depends on $L$, $m_i$, $q$, the mass $M$ and $h$.
By an easy functional immersion this implies that  for every small displacement  $z$ with $|z|\le \delta$ we have for every $r>0$
$$
\int_{B_r(0)} |v^{q}(y,\widetilde{\tau})-v^{q}(y+z,\widetilde{\tau})|\,dy \le \delta C_3
$$
and $C_3$ is a constant that depends only $r$, $h$ and on $C_2$. Moreover, using H\"{o}lder inequality and the inequality $|a-b|^p\leq|a^p-b^p|$ for $a,b>0$ and $p>1$ we get
$$
\int_{B_r(0)} |v(y,\widetilde{\tau})-v(y+z,\widetilde{\tau})|\,dy \le \delta^{1/q} C_4
$$
and $C_4$ is a constant that depends only $r$, $h$, $N$, $p_i$ and on $C_2$. This equi-continuity bound in the interior is independent of the particular initial data in $\phi  \in \mathcal K$.
Putting $r>R$ and using  A3) we get full equi-continuity at $\tau=\widetilde{\tau}$:
$$
\int_{\ren} |v(y,\widetilde{\tau})-v(y+z,\widetilde{\tau})|\,dy \le \ve
$$
uniformly on $\phi\in \mathcal K$ if $\delta$ is small enough. Since both $v(y,\tau)$ and $v(y+z,\tau)$ are solutions of the renormalized equation, we conclude from the $L^1$ contraction property \eqref{contraction} that
$$
\int_{\ren} |v(y,\tau)-v(y+z,\tau)|\,dy \le \ve
$$
uniformly on $\phi\in \mathcal K$ for all $\tau\ge \widetilde{\tau}$, in particular for $\tau=\tau_1$.
This makes the set $S_{\tau_1}(\mathcal K)$ precompact in $L^1(\ren)$.
\end{proof}

It now follows from the Schauder Fixed Point Theorem,  see \cite{Evansbk}, Section 9,
that there exists at least  a fixed point $\phi_{\tau_1} \in \mathcal{K}$, \textit{i.\,e.,} we have  $ S_{\tau_1}(\phi_{\tau_1}) = \phi_{\tau_1}$. The fixed point is in $\mathcal{K}$, so it is not trivial because  its mass is $M$.

Iterating the equality we get periodicity for the orbit $V\nlc_{\tau_1}(y, \tau)$  starting at $\tau = 0$
from $V_{\tau_1}(y,0)=\phi_{\tau_1}(y)$:
$$
V_{\tau_1}(y,\tau+ k\tau_1) =  V_{\tau_1}(y,\tau )\quad  \forall \tau > 0,
$$
This is valid for all integers $k\geq1$. It is not a trivial orbit, $ V_{\tau_1}\not\equiv 0$.
The next result examines the role of periodic solutions.

\medskip

\subsection{The fixed point is stationary}
The proof of the next lemma is similar to the arguments in \cite[Lemma 6.6]{FVV23}, but we give the details for reader's convenience  and to avoid confusions since the argument is long and delicate.

\begin{lemma}\label{lemma.periodicmeans}
Any periodic solution  of our renormalized problem, like $ V_{\tau_1}$, must be stationary in time. We will write $V_{\tau_1}(y,\tau)=F(y)$.
\end{lemma}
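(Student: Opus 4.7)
The plan is to follow the entropy-dissipation strategy from \cite[Lemma 6.6]{FVV23}: construct a Lyapunov functional $\mathcal{E}$ for the rescaled flow \eqref{APMs} which is non-increasing along trajectories, and whose associated dissipation vanishes precisely on stationary profiles. Once this is in place, periodicity of $V_{\tau_1}$ yields $\mathcal{E}(V_{\tau_1}(\cdot,\tau_1))=\mathcal{E}(V_{\tau_1}(\cdot,0))$, so the integrated dissipation over one period is zero; pointwise non-negativity then forces the dissipation to vanish at a.e.\ time. Combined with the $L^1$-continuity in $\tau$ coming from Theorem~\ref{EUWES}, this will upgrade to $\partial_\tau V_{\tau_1}\equiv 0$, hence $V_{\tau_1}(y,\tau)\equiv F(y)$.

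The candidate Lyapunov is the anisotropic version of the classical porous-medium entropy,
$$
\mathcal{E}(v):=\sum_{i=1}^{N}\frac{1}{m_i-1}\int_{\ren}v^{m_i}\,dy+\frac{\alpha}{2}\int_{\ren}\Big(\sum_{i=1}^{N}\sigma_i\,y_i^2\Big)\,v\,dy,
$$
whose formal variational derivative is $\sum_i q_i$, with $q_i=\frac{m_i}{m_i-1}v^{m_i-1}+\frac{\alpha\sigma_i}{2}y_i^2$. Both summands are finite on $\mathcal{K}$ by the uniform $L^\infty$ bound and the uniformly compact supports. The core computation is then to differentiate $\mathcal{E}$ along the flow, use $v_\tau=\sum_j\partial_{y_j}(v\,\partial_{y_j}q_j)$, and integrate by parts direction by direction to produce an identity of the form $\tfrac{d}{d\tau}\mathcal{E}(v(\tau))=-\mathcal{D}(v(\tau))$, with $\mathcal{D}\ge 0$ vanishing only when $v$ satisfies the stationary equation \eqref{StatEq}. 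All these manipulations will first be performed on the smooth strictly positive approximating solutions $v_\varepsilon$ from Section~\ref{ssec.approx}, and then transferred to the limit using the energy bounds of that section together with lower semicontinuity of $\mathcal{D}$.

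The delicate point is that the rescaled anisotropic PME is \emph{not} a Wasserstein gradient flow driven by a single scalar pressure, so integration by parts produces cross contributions of the form $\partial_{y_k}q_j$ with $k\neq j$ whose sign is a priori indefinite. Following the pattern of \cite[Lemma 6.6]{FVV23}, these cross terms are reabsorbed by rearranging the sum and exploiting both the identity $\sum_i\sigma_i=1$ and the precise values of $\alpha,\sigma_i$ from \eqref{alfa}--\eqref{ai}; these furnish the hidden algebraic cancellations that recast $\tfrac{d}{d\tau}\mathcal{E}$ as a sum of squared weighted gradients plus moment terms of the correct sign. This algebraic rearrangement is the long and delicate step announced in the paragraph preceding the statement. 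Once it is in place, $\mathcal{D}(V_{\tau_1})=0$ for a.e.\ $\tau$ means that $V_{\tau_1}(\cdot,\tau)$ is a weak solution of \eqref{StatEq} for a.e.\ $\tau$, and continuity in time upgrades this to every $\tau$, completing the proof.
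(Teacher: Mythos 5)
There is a genuine gap at the heart of your argument: the claimed entropy--dissipation identity is never established, and it is precisely the point that fails in the anisotropic setting. Writing the rescaled equation as $v_\tau=\sum_j\partial_{y_j}\bigl(v\,\partial_{y_j}q_j\bigr)$ with $q_j=\frac{m_j}{m_j-1}v^{m_j-1}+\frac{\alpha\sigma_j}{2}y_j^2$, your functional $\mathcal{E}$ has variational derivative $Q=\sum_i q_i$, so that formally
\begin{equation*}
\frac{d}{d\tau}\mathcal{E}(v(\tau))=-\sum_j\int_{\ren} v\,\partial_{y_j}Q\,\partial_{y_j}q_j\,dy
=-\sum_j\int_{\ren} v\,(\partial_{y_j}q_j)^2\,dy-\sum_j\sum_{i\neq j}\int_{\ren} v\,\partial_{y_j}\Bigl(\tfrac{m_i}{m_i-1}v^{m_i-1}\Bigr)\partial_{y_j}q_j\,dy .
\end{equation*}
The cross terms have no sign, and the identities $\sum_i\sigma_i=1$, \eqref{alfa}--\eqref{ai} do not make them cancel: when the $m_i$ are distinct the per-direction pressures $q_i$ are genuinely different functions of $v$, so the equation is not the gradient flow of any single functional of this form, and no such ``hidden algebraic cancellation'' is known (this is exactly why no explicit anisotropic Barenblatt profile is available). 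Even granting a sign, the equality case would be wrong: $\mathcal{D}=0$ in your scheme would force each flux $v\,\partial_{y_j}q_j$ to vanish separately, an over-determined system that is strictly stronger than the stationary equation \eqref{StatEq}, which only requires the sum of the divergences to vanish. Finally, your citation is off: \cite[Lemma 6.6]{FVV23} is not an entropy-dissipation argument.

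The paper proves the lemma by a completely different, softer mechanism that avoids any entropy. One compares the periodic solution $V_1$ with its time shift $V_2(y,\tau)=V_1(y,\tau+c)$ and uses the $L^1$-order contraction (Theorem \ref{contraction}) to see that $J[V_1,V_2](\tau)=\int(V_1-V_2)_+\,dy$ is nonincreasing; periodicity makes it constant. A mass bookkeeping with the solutions issued from $\max\{V_1,V_2\}$ and $\min\{V_1,V_2\}$ shows that constancy of $J$ at a positive value forces these to remain the exact pointwise max and min for all later times, and this is then excluded by the strong maximum principle, applied locally where the solutions are bounded away from zero (so the equation is uniformly parabolic), propagated along segments through the origin using the SSNI property and the star-shapedness of the positivity set, and finally using the free boundary of $\Omega_1$ to force $V_1\ge V_2$ everywhere, whence $V_1\equiv V_2$ by equal masses. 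If you want to salvage your approach you would have to produce a bona fide Lyapunov functional for the anisotropic rescaled flow with a signed dissipation characterizing \eqref{StatEq}; as written, that step is asserted rather than proved, and it is the whole difficulty.
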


\begin{proof} The proof follows the ideas of the proof of uniqueness in \cite[Subsection 6.1]{FVV23} . Thus, if $ V\nlc_1$ is periodic solution that is not stationary, then $ V\nlc_2(y,\tau)= V\nlc_1(y,\tau+ c)$ must be different from $ V\nlc_1$ for some $c>0$, and both have the same mass. With notations as above we consider the functional
 \begin{equation*}
 J[ V_1, V_2](\tau)=\int_{\mathbb{R}^N} ( V_1(x,\tau)- V_2(x,\tau))_+\,dx.
 \end{equation*}
 By the $L^1$-accretivity of the operator  this is a Lyapunov functional, \textit{i.e.}, it is nonnegative and nonincreasing in time.
 By the periodicity of $ V\nlc_1$ and $ V\nlc_2$, this functional must be periodic in time. Combining those properties we conclude that it is constant, say $C\ge 0$, and we have to decide whether $C$ is a positive constant or zero.  In case $C=0$ we conclude that $V_1=V_2$ and we are done.

We want eliminate the other option, $C>0$. We will prove that for two different solutions with the same mass this functional must be strictly decreasing in time. The main point is that such different solutions with the same mass must intersect. We define at a certain time, say $\tau=0$, the maximum of the two profiles $ V\nlc^*(y,0)=\max\{ V\nlc_1(y,0), V\nlc_2(y,0)\}$, and the minimum $ V\nlc_*(y,0)=\min\{ V\nlc_1(y,0), V\nlc_2(y,0)\}$. Let $ V\nlc^*(y,\tau)$ and $ V\nlc_*(y,\tau)$ the corresponding solutions for $\tau>0$. We have for every such $\tau>0$
  \begin{equation}\label{eq.order2}
 V\nlc_*(y,\tau)\le  V\nlc_1(y,\tau),  V\nlc_2(y,\tau)  \le  V\nlc^*(y,\tau).
\end{equation}
On the other hand, it easy to see by the definitions of $ V\nlc^*(0)$, $ V\nlc_*(0)$ that
$$
\int_{\mathbb{R}^N}  V\nlc^*(y,0)\,dy=M+ J[ V\nlc_1, V\nlc_2](0), \quad  \int_{{\mathbb{R}^N}}  V\nlc_*(y,0)\,dy=M- J[ V\nlc_1, V\nlc_2](0).
$$
Since $V\nlc^*(y,0)$ and $ V\nlc_*(y,0)$ are ordered, this difference of mass is conserved in time: for $\tau>0$
\begin{equation}
\int_{{\mathbb{R}^N}} ( V\nlc^*(y,\tau)- V\nlc_*(y,\tau))\,dy= 2J[ V\nlc_1, V\nlc_2](0).\label{eqJ2}
\end{equation}
Now, since $ V\nlc_{1},\, V\nlc_{2}$ have the same mass, \eqref{eq.order2} and \eqref{eqJ2} imply that for $\tau>0$
$$
\int_{\mathbb{R}^N} ( V\nlc_2(y,\tau)- V\nlc_1(y,\tau))_+\,dy=\int_{{\mathbb{R}^N}} ( V\nlc_1(y,\tau)- V\nlc_2(y,\tau))_+\,dy\le J[ V\nlc_1, V\nlc_2](0),
$$
but the constancy of  $J[ V\nlc_1, V\nlc_2]$ forces to have an equality, occurring only if the solution $ V\nlc^*(y,\tau)$ equals the maximum of the two solutions $ V\nlc_1$ and $ V\nlc_2$, and the solution $ V\nlc_*(y,\tau)$  equals the minimum of the two solutions.

\noindent $\bullet$ \textit{ Strong Maximum Principle.\nlc} Finally, we use the strong maximum principle (SMP for short) to show that the last conclusion is impossible in our setting.

i) We stress that $V_1,V_2$ and $V^*$ are periodic SSNI functions of some period $T_0$. Let us assume there at $y_0=0$ we have the option \ $V^*(0,T_0)=V_1(0,T_0)$. Since $V^*$ is SSNI and has positive mass it follows that $V^*(0,T_0)>0$.  By continuity (see Theorem 1 of \cite{H}) $V_1(y,\tau),V_2(y,\tau)>c>0$  in a neighbourhood $I(0)$ of $0$ for all $\tau\in(T_0-\delta, T_0+\delta)$ for a suitable $\delta>0$. Then we can prove locally smoothness for them because the equation is not degenerate (see  \cite[ Theorem 6.1, Chapter V]{LSU}) and as a consequence we can apply the evolution maximum principle (see \cite{Ni, PS2007}) in $ I(0)\times (T_0-\delta, T_0+\delta)$ for a suitable $\delta>0$. By our assumption $V^*(0,T_0)=V_1(0,T_0)$ and we known that $V^*(y,\tau)\geq V_1(y,\tau)$ in $I(0)\times (T_0-\delta, T_0+\delta)$, then  SMP implies $V^*(y,\tau)=V_1(y,\tau)$ in $I(0)\times (T_0-\delta, T_0+\delta)$.

ii) Suppose $y_0\ne 0$ and that $V^*(y_0,T_0)=V_1(y_0,T_0)>0$. We connect the point $y_0$ with $y=0$ by a segment. Since $V^*$ is SSNI we have $V^*(y,T_0),V_1(y,T_0)>0$ on this segment and its symmetric segment of endpoint $-y_0$ and $0$. We denote by $\gamma$ the union of these two segments and by continuity it follows that $V^*(y,\tau),V_1(y,\tau)>0$ for $y\in\gamma_\varepsilon:=\{y+\varepsilon \textbf{e}: y\in \gamma,\textbf{e}\in {\mathbb{S}}^{N-1}\}$ and $\tau\in(T_0-\delta, T_0+\delta)$ for some suitable $\varepsilon,\delta>0$. Then we can prove locally smoothness for them because the equation is not degenerate (see  \cite[ Theorem 6.1, Chapter V]{LSU}) and as a consequence we can apply the evolution maximum principle (see \cite{Ni, PS2007, Vsmp}) in $ \gamma_\varepsilon\times (T_0-\delta, T_0+\delta)$. By our assumption $V^*(y_0,T_0)=V_1(y_0,T_0)$ and we know that  $V^*(y,\tau)\geq V_1(y,\tau)$ in $\gamma_\varepsilon\times (T_0-\delta, T_0+\delta)$, then  SMP implies $V^*(y,\tau)=V_1(y,\tau)$ in $\gamma_\varepsilon\times (T_0-\delta, T_0+\delta)$.

iii) We stress that by the SSNI property the set $\Omega_1=\{y: V_1(y,T_0)>0\}$ is star-shaped set from the origin, \textit{ i.e.} for every $\bar y\in\Omega_1$ the segment from $y_0$ to $\bar y$, $y=\bar y+s(y_0-\bar y)$ with $s\in(0,1)$, belongs to $\Omega_1$. By this property of $\Omega_1$ and our previous argument in ii) we conclude that $V^*(y,T_0)=V_1(y,T_0)\geq V_2(y,T_0)$ for $y\in\Omega_1$.

iv) We have that $V_1(y,T_0)\ge V_2(y,T_0)$ on the boundary of the star-shaped set $\Omega_1$. Observing that  $V_1(y,T_0)=0$ on $\partial \Omega_1$, we must have  $V_2(y,T_0)=0$ on $\partial \Omega_1$. Using now the SSNI property of $V_2$ we get $V_2(y,T_0)=0$ outside on $\Omega_1$. Then $V_1\geq V_2$  in $\mathbb{R}^N$ and we conclude as in the previous case.
\end{proof}

{\bf Conclusion.}
A choice of profile $F(y)=V_{\tau_1}(y,\tau)$ provided by Lemma \ref{lemma.periodicmeans} and the corresponding solution
\[
U(x,t)=t^{-\alpha}F(t^{-a_1}x_1,..,t^{-a_N}x_N)
\]
to equation \eqref{APM} gives the existence of a self-similar profile of mass 1. If we wish to recover the result for any mass $M>0$, it is enough to use the scaling transformations described in Subsection \ref{sec scaling}. Finally, the continuity of the solutions like $F$  comes from \cite{H11}.
The $C^{\infty}$ regularity of $F$ inside the positivity set can be recovered from the  continuity itself. Indeed, $y$ is in the positivity set $\Omega(F)$, take a constant $c>0$ such that $F\geq c$ in a neighbourhood of $y$. Then there is locally an upper and lower bound for $F$, thus we can apply the classical quasilinear regularity theory. Hence, the proof of Theorem \ref{fundamental solution} is completed.

\subsection{Property of monotonicity with respect to the mass}\label{ssec.monotM}

The following monotonicity property of the self-similar profile $F_{M}$ with respect to the total mass  follows more directly than in the FDE case. It is an essential property used in the asymptotic behavior described in Section \ref{sec.asymp}.
\begin{proposition}\label{prop.mon.M} The profile $F_M$ is monotone increasing with respect the mass $M$: if $0<M_{1}<M_{2}$, then $F_{M_{1}}(y)\leq F_{M_{2}}(y)\quad \forall y\in \mathbb{R}^N$.
\end{proposition}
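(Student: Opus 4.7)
The plan is to deduce the monotonicity directly from the mass-changing scaling \eqref{Tk} of Subsection \ref{sec scaling}, the uniqueness part of Theorem \ref{fundamental solution}, and the SSNI property of the self-similar profiles. Given $0<M_1<M_2$, under hypothesis (H1) the exponent $\beta=1+N(\overline{m}-1)/2$ is strictly larger than $1$, so the equation $k^{\beta}M_1=M_2$ admits a unique solution $k=(M_2/M_1)^{1/\beta}>1$.

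First I would form the rescaled profile
$$\widetilde F(y):={\mathcal T}_k F_{M_1}(y)=k\,F_{M_1}(k^{-\nu_1}y_1,\ldots,k^{-\nu_N}y_N),$$
where $\nu_i=(m_i-1)/2>0$ by (H1). The invariance of equation \eqref{StatEq} under ${\mathcal T}_k$ ensures that $\widetilde F$ solves the same stationary profile equation, while the mass formula \eqref{Change mass} gives $\int \widetilde F\,dy=k^{\beta}M_1=M_2$. Moreover $\widetilde F$ is manifestly SSNI, since $F_{M_1}$ is and the componentwise dilation preserves symmetry and monotonicity in each $|y_i|$. Hence $\widetilde F$ falls in the class covered by the uniqueness statement of Theorem \ref{fundamental solution}, and so $\widetilde F\equiv F_{M_2}$.

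Finally I would exploit the SSNI property of $F_{M_1}$: since $k>1$ and each $\nu_i>0$, we have $|k^{-\nu_i}y_i|<|y_i|$ for every $y\ne 0$, and the separate monotonicity of $F_{M_1}$ in each $|y_i|$ implies
$$F_{M_1}(k^{-\nu_1}y_1,\ldots,k^{-\nu_N}y_N)\ge F_{M_1}(y).$$
Multiplying by $k>1$ yields
$$F_{M_2}(y)=\widetilde F(y)\ge k\,F_{M_1}(y)\ge F_{M_1}(y),$$
which is exactly the desired inequality. No serious obstacle is expected: the argument is a one-line consequence of uniqueness combined with the mass-changing scaling, and both factors $k>1$ and the inward contraction $|y|\to|k^{-\nu_i}y_i|$ push in the same direction thanks to SSNI. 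This explains the authors' remark that the monotonicity follows more directly than in the fast-diffusion case of \cite{FVV23}, where compact support is absent and the comparison of profiles of different masses requires a more delicate argument via approximation by ordered $L^1$ data and passage to the limit in Theorem \ref{contraction}.
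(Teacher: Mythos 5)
Your proof is correct and takes essentially the same route as the paper's: identify $F_{M_2}$ with ${\mathcal T}_k F_{M_1}$ (with $k^{\beta}M_1=M_2$, $k>1$) via uniqueness and the mass-changing scaling, then use the SSNI monotonicity of $F_{M_1}$ in each $|y_i|$ together with the factor $k>1$ to conclude $F_{M_2}(y)\ge k\,F_{M_1}(y)\ge F_{M_1}(y)$. The only difference is that you spell out the verification that ${\mathcal T}_k F_{M_1}$ is indeed the profile of mass $M_2$, a step the paper leaves implicit.
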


\noindent {\sl Proof.}
 Let us suppose $M_2>M_1>0$. 
By uniqueness of the profile of every mass (see Theorem \ref{fundamental solution}) and \eqref{Tk}, we have
$$
F_{M_2}(y)=k F_{M_1}(k^{-(m_1-1))/2}y_1,\cdots,k^{(m_N-1)/2}y_N)
$$
where $k>1$ is such that $M_2=M_1k^{\frac{N}{2}(\bar m-m_c)}$. Then $F_{M_2}(0)>F_{M_1}(0)$. Moreover, by the monotonicity properties of $F_M$ we also deduce from $k^{-(m_i-1))/2}<1$ that
$$
F_{M_1}(k^{-(m_1-1))/2}y_1,\cdots,k^{(m_N-1)/2}y_N)\ge F_{M_1}(y_1,\cdots,y_N).
$$
It follows that $F_{M_2}(y)\ge F_{M_1}(y)$ for all $y$.

\section{Support properties for $F_M$ and $U_M $}\label{ssec.support}

 We have proved that the self-similar profile $ F_M(y) $ has compact support in $\ren$, hence the fundamental solution $ U_M(x,t) $  has compact support for every fixed $t>0$. This is in complete contrast with the anisotropic diffusion equation in the fast regime (i.e., all $m_i$ less than 1) where the fundamental solution is everywhere positive, see \cite{FVV23}. This contrast was already well known in the isotropic case as the dichotomy between finite propagation and infinite speed of propagation.

Let us analyze a bit more precisely the property of finite propagation for the fundamental profile. Let $F_1(y)$ be the profile of mass $1$. By Theorem \ref{fundamental solution} we know that it is a continuous and nonnegative function in $\ren$, we also know that its positivity set is a bounded open subset $\Omega(F_1)$ of $\ren$. Finally,  $0\in \Omega(F_1)$, $\Omega(F_1)$ is connected and star-shaped around the origin, and $F_1$ is $C^{\infty}$ inside $\Omega(F_1)$. The closure of $\Omega(F_1)$ is called the support of $F_1$.

\subsection{Description of the support of $F_1$}

We proceed as follows. For any unit direction $\bf e\in \mathbb{S}^{N-1}$ we define the maximal length of $\Omega(F_1)$ in that direction
\begin{equation}\label{R1}
R_1({\bf e})=\sup\{r>0: \ r{\bf e}\in \Omega(F_1)\}.
\end{equation}
We already know that $R_1({\bf e})$ is a bounded function defined in $\mathbb{S}^{N-1}$ and it is also uniformly bounded away from 0.

\begin{proposition} \label{prop.R(e)} The  function  $R_1({\bf e})$ is positive and continuous on \ $\mathbb{S}^{N-1}$. It is Lipschitz continuous away from the coordinate frame (i.e. the set of points with at least one coordinate equal to zero)\normalcolor. Moreover, the set $\Gamma(F_1)$ described in polar coordinates as
\begin{equation}\label{Gamma}
\Gamma(F_1)=\{x=(r,{\bf e}): \  {\bf e}\in \mathbb{S}^{N-1}, \quad  r= R_1({\bf e}) \}
\end{equation}
is a continuous hypersurface, which coincides with the boundary of $\Omega(F_1)$ in $\ren$, an object called the \sl free boundary of $F_1$. The support of $F_1$, $S(F_1)$, is the disjoint union of $\Omega(F_1)$ and $\Gamma(F_1)$.
\end{proposition}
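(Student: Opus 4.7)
The plan is to exploit the SSNI property of $F_1$ together with the openness and star-shape of $\Omega(F_1)$: SSNI forces $F_1$ to be non-increasing along every ray from the origin, which turns $R_1$ into the radial distance to the zero set and makes it rigid enough to be continuous, and globally Lipschitz where no coordinate direction is degenerate. Upper semicontinuity is the only delicate point, and it is where SSNI is essential (without SSNI, a star-shaped open set can exhibit inward cusps along which USC fails).

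First I would record a few elementary consequences. Since $\Omega(F_1)$ is open and contains the origin, $B_{r_0}(0)\subset \Omega(F_1)$ for some $r_0>0$, giving $R_1(\mathbf{e})\ge r_0$ uniformly on $\mathbb{S}^{N-1}$. Iterating the coordinatewise SSNI monotonicity one index at a time yields $F_1(sx_1,\ldots,sx_N)\ge F_1(x_1,\ldots,x_N)$ for $0<s<1$, so $r\mapsto F_1(r\mathbf{e})$ is non-increasing along every ray. Combined with continuity of $F_1$, this identifies $\{r\ge 0:F_1(r\mathbf{e})>0\}=[0,R_1(\mathbf{e}))$ and forces $F_1(R_1(\mathbf{e})\mathbf{e})=0$.

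For continuity, lower semicontinuity is free: for $r<R_1(\mathbf{e})$, openness of $\Omega(F_1)$ gives $r\mathbf{e}'\in \Omega(F_1)$ for all $\mathbf{e}'$ close to $\mathbf{e}$, hence $R_1(\mathbf{e}')\ge r$. For upper semicontinuity, given $\mathbf{e}_n\to\mathbf{e}$ and $r<\liminf_n R_1(\mathbf{e}_n)$, the symmetry part of SSNI lets me assume all coordinates of $\mathbf{e}$ and $\mathbf{e}_n$ are nonnegative. Let $I_+=\{i:e_i>0\}$. Since $F_1(r\mathbf{e}_n)>0$ and SSNI permits zeroing out any coordinate $i\notin I_+$ without decreasing $F_1$, I obtain $F_1(\tilde y_n)>0$, where $\tilde y_{n,i}=re_{n,i}$ for $i\in I_+$ and $\tilde y_{n,i}=0$ otherwise. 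Choosing $t_n=r\min_{i\in I_+}(e_{n,i}/e_i)$ makes $t_n\mathbf{e}$ coordinatewise dominated in absolute value by $\tilde y_n$, so SSNI gives $F_1(t_n\mathbf{e})\ge F_1(\tilde y_n)>0$, and therefore $t_n\le R_1(\mathbf{e})$. Since $t_n\to r$, I conclude $R_1(\mathbf{e})\ge r$. Running the same comparison in both directions under the stronger assumption that all $e_i$ are nonzero produces the local estimate $|R_1(\mathbf{e})-R_1(\mathbf{e}')|\lesssim (\min_i|e_i|)^{-1}\,|\mathbf{e}-\mathbf{e}'|$, which degenerates precisely on the coordinate frame and thus gives the claimed Lipschitz regularity away from it.

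Finally, the set $\Gamma(F_1)$ is the polar graph of the positive continuous function $R_1$, hence homeomorphic to $\mathbb{S}^{N-1}$ and in particular a continuous hypersurface. If $y=r\mathbf{e}\in\partial\Omega(F_1)$ (necessarily $y\ne 0$), then $r<R_1(\mathbf{e})$ would force $y\in \Omega(F_1)$, and $r>R_1(\mathbf{e})$ is excluded by the LSC step applied to any approximating sequence $\Omega(F_1)\ni y_n\to y$; hence $r=R_1(\mathbf{e})$ and $y\in\Gamma(F_1)$. Conversely, $R_1(\mathbf{e})\mathbf{e}$ is approached by $s\mathbf{e}\in\Omega(F_1)$ as $s\uparrow R_1(\mathbf{e})$ but satisfies $F_1(R_1(\mathbf{e})\mathbf{e})=0$, so lies in $\partial\Omega(F_1)$. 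Consequently $S(F_1)=\overline{\Omega(F_1)}=\Omega(F_1)\cup\partial\Omega(F_1)=\Omega(F_1)\sqcup\Gamma(F_1)$, with disjointness following from openness of $\Omega(F_1)$. The main obstacle I expect is exactly the upper semicontinuity step at directions on the coordinate frame: the naive comparison blows up, and one needs the zeroing/rescaling construction above to transfer the positivity of $F_1$ at $r\mathbf{e}_n$ to positivity at a nearby point on the ray through $\mathbf{e}$.
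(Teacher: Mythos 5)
Your proof is correct and covers every assertion of the proposition, but it implements the key SSNI comparison differently from the paper. The paper argues geometrically at a point $x\in\Gamma(F_1)$ with all coordinates nonzero: SSNI puts the orthant-type cone $K_*(x)=\{y: y_i\ge x_i\ \forall i\}$ inside the zero set, giving a forward cone along the ray through $x$ that is forbidden to the free boundary, while a backward cone of positivity near $x$ is forbidden as well; sandwiching between the two cones yields $|r'-r|\le C|{\bf e}'-{\bf e}|$ with $C$ depending on the cone angles, and at the coordinate frame the paper keeps only the outgoing cone and appeals, without details, to ``a simpler argument'' for plain continuity. You replace the cone geometry by a direct coordinatewise rescaling comparison, $R_1({\bf e}')\ge R_1({\bf e})\min_i\left(|e_i|/|e'_i|\right)$, which buys an explicit local Lipschitz constant of order $\sup R_1/\min_i|e_i|$ away from the frame, and your zeroing-out of the vanishing coordinates supplies a complete upper-semicontinuity argument exactly on the coordinate frame, i.e.\ you actually write out the step the paper leaves unexplained; the boundary identification and the decomposition $S(F_1)=\Omega(F_1)\sqcup\Gamma(F_1)$ then follow as in the paper. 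Two cosmetic points: your USC step, as literally written, bounds $\liminf_n R_1({\bf e}_n)$ by $R_1({\bf e})$; to conclude USC apply the identical argument along a subsequence realizing the limsup. Also, in showing that a boundary point $r{\bf e}$ cannot have $r>R_1({\bf e})$, what you use is the upper semicontinuity (or the already established continuity) of $R_1$ applied to an approximating sequence in $\Omega(F_1)$, not the LSC step as you labelled it; since full continuity is in hand, this is harmless.
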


\noindent {\sl Proof.} (1) The proof is easy for the points $x\in \Gamma(F_1)$ with all coordinates different from zero. By symmetry we may assume that $x$ has only positive  coordinates, $x=(x_i)$ with $x_i>0$  for all $i$. We draw the line ${\bf L}(x)$ passing through $0$ and $x$. We known that for points $\lambda \, x$ with $\lambda\ge 1$ we have $F_1(\lambda\, x)=0$.
{We point out that by the SSNI monotonicity properties of $F_1$, the cone}
$$
K_*(x)=\{y\in \ren : \ y_i\ge x_i \quad \forall i\}.
$$
{is completely contained in $\R^{N}\setminus \Omega(F_{1})$, since $F_{1}$ is monotone nonincreasing along every straight line that starts at $x$ and enters $K_*(x)$. Thus $F_1(y)=0$ for all $y\in K_*(x)$.}
A simple inspection of the geometry shows that there exists an angle $\theta(x)>0$ such that $F_1(y)=0$ in a half cone  with vertex at $x$, directrix line $\bf L$ and angle $\theta(x)$, and pointing forward, let us call it $K_+(x,{\bf L}(x), \theta(x))$. The inside of this cone is forbidden for all points of $\Gamma(F_1)$ (by definition of this set as a limits  of points where $F_1$  has  positive values). Next we consider the opposed region
$$
K_-(x)=\{y\in \ren : \ x_i/2 < y_i< x_i \quad \forall i\}
$$
and conclude that $F_1(y)>0$ inside  that region, so that there exists an angle $\theta'(x)>0$  such that $F_1(y)>0$ in a partial  cone $K_-(x,\bf L, \theta'(x))$  with vertex at $x$, directrix line ${\bf L}(x)$ and angle $\theta'(x)$, and pointing backwards. This region is necessarily contained in $\Omega(F_1)$, so it is forbidden for points of $\Gamma(F_1)$.

The combination of both bounds implies that for points of $\Gamma(F_1)$ near $x$, of the form $y=(r',{\bf e'})$  with $r'=R_1({\bf e'})$ and ${\bf e'}$ close to ${\bf e}$, we have
\begin{equation}\label{LIps}
|r'-r|\le C|{\bf e'}-{\bf e}|
\end{equation}
and the constant $C$ depends on the previous angles  $\theta(x), \theta'(x)>0$ that in turn depend continuously on the base point $x$.

(2) The rest of points $x$ with at least one coordinate equal to zero is what we call the coordinate frame. It includes in particular all the coordinate axes and also $k$-planes spanned by several axes. On that  set we can still find a positive angle in the outgoing direction, which produces an upper control for $ |r'-r|$ like in \eqref{LIps}, but not the corresponding lower bound. However, we can at least  prove that at such points $R_1({\bf e})$ is at least continuous by a simpler argument using the continuity and monotonicity of $F_1$.

(3) It is then immediate to see that $\Gamma(F_1)$  is the topological boundary of $\Omega(F_1)$, i.e., the free boundary (of course, $\Gamma(F_1)$  is also the  boundary of the support $S(F_1)$). We have shown that it can be parametrized in polar coordinates in a continuous way.  \qed

\begin{corollary} \label{coroll.OF} There are positive constants $c_1,\cdots, c_N$ depending only on the parameters of the problem such that $\Omega(F_1)$ is contained in the box
$$
\Omega(F_1) \subset [-c_1,c_1] \times  \cdots    \times [-c_N,c_N].
$$
\end{corollary}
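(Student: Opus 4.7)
My plan would be to exploit two properties of the profile $F_1$ that are already in hand: its positivity set $\Omega(F_1)$ is a bounded open subset of $\ren$ containing the origin (Theorem \ref{fundamental solution}), and $F_1$ is SSNI. The natural choice is the axis-aligned bounding box of $\Omega(F_1)$: for each $i=1,\ldots,N$ I would simply set
\[
c_i := \sup\bigl\{\,|y_i| : y = (y_1,\ldots,y_N) \in \Omega(F_1)\,\bigr\}.
\]
Boundedness of $\Omega(F_1)$ makes each $c_i$ finite, while $0\in\Omega(F_1)$ together with the openness of $\Omega(F_1)$ force $c_i>0$. The inclusion $\Omega(F_1)\subset[-c_1,c_1]\times\cdots\times[-c_N,c_N]$ is then tautological from the definition.

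The statement also requires the $c_i$ to depend only on the parameters $N, m_1,\ldots,m_N$. For this I would invoke the uniqueness of the self-similar profile of unit mass (Theorem \ref{fundamental solution}): since $F_1$ is intrinsically attached to those parameters, so is $\Omega(F_1)$ and hence so are its axis extents $c_i$. Equivalently, using the polar representation from Proposition \ref{prop.R(e)} and writing $y=r\mathbf{e}$, one has
\[
c_i \;=\; \max_{\mathbf{e}\in\mathbb{S}^{N-1}} R_1(\mathbf{e})\,|\mathbf{e}\cdot \mathbf{u}_i|,
\]
where $\mathbf{u}_i$ denotes the $i$-th standard basis vector and the maximum is attained by continuity of $R_1$ on the compact sphere.

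I do not see a serious obstacle: the corollary essentially repackages the compact-support and SSNI information from the previous results in a form convenient for later applications (in particular for Section~\ref{sec.asymp} and the free-boundary analysis of Section~\ref{sec.exr.supp}). If a quantitative a priori bound were desired instead of the abstract existence above, I would recycle the barrier argument of Subsection~\ref{sec.upper} direction by direction: the one-dimensional travelling-wave supersolution $\overline{u}$ constructed there, combined with the scaling reduction to $M=1$ from Subsection~\ref{sec scaling}, produces explicit values of the $c_i$ depending only on $N$ and $m_1,\ldots,m_N$.
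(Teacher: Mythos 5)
Your proposal is correct and follows essentially the same route as the paper, which treats the corollary as an immediate consequence of the boundedness of $\Omega(F_1)$ (equivalently of $R_1(\mathbf{e})$ from Proposition \ref{prop.R(e)}) together with the uniqueness of the unit-mass profile. The only extra point the paper records is that, by the SSNI monotonicity, the optimal constants are attained on the coordinate axes, i.e.\ $c_i=R_1(\mathbf{u}_i)$, a sharpening of your formula $c_i=\max_{\mathbf{e}}R_1(\mathbf{e})\,|\mathbf{e}\cdot\mathbf{u}_i|$ that is not needed for the stated inclusion.
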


 In the previous Corollary  the optimal constants  are attained on the coordinate axes by the monotonicity properties of $F_1$.


\subsection{Expansions }

We will see in Section \ref{sec.exr.supp} that the property of finite support for the fundamental solution can be extended to every solution with an initial datum that is nonnegative, bounded and compactly supported. This is what we call finite propagation property generally speaking.
In that section we will need some extra details about modifications of $\Omega(F_1)$ and $\Gamma(F_1)$ that we present next.
The problem comes from the fact that is the anisotropic case $\Omega(F_1)$ and its modifications will be a kind of ``distorted balls''
and the different elongations have to be taken into account.

(1) We first consider the linear expansion of $\Omega(F_1)$ with parameter $\lambda>0$ defined as
$$
E_{\lambda}(\Omega(F_1))=\{z=\lambda y, \ y\in   \Omega(F_1) \}.
$$
In the same way we define $E_{\lambda} (\Gamma(F_1))$. For $\lambda>1$, since $F_{1}$ is SSNI we have $ \Omega(F_1) \subset E_{\lambda}(\Omega(F_1))$, with strict inclusion. Besides we have $E_{\lambda}(\Gamma(F_1))$ encloses $\Gamma(F_1)$ with strict separation.

\medskip

(2) Let us compare the supports of the profiles $F_M$ for different masses, say $M=M_1$  and $M=1$.
We get 
$$
\Omega(F_{M_1})=\{(z_1,\cdots,z_N):  z_i =k^{\nu_i}y_i  \quad  \forall i, \  \mbox{where} \ y\in   \Omega(F_1) \}.
$$
Here $\nu_i=(m_i-1)/2>0$, $\beta=1+\sum_{i=1}^N\nu_i\normalcolor>0$ as in \eqref{beta}, and the parameter $k$ is related to the change of masses by $k^{\beta}=M_1$.
If $M_1>1$ this an anisotropic expansion of $\Omega(F_1)$ along all outgoing directions from the origin that we call
\begin{equation}\label{S1k}
{\mathcal S}_{k}^{(1)}(\Omega(F_1))=\Omega(F_{M_1}),
\end{equation}
 the letter ${\mathcal S}$ recalling that it is a scaling. It is very easy to see that these sets form a monotone family of sets, increasing with $k$, that cover the whole space $\ren$. If $M_1<1$ the family contracts towards the origin in a continuous way.

The same relationship applies to the free boundaries
$$
\Gamma(F_{M_1})=\{(z_1,\cdots,z_N):  z_i =k^{\nu_i}y_i \quad  \forall i, \  \mbox{where} \ y\in   \Gamma(F_1) \}.
$$

\medskip

(3) The situation of the fundamental solution $U(x,t)$ solution as $t$ moves from $t=1$ is described as follows.
The positivity set of the fundamental solution $U_1(x,t)$ at time $t>1$ is an anisotropic expansion of $\Omega(F)$ of the form
$$
\Omega(U_1,t)=\{x=( x_1,\cdots,x_N): \ (x_1\,t^{-\alpha \sigma_1},\cdots,x_N\,t^{-\alpha \sigma_N})\in \Omega(F_1)\}.
$$
It looks like the anisotropic expansion of point (2), but now the parameter $t$ is raised to a different set of anisotropic exponents, $\{a_i=\alpha \sigma_i>0\}$.
We call this expansion
\begin{equation}\label{S2t}
\mathcal{S}_{t}^{(2)}(\Omega(F_1))=\Omega(U_1,t).
\end{equation}
If $0<t<1$ the family contracts towards the origin in a continuous way.
The same relationship applies to the free boundaries.

\begin{corollary} \label{coroll.O U} There are positive constants $c_1,\cdots, c_N$ depending only on the parameters of the problem such that $\Omega(U_1,t)$ is contained in the box
$$
\Omega(U_1,t) \subset \Pi_{i=1}^N [-c_i\,t^{\alpha\sigma_i}\,,c_i t^{\alpha\sigma_i}].
$$
In the previous Corollary   the optimal constants are taken on the coordinate axes  and  the exponents in time are sharp.
\end{corollary}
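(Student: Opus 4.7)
The plan is to read off the corollary directly from the preceding Corollary \ref{coroll.OF} via the explicit self-similar representation of $U_1$ given in \eqref{sss}. Since
\[
U_1(x,t)=t^{-\alpha}F_1(t^{-\alpha\sigma_1}x_1,\ldots,t^{-\alpha\sigma_N}x_N),
\]
the positivity set satisfies the equivalence
\[
x\in \Omega(U_1,t)\iff (t^{-\alpha\sigma_1}x_1,\ldots,t^{-\alpha\sigma_N}x_N)\in \Omega(F_1),
\]
which is exactly the anisotropic dilation recorded in \eqref{S2t}. No new analysis is needed; the work was done when we established the compact support of the profile and its coordinatewise SSNI monotonicity.

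First, I would invoke Corollary \ref{coroll.OF} to produce constants $c_1,\ldots,c_N>0$, depending only on $N$ and the exponents $m_i$, such that $\Omega(F_1)\subset\prod_{i=1}^N[-c_i,c_i]$. Then for any $x\in \Omega(U_1,t)$, the equivalence above forces $|t^{-\alpha\sigma_i}x_i|\le c_i$ for every $i$, i.e. $|x_i|\le c_i\,t^{\alpha\sigma_i}$, which yields the claimed box inclusion. This is the entire argument for the inclusion part.

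For the sharpness assertion I would argue in two pieces. On the one hand, the optimality of the constants $c_i$ on the coordinate axes for $F_1$ is already part of the remark following Corollary \ref{coroll.OF}: by the SSNI property of $F_1$ (Theorem \ref{fundamental solution} together with Proposition \ref{prop.R(e)}), the maximal extension of $\Omega(F_1)$ in the direction $e_i$ is attained on the $x_i$-axis, so the box is tight there. Under the anisotropic dilation $y_i\mapsto t^{\alpha\sigma_i}y_i$, this extremal point on the $i$-th axis is sent to the point $c_i\,t^{\alpha\sigma_i}\,e_i$, preserving tightness of the box for $\Omega(U_1,t)$ on the coordinate axes. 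On the other hand, the sharpness of the time exponents $\alpha\sigma_i$ is a tautological consequence of the equivalence $\Omega(U_1,t)=\mathcal{S}_t^{(2)}(\Omega(F_1))$: any other exponent $\beta_i<\alpha\sigma_i$ would fail to contain the extremal point $c_i\,t^{\alpha\sigma_i}e_i\in \Omega(U_1,t)$ for large $t$, while any exponent $\beta_i>\alpha\sigma_i$ would contradict minimality of the box. The only possible difficulty is a careful check that Corollary \ref{coroll.OF} genuinely records sharpness on the axes rather than merely an outer bound, but this is built into the construction of $R_1(\mathbf e)$ in \eqref{R1} and thus no further work is required. Hence Theorem \ref{fundamental solution} and Corollary \ref{coroll.OF} give the result at once.
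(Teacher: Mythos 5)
Your argument is correct and is exactly the route the paper intends: the corollary is an immediate consequence of the scaling relation $\Omega(U_1,t)=\mathcal{S}_t^{(2)}(\Omega(F_1))$ coming from the self-similar form \eqref{sss}, combined with the box bound of Corollary \ref{coroll.OF} and the SSNI-based optimality of the constants on the coordinate axes, which transfers under the anisotropic dilation $y_i\mapsto t^{\alpha\sigma_i}y_i$ and gives the sharpness of both the constants and the time exponents.
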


\medskip

To end this part we point out that these two different expansions have produced different kinds of ``distorted balls'' but they are mutually comparable. Below we will need this fact for parameters next to 1.

\begin{proposition} \label{prop.setequiv} Let $\ve>0 $ be close to 0. There exist positive constants $c_1$ and $c_2$ such that
\begin{equation}\label{sest.comp1}
E_{1+ c_1 \ve  }(\Omega(F_1)) \subset {\mathcal S}_{1+\ve}^{(1)}(\Omega(F_1)) \subset E_{1+ c_2\ve  }(\Omega(F_1)),
\end{equation}
 where ${\mathcal S}_{k}^{(1)}$ is defined in \eqref{S1k}. There also exist positive constants $c_3$ and $c_4$ such that
\begin{equation}\label{sest.comp1}
E_{1+ c_3\ve }(\Omega(F_1)) \subset {\mathcal S}_{1+\ve}^{(2)}(\Omega(F_1)) \subset E_{1+ c_4\ve }(\Omega(F_1)),
\end{equation}
 where ${\mathcal S}_{t}^{(2)}$ is defined in \eqref{S2t}.
\end{proposition}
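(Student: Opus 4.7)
The plan is to reduce both statements to a simple containment principle furnished by Theorem \ref{fundamental solution}: since $F_1$ is SSNI, one has $y\in\Omega(F_1)$ and $|y_i'|\le|y_i|$ for every $i$ $\Rightarrow$ $y'\in\Omega(F_1)$. Note that under (H1)–(H2) the exponents $\nu_i=(m_i-1)/2$ and $a_i=\alpha\sigma_i$ are all strictly positive, so I will set $\nu_{\min}=\min_i\nu_i$, $\nu_{\max}=\max_i\nu_i$, $a_{\min}=\min_i a_i$, $a_{\max}=\max_i a_i$, all $>0$. For $\varepsilon$ close to $0$ the Taylor expansion $(1+\varepsilon)^{\mu}=1+\mu\varepsilon+O(\varepsilon^2)$ gives, uniformly for $\mu$ in any bounded range and some $\varepsilon_0>0$,
\[
1+\tfrac12\mu\,\varepsilon\;\le\;(1+\varepsilon)^{\mu}\;\le\;1+2\mu\,\varepsilon\qquad \text{for }0<\varepsilon<\varepsilon_0 ,
\]
which is the only analytic input I shall need.

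For the first inclusion in \eqref{sest.comp1} take $z=(1+c_1\varepsilon)y$ with $y\in\Omega(F_1)$ and look for $y'\in\Omega(F_1)$ satisfying $z_i=(1+\varepsilon)^{\nu_i}y_i'$, i.e.\ $y_i'=(1+c_1\varepsilon)(1+\varepsilon)^{-\nu_i}y_i$. By the SSNI containment principle it suffices to ensure $(1+c_1\varepsilon)\le (1+\varepsilon)^{\nu_i}$ for every $i$; the lower Taylor bound gives $(1+\varepsilon)^{\nu_i}\ge 1+\tfrac12\nu_{\min}\varepsilon$, so the choice $c_1=\tfrac12\nu_{\min}$ works. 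For the second inclusion reverse the roles: given $z_i=(1+\varepsilon)^{\nu_i}y_i\in \mathcal S_{1+\varepsilon}^{(1)}(\Omega(F_1))$, set $y_i'=z_i/(1+c_2\varepsilon)$; the containment principle demands $(1+\varepsilon)^{\nu_i}\le 1+c_2\varepsilon$ for every $i$, and the upper Taylor bound makes $c_2=2\nu_{\max}$ admissible.

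The argument for the second chain of inclusions is identical, verbatim, with $\nu_i$ replaced everywhere by $a_i=\alpha\sigma_i$: one sets $c_3=\tfrac12 a_{\min}$ and $c_4=2a_{\max}$, and the same SSNI containment combined with the two Taylor bounds closes the proof.

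I do not foresee any serious obstacle: the only point to be slightly careful about is the uniformity of the Taylor inequalities across the $N$ directions, which is automatic because the index set is finite and all exponents are bounded away from $0$. In fact, with only marginal extra work one can obtain quantitative estimates valid on an explicit interval $(0,\varepsilon_0)$ and even for finite (not only infinitesimal) perturbations, by replacing the crude linear bounds by sharper ones; but for the qualitative statement of Proposition \ref{prop.setequiv} near $\varepsilon=0$ the above suffices.
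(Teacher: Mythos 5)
Your proof is correct and follows essentially the same route as the paper: both reduce the statement to comparing the isotropic factor $1+c\,\ve$ with the linearized anisotropic factors $(1+\ve)^{\nu_i}\sim 1+\nu_i\ve$ (resp.\ $(1+\ve)^{\alpha\sigma_i}$) and choose $c$ below $\min_i\nu_i$ or above $\max_i\nu_i$, relying on the monotone geometry of $\Omega(F_1)$. The only difference is cosmetic: you make the containment step explicit through the SSNI coordinate-wise monotonicity of $F_1$, whereas the paper phrases it through the radial function $R_1({\bf e})$, and your explicit constants $c_1=\tfrac12\nu_{\min}$, $c_2=2\nu_{\max}$ (and the analogues with $a_i=\alpha\sigma_i$) are admissible.
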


The proof of the first inequality relies  on the observation that the expansion $E_{1+ c_1\ve }$ increases the value of the radius $R_1({\bf e})$ by the factor ${1+ c_1\ve }$ in all directions, while the scaling ${\mathcal S}_{1+\ve}^{(1)}$ multiplies the $i$-th \normalcolor coordinate of  $R_1({\bf e})\,\bf e$ by  the factor $B_i=k^{\nu_i}$ (using the previous notations). If $k=1+\varepsilon$, then we get $B_i(\ve)\sim 1+ \nu_i \ve$ and we need to take
$$
c_1< \min\{\nu_i: 1 \le i \le N\}.
$$
Note that then $M_1=k^{\beta}\sim 1+\beta \ve$. The rest of the inequalities are similar. \qed

Note: We will use these results for factors near 1 but the argument applies also for bounded factors $k$ and $t$. We leave the details to the reader.


\section{Asymptotic behaviour}\label{sec.asymp}

Here we establish the asymptotic behaviour of finite mass solutions, another goal of this paper.

\begin{theorem}\label{thmasympto.cs}  Assume the restrictions (H1) and (H2).
Let $u(x,t)$ be the unique solution  of the Cauchy problem for equation \eqref{APM} with nonnegative initial datum  $u_{0}\in L^{1}(\R^{N})$. Let $U_{M}$ be the unique self-similar fundamental   solution with the same mass as $u_{0}$. Then,
\begin{equation}\label{L1conv}
\lim_{t\rightarrow\infty}\|u(t)-U_{M}(t)\|_{1}=\lim_{\tau\rightarrow\infty}\|v(\tau)-F_{M}\|_{1}=0.
\end{equation}
The convergence also holds in all the $L^{p}$ norms, $1< p  <  \infty$,  in the proper scale:
\begin{equation}\label{Lpconv}
\lim_{t\rightarrow\infty}t^{\frac{(p-1)\alpha}{p}}\|u(t)-U_{M}(t)\|_{p}=\lim_{\tau\rightarrow\infty}\|v(\tau)-F_{M}\|_{p}=0,
\end{equation}
where $\alpha=N/(N(\overline{m}-1)+2)$ is the constant in \eqref{alfa}.
\end{theorem}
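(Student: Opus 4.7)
The plan is to work in the self-similar variables \eqref{NewVariables}, where the claim reduces to $v(\cdot,\tau)\to F_M$ in $L^1(\mathbb{R}^N)$. I would first reduce to bounded, compactly supported data: approximate $u_0$ in $L^1$ by nonnegative $u_0^k\in L^1\cap L^\infty$ of compact support with masses $M_k\to M$. The contraction estimate of Theorem \ref{contraction} gives $\|u(t)-u^k(t)\|_1\le \|u_0-u_0^k\|_1$ uniformly in $t$, and the scaling \eqref{Tk}--\eqref{Change mass} makes $M\mapsto U_M(\cdot,t)$ continuous in $L^1$, so a triangle inequality reduces \eqref{L1conv} to the case of nice data.

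\textbf{Compactness of the rescaled orbit.} For bounded, compactly supported $u_0$, I would establish relative compactness of $\{v(\cdot,\tau):\tau\ge 1\}$ in $L^1(\mathbb{R}^N)$ via three ingredients: (i) a uniform $L^\infty$ bound, obtained by translating Theorem \ref{L1LI} into $\|v(\cdot,\tau)\|_\infty\le C(M,N,m_i)$; (ii) \emph{tail control} by the travelling-wave barrier construction of Section \ref{sec.upper}, which shows $\operatorname{supp} v(\cdot,\tau)\subset Q(R)$ for all $\tau\ge\tau_1$ and some fixed $R$ (the SSNI hypothesis is not actually used in the barrier step, only boundedness and compact support of the data); (iii) spatial $L^1$ equi-continuity, obtained by applying the energy estimate \eqref{Energywholespace} (with $p_i$ chosen so that $q=(p_i+m_i)/2$ is large) at some intermediate time $\tilde\tau\in(\tau,\tau+1)$, combined with Fr\'echet--Kolmogorov as in Lemma \ref{lemma.precomp} and the propagation provided by the $L^1$-contraction applied to $v(\cdot,\tau)$ and its translates $v(\cdot+z,\tau)$. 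Together with $L^1$ equi-continuity in $\tau$ from Theorem \ref{contraction}, these yield precompactness of the orbit in $C_{\mathrm{loc}}([1,\infty);L^1(\mathbb{R}^N))$.

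\textbf{Identification of the limit.} Define $\omega(u_0)$ as the set of $L^1$-subsequential limits of $v(\cdot,\tau_n)$ along $\tau_n\to\infty$. The previous step makes $\omega(u_0)$ nonempty, compact, connected, and invariant under the rescaled semigroup $S_\tau$, and every $V_*\in\omega(u_0)$ has mass $M$ by conservation and tightness. The functional
\[
J(\tau):=\int_{\mathbb{R}^N}\bigl(v(y,\tau)-F_M(y)\bigr)_+\,dy
\]
is non-increasing by Theorem \ref{contraction}, bounded below and continuous on $L^1$, so it attains a constant value $J_\infty$ on $\omega(u_0)$. Hence along any orbit $V(\cdot,\tau)=S_\tau V_*$ one has $\int (V(\tau)-F_M)_+\,dy\equiv J_\infty$. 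I would now transfer the mass-difference/strong-maximum-principle argument of Lemma \ref{lemma.periodicmeans}: setting $W^*(\tau):=\max(V(\tau),F_M)$ and $W_*(\tau):=\min(V(\tau),F_M)$, the constancy of $J$ forces $W^*$ and $W_*$ to coincide a.e.\ with the rescaled evolutions issued from $\max(V_*,F_M)$ and $\min(V_*,F_M)$; then SMP applied on the star-shaped positivity set of $F_M$ (invoking uniqueness of the stationary profile, Theorem \ref{fundamental solution}) gives $V(\tau)\equiv F_M$. Therefore $\omega(u_0)=\{F_M\}$, which is exactly \eqref{L1conv}.

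\textbf{$L^p$ convergence and main obstacle.} The $L^p$ statement \eqref{Lpconv} for $1<p<\infty$ follows by interpolation from \eqref{L1conv} and the uniform bound $\|v(\tau)\|_\infty+\|F_M\|_\infty\le C$, using $\|v(\tau)-F_M\|_p\le \|v(\tau)-F_M\|_1^{1/p}(2C)^{(p-1)/p}$, and reverting to $(x,t)$ via the Jacobian identity $\|u(t)-U_M(t)\|_p=t^{-(p-1)\alpha/p}\|v(\tau)-F_M\|_p$, which is valid because $\sum_i\sigma_i=1$. The main obstacle is the identification step when the initial datum is not SSNI, since the SMP argument of Lemma \ref{lemma.periodicmeans} ultimately leans on the star-shaped structure of $\{F_M>0\}$. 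The natural remedy is to first prove convergence under SSNI data (where the argument applies verbatim thanks to Proposition \ref{Prop 3}) and then pass to arbitrary $u_0\in L^1_+(\mathbb{R}^N)$ by sandwiching between SSNI majorants and minorants of nearby mass, using again $L^1$-contraction.
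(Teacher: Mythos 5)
Your overall architecture (rescaled orbit compactness, a Lyapunov functional, interpolation for $L^p$, density in $L^1$ for rough data) is reasonable, but it diverges from the paper at the crucial identification step, and there it has a genuine gap. The mass-difference/strong-maximum-principle argument of Lemma \ref{lemma.periodicmeans} that you want to transfer to the pair $(V,F_M)$ uses the SSNI structure of \emph{both} competitors: the star-shapedness of the positivity set of each function, the symmetry needed to propagate positivity along segments through the origin, and, in its last step, the SSNI property of the second solution to conclude that it vanishes outside the positivity set of the first. For a general initial datum the $\omega$-limit element $V$ is not known to be SSNI (Proposition \ref{Prop 3} only propagates SSNI when $u_0$ has it), so the argument applies, as you say, only to SSNI data. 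The proposed remedy --- sandwiching $u_0$ between SSNI minorants and majorants ``of nearby mass'' --- cannot work: an SSNI function attains its maximum at the origin, so if $u_0$ vanishes in a neighbourhood of the origin (data supported in an annulus, or concentrated near a point $x_0\neq 0$) its only SSNI minorant is the zero function, and SSNI majorants in general have mass far above $M$. Hence the sandwich only reproduces the crude two-sided bounds $U_{K_1}(x-x_0,t)\le u\le U_{K_2}(x,t+1)$ of Proposition \ref{thmasympto.prel} with $K_1\ll M\ll K_2$, which do not identify the limit: ordering against $F_{K_1}$ and $F_{K_2}$ plus $L^1$-contraction gives no squeeze because the masses do not converge to $M$. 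This is exactly why the paper does not impose any symmetry at this stage: it runs the Kamin--V\'azquez four-step method on the scaling family $u_\lambda(x,t)=\lambda^\alpha u(\lambda^{\sigma_1\alpha}x_1,\dots,\lambda^{\sigma_N\alpha}x_N,\lambda t)$, obtains compactness in $L^1_{loc}$, shows the limit is a fundamental solution whose mass is pinned to $M$ by the compactly supported supersolution $U_{M'}(x,t+1/\lambda)$, and then identifies it with $U_M$ via the uniqueness of the self-similar fundamental solution (Theorem \ref{fundamental solution}) together with the Lyapunov argument relative to $U_M$ from \cite{Vlibro}; the density step for general $L^1$ data comes only afterwards, using $L^1$-contraction and the mass monotonicity of Proposition \ref{prop.mon.M}, just as in your first reduction.

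Two smaller points. First, a single application of the travelling-wave barrier of Section \ref{sec.upper} only bounds the support of $u(\cdot,t)$ linearly in $t$, which after rescaling grows like $t^{1-\alpha\sigma_i}$, not by a fixed box; to get $\operatorname{supp}v(\cdot,\tau)\subset Q(R)$ for \emph{all} large $\tau$ you must iterate the one-step argument (re-choosing the wave speed as $\|u(\cdot,t)\|_\infty$ decays), or, more simply, invoke the comparison with a large fundamental solution exactly as in Proposition \ref{thmasympto.prel}; this is fixable but not automatic as stated. Second, for SSNI data your LaSalle/$\omega$-limit identification is plausible and close in spirit to the uniqueness argument of \cite{FVV23} used by the paper, and your $L^p$ step by interpolation and the Jacobian identity is correct; the unrepaired part is solely the passage from SSNI to arbitrary nonnegative $L^1$ data, which needs the paper's (or an equivalent) symmetry-free identification mechanism.
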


Solution is understood in the sense of limit solutions, see Section \ref{ssec.prelim2}.

 To prove this result we proceed in two steps, first for bounded and compactly supported data, then for general integrable data\normalcolor.

\subsection{Bounded and compactly supported data}\label{sec.asymp1}

We consider  first solutions with  bounded and  compactly supported data.

\begin{proposition}\label{thmasympto.prel}  Assume the restrictions (H1) and (H2).
Let $u(x,t)$ be the unique solution  of the Cauchy problem for equation \eqref{APM} with nonnegative initial  datum  $u_{0}\in L^{1}(\R^{N})$. We also assume that $u_0$ is bounded and compactly supported in a ball of radius $R_0$. Let $U_{M}$ be the unique self-similar fundamental   solution with the same mass as $u_{0}$. Then, there are positive constants $K_1$ and  $K_2$ and a point $x_0\in\ren$ such that for every $t\ge 1$ we have
\begin{equation}\label{L1conv}
 U_{K_1}(x-x_0,t)\le u(x,t)\le U_{K_2}(x,t+1)\,.
\end{equation}
\end{proposition}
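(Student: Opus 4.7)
The plan is to sandwich $u$ between two constructed self-similar fundamental solutions by a two-sided application of the $L^{1}$-comparison principle (Theorem \ref{contraction}) at carefully chosen initial instants, exploiting the mass-scaling formula \eqref{Tk}--\eqref{beta} for the profile $F_{M}$.

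\textbf{Upper bound.} Set $L := \|u_{0}\|_{\infty}$; it suffices to choose $K_{2}$ so that $F_{K_{2}}(x) \ge L$ on the ball $B_{R_{0}}(0)$ that contains $\mathrm{supp}(u_{0})$, since outside $B_{R_{0}}$ one has $u_{0}=0\le F_{K_{2}}$ automatically. By \eqref{Tk}--\eqref{beta},
\[
F_{K_{2}}(y) = k\, F_{1}\bigl(k^{-\nu_{1}} y_{1},\ldots,k^{-\nu_{N}} y_{N}\bigr),\qquad k^{\beta}=K_{2},
\]
and since $F_{1}(0)>0$ (by Theorem \ref{fundamental solution}), as $K_{2}\to\infty$ the function $F_{K_{2}}$ tends to $+\infty$ uniformly on the compact set $B_{R_{0}}(0)$. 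Fixing such a $K_{2}$, the shifted function $\widetilde U(x,t):=U_{K_{2}}(x,t+1)$ is a constructed solution with initial datum $F_{K_{2}}\ge u_{0}$, and Theorem \ref{contraction} gives $u(x,t)\le U_{K_{2}}(x,t+1)$ for every $t\ge 0$.

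\textbf{Lower bound.} Conservation of mass gives $\int u(x,1)\,dx = M > 0$, and by the continuity result of \cite{H11} the function $u(\cdot,1)$ is continuous on $\ren$. Hence there exist $x_{0}\in\ren$ and constants $c,\rho>0$ such that $u(x,1)\ge c$ for all $x\in B_{\rho}(x_{0})$. Using the same scaling with $k^{\beta}=K_{1}$, as $K_{1}\to 0^{+}$ one has $F_{K_{1}}(0)=K_{1}^{1/\beta}F_{1}(0)\to 0$ and the support of $F_{K_{1}}$ contracts anisotropically to the origin. I may therefore choose $K_{1}>0$ so small that $\mathrm{supp}(F_{K_{1}})\subset B_{\rho}(0)$ and $\|F_{K_{1}}\|_{\infty}\le c$, so that $F_{K_{1}}(x-x_{0})\le u(x,1)$ pointwise. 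Since the equation is translation-invariant in $x$, the map $(x,t)\mapsto U_{K_{1}}(x-x_{0},t)$ is a constructed solution on $[1,\infty)$; applying Theorem \ref{contraction} on the interval $[1,T]$ (arbitrary $T>1$) yields $U_{K_{1}}(x-x_{0},t)\le u(x,t)$ for all $t\ge 1$.

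\textbf{Main obstacle.} The real subtlety lies in the lower bound: one must upgrade the purely integral fact $\int u(\cdot,1)=M$ to a pointwise lower bound on a ball, and this is what forces the use of the continuity theorem of \cite{H11}. This step plays in the anisotropic slow-diffusion setting the role that the Caffarelli--Friedman regularity theory plays in the isotropic porous medium equation; without a Harnack or continuity statement there would be no way to pass from positive mass at time $1$ to a uniform barrier from below. The upper bound, by contrast, is an essentially routine consequence of the scaling \eqref{Tk}--\eqref{beta} for the profile and the comparison principle.
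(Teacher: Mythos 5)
Your proposal is correct and follows essentially the same route as the paper: the upper bound by choosing $K_2$ so large that $F_{K_2}=U_{K_2}(\cdot,1)$ dominates $u_0$ on its support and then comparing, and the lower bound by using continuity and positive mass of $u(\cdot,1)$ to place a small fundamental solution $U_{K_1}(\cdot-x_0,\cdot)$ below $u$ at $t=1$ and comparing onward. Your write-up merely fills in the scaling details \eqref{Tk}--\eqref{beta} that the paper leaves implicit.
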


\noindent {\sl Proof. } (1) For the upper bound we observe that since $u_0(x)\le C_1$ in the ball of radius $R$ and zero outside,
there exists a large mass $K_2$ such that $u_0(x)\le U_{K_2}(x,1)$. The  comparison principle implies that the upper estimate holds for $t>0$.

(2) For the lower bound we place ourselves at $t=1$ and observe that $u(x,1)$ is a continuous and nonnegative function such that the mass is positive, $M$. It follows that $u(x,1)$ must be positive in a neighbourhood of a point $x_0\in\ren$. The estimate follows again from the  comparison principle   by comparing with a small fundamental solution centered around $x_0$. \qed

Passing to the self-similar variables
\[
v(y,\tau)=(t+1)^\alpha u(x,t),\quad \tau=\log (t+1),\quad y_i=x_i(t+1)^{-\sigma_i\alpha} \quad i=1,..,N,
\]
Proposition \ref{thmasympto.prel} can be rephrased in the following form:
\begin{corollary}\label{thmasympto.prel.v} Under the conditions of the Proposition \ref{thmasympto.prel}, if $v(y,\tau)$ is the renormalized solution we will have for all large $\tau\ge\tau_0$ that there are positive constants $K_1$ and  $K_2$ such that
\begin{equation}\label{L1conv}
 \left(1+ t^{-1}\right)^\alpha F_{K_1}\left((1+ t^{-1}\right)^{\sigma_i\alpha}(y_{i}-y_{0,i}))\le v(y,\tau)\le
F_{K_2}(y).
\end{equation}
\end{corollary}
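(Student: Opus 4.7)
The proof is a direct computation that simply rewrites the two-sided bound of Proposition \ref{thmasympto.prel} in the self-similar variables $(y,\tau)$ defined in \eqref{NewVariables} with $t_0=1$. The plan is to substitute the self-similar form $U_M(x,s)=s^{-\alpha}F_M(s^{-\sigma_1\alpha}x_1,\ldots,s^{-\sigma_N\alpha}x_N)$ into each side of \eqref{L1conv} and then multiply through by $(t+1)^\alpha$.

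For the upper bound, observe that with $s=t+1$ the self-similar rescaling coincides exactly with the definition of $v$: one has
\begin{equation*}
U_{K_2}(x,t+1)=(t+1)^{-\alpha}F_{K_2}\!\left((t+1)^{-\sigma_1\alpha}x_1,\ldots,(t+1)^{-\sigma_N\alpha}x_N\right)=(t+1)^{-\alpha}F_{K_2}(y),
\end{equation*}
so multiplying $u(x,t)\le U_{K_2}(x,t+1)$ by $(t+1)^\alpha$ immediately yields $v(y,\tau)\le F_{K_2}(y)$.

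For the lower bound the time argument of $U_{K_1}$ is $t$ rather than $t+1$, which produces the small correction factors appearing in the statement. Writing
\begin{equation*}
U_{K_1}(x-x_0,t)=t^{-\alpha}F_{K_1}\!\left(t^{-\sigma_1\alpha}(x_1-x_{0,1}),\ldots,t^{-\sigma_N\alpha}(x_N-x_{0,N})\right),
\end{equation*}
and substituting $x_i=(t+1)^{\sigma_i\alpha}y_i$ gives
\begin{equation*}
t^{-\sigma_i\alpha}(x_i-x_{0,i})=(1+t^{-1})^{\sigma_i\alpha}\bigl(y_i-y_{0,i}\bigr),\qquad y_{0,i}:=(t+1)^{-\sigma_i\alpha}x_{0,i}.
\end{equation*}
Multiplying the lower bound by $(t+1)^\alpha$ and noting that $(t+1)^\alpha t^{-\alpha}=(1+t^{-1})^\alpha$ produces exactly the claimed inequality.

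There is essentially no obstacle, since Proposition \ref{thmasympto.prel} does all the real work; the only point worth remarking on is that the shift $y_{0,i}$ depends on $t$ but tends to $0$ as $\tau\to\infty$ (as does the factor $(1+t^{-1})^\alpha-1$), which is precisely the reason this reformulation will be useful when one later passes to the limit $\tau\to\infty$ to extract the self-similar asymptotic profile.
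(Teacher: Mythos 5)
Your proposal is correct and is exactly the argument the paper intends: the corollary is simply Proposition \ref{thmasympto.prel} rewritten in the self-similar variables \eqref{NewVariables} with $t_0=1$, and your substitution, including the identification $y_{0,i}=(t+1)^{-\sigma_i\alpha}x_{0,i}$ and the factor $(t+1)^{\alpha}t^{-\alpha}=(1+t^{-1})^{\alpha}$, reproduces the stated two-sided bound. Your closing remark that the shift and the correction factor vanish as $\tau\to\infty$ matches the paper's subsequent use of the estimate.
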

This result shows that the functions $v(\cdot,\tau)$ approach $F$ as $\tau\to\infty$ but for some constant factors. In the sequel we eliminate this uncertainty in the constants to obtain the sharp convergence result.

\noindent {\sl Proof of Theorem \ref{thmasympto.cs} when $u_0$ is bounded and compactly supported.  }
For definiteness we assume that  $u_0$ is bounded and compactly supported in a ball of radius $R_0$. The proof uses an adaptation of the ``four-step method'', introduced by Kamin-V\'azquez \cite{KV88} and applied to the isotropic case, see also \cite[Theorem 18.1]{Vlibro}. Here we have to face substantial new issues due to the presence of degenerate anisotropy.
\medskip

\noindent {\bf $L^1$-convergence}.
We first prove the asymptotic convergence  for $p=1$:
\begin{equation}\label{L1conv.1}
\lim_{t\rightarrow\infty}\|u(t)-U_{M}(t)\|_{L^1(\ren)}=0.
\end{equation}

\medskip

\noindent (1) We introduce the family of \emph{rescaled} solutions given by
\[
u_{\lambda}(x,t)=
\lambda^{\alpha}u(\lambda^{\sigma_1\alpha}x_1,..., \lambda^{\sigma_N\alpha}x_N,\lambda t).
\]
The mass conservation,  the $L^{1}$-$L^{\infty}$ smoothing effect \eqref{Linfty-L1} and interpolation yield the uniform boundedness of the norms $\|u_{\lambda}(\cdot,t)\|_{p}$ for all $p\in[1,\infty]$ and $t>0$. Furthermore, using  \eqref{Energywholespace}  and \eqref{ai}  we have, for all $t>t_{0}>0$,\
\begin{align*}
&\int_{t_{0}}^{t}\int_{\R^{N}} \left|\frac{\partial u_{\lambda}^{m_{i}}}{\partial x_{i}}\right|^{2}dx\,d\tau=\lambda^{\alpha(2m_{i}+2\sigma_{i}-1)-1}
\int_{\lambda t_{0}}^{\lambda t}\int_{\R^{N}} \left|\frac{\partial u^{m_{i}}}{\partial x_{i}}\right|^{2}dx\,d\tau \\
&\leq \lambda^{\alpha(2m_{i}+2\sigma_{i}-1)-1}\int_{\R^{N}}\left|u(x,\lambda t_{0})\right|^{m_{i}+1}dx\le  M  \lambda^{\alpha m_i} \|u(\cdot,\lambda t_0)\|_{L^\infty}^{m_i},
\end{align*}
thus the smoothing effect \eqref{Linfty-L1} gives
\begin{equation}\label{uniformspatdervlambda}
\int_{t_{0}}^{t}\int_{\R^{N}} \left|\frac{\partial u_{\lambda}^{m_{i}}}{\partial x_{i}}\right|^{2}dx\,d\tau\le C M^{ 1+2m_i\frac{\alpha}{N}}t_0^{-\alpha m_i},
\end{equation}
an estimate that is independent of $\lambda$. Thus,   for all $i$ the derivatives $\partial_{x_{i}}(u^{m_{i}}_{\lambda})$ are equi-bounded in $L^{2}_{x,t}$ locally in time.  Moreover, the $L^{1}$-$L^{\infty}$ smoothing effect \eqref{Linfty-L1} implies that $\left\{u_{\lambda}\right\}$ is equi-bounded in $L^{\infty}$  for $t\geq\varepsilon$  with $\varepsilon>0$.

\medskip

(2) Now we use the Fréchet-Kolmogorov Theorem in order to conclude that the family $\left\{u_{\lambda}\right\}$ is relatively compact in $L^{1}_{loc}(\R^{N}\times(0,\infty))$. We have only  to control the equi-continuity of $\left\{u_{\lambda}\right\}$ in $L^1$-norm, because the equi-tightness holds since the integrals act on compact sets. \normalcolor Here is the argument. Fix $\ve>0$, there is some $\delta>0$ such that the compactness for translations holds as in the proof of Lemma \ref{lemma.precomp} for some small displacement $|z|\leq \delta$. Now for fixed times $T_{1},\,T_{2}$, a compact set $K$ of $\R^{N}$ and space-time displacements $z$, $t_{1}>0$, we can write
\begin{align*}
&\int_{T_{1}}^{T_{2}}\int_{K}|u_{\lambda}(x+z,t)-u_{\lambda}(x,t+t_{1})|dxdt\\
&\leq
\int_{T_{1}}^{T_{2}}\int_{\R^{N}}|u_{\lambda}(x+z,t+t_{1})-u_{\lambda}(x,t+t_{1})|dxdt+
\int_{T_{1}}^{T_{2}}\int_{K}|u_{\lambda}(x,t+t_{1})-u_{\lambda}(x,t)|dxdt\\
&:=A+B.
\end{align*}
Using \eqref{NewVariables} \normalcolor we can easily check that $u_{\lambda}(x,t)$ is related to the $v(y,\tau)$ by the formula
\begin{equation}\label{relationulambdav}
u_{\lambda}(x,t)=
\lambda^{\alpha}(\lambda t+1)^{-\alpha}v(\lambda^{\sigma_1\alpha}(\lambda t+1)^{-\alpha\sigma_1}x_1,..., \log(\lambda t+1)).
\end{equation}
Thus we use \ref{relationulambdav} in order to write $u_{\lambda}$ in terms of $v$. Therefore, proceeding as in the proof of Lemma \ref{lemma.precomp}, when $|z|\leq \delta/N$ we have
\begin{equation}
\begin{split}
A=&\left(\frac{\lambda}{\lambda t +1}\right)^\alpha \normalcolor
\int_{T_{1}}^{T_{2}}\int_{\R^{N}}|v\left(w_{i}+\left(\frac{\lambda}{1+\lambda t}\right)^{\alpha\sigma_i \normalcolor}z_{i},\log(1+\lambda (t+t_{1}))\right)-v(w_{i},\log(1+\lambda (t+t_{1}))|dw\,dt\\
\leq &\left(\frac{\lambda}{\lambda T_1 +1}\right)^\alpha \normalcolor\ve (T_{2}-T_{1})\leq  C_1(T_1)\ve (T_{2}-T_{1})\normalcolor.
\end{split}
\end{equation}
Moreover, observe that $v$ is uniformly continuous  in the compact set $K\times [T_{1},T_{2}]$. Observing that for $x\in K$
\[
\left(\left(\frac{\lambda}{1+\lambda t}\right)^{\alpha\sigma 1}x_{1},...,\left(\frac{\lambda}{1+\lambda t}\right)^{\alpha\sigma N}x_{N}\right)\rightarrow x
\]
as $\lambda\rightarrow \infty$ and that
\[
\log(1+\lambda t+\lambda t_{1})-\log (1+\lambda t)=\log (1+\frac{\lambda t_{1}}{1+\lambda t})\leq \delta_{1}
\]
for large $\lambda $ and small $t_{1}$, where $\delta_{1}=\delta_{1}(K,T_{1},T_{2})$ comes from the uniform continuity of $v$. Hence, for small $z$ and $t_1$ we finally have
\[
\int_{T_{1}}^{T_{2}}\int_{K}|u_{\lambda}(x+z,t)-u_{\lambda}(x,t+t_{1})|dxdt\leq  C_1(T_1)\normalcolor\ve(T_{2}-T_{1})(1+|K|).
\]
This proves the compactness of the translations of $u_{\lambda}$ on compact sets. Therefore, the Fréchet-Kolmogorov applies and we can conclude that there is some $\tilde{U}$
such that (up to subsequences)
\[
u_{\lambda}\rightarrow \widetilde{U}\text{ strongly in }L^{1}_{loc}(Q).
\]

\noindent (3) We prove that $\widetilde{U}$ is a solution to \eqref{APM}. In order to pass to the weak limit in the weak formulation for the $u_{\lambda}$'s, we use the local $L^{1}$ convergence and uniform-in-time energy estimates of the spatial derivatives  $\partial_{x_{i}}u^{m_{i}}_{\lambda}$ for all $i=1,\cdots,N$, obtained in \eqref{uniformspatdervlambda}. Therefore, using the proof of \cite[Lemma 18.3]{Vlibro} we find that $\widetilde{U}$ solves \eqref{APM} for all $t>0$. It must have a certain mass $M_1$ at each time $t>0$.

\noindent (3b) Now we prove that the mass of $\widetilde{U}$ is just $M$. We recall that we have assumed that $u_{0}$ is bounded and compactly supported in a ball $B_{R}(0) $ with mass $M$. We may use the upper bound
constructed in Proposition \ref{thmasympto.prel} as a compactly supported supersolution to make sure that no mass is lost at infinity. Indeed can argue like in the proof of \cite[Theorem 18.1]{Vlibro} and find a large self-similar solution $U_{M^{\prime}}$ such that
\[
u_{\lambda}(x,t)\leq U_{M^{\prime}}\left(x,t+\frac{1}{\lambda}\right).
\]
This estimate and the convergence $u_{\lambda}\rightarrow  \widetilde{U} $ a.e. in $\mathbb{R}^N$ allow to apply Lebesgue's dominated convergence Theorem, which yields
\[
u_{\lambda}(t)\rightarrow \widetilde{U}(t)\quad \text{in}\, L^{1}(\R^{N}).
\]
Then the mass of $\widetilde{U}$ is equal to $M$ at any positive time $t$ and  we have obtained that $\widetilde{U}$ is a fundamental solution with initial mass $M$. If such  fundamental solution is self-similar, then the uniqueness theorem would imply $\widetilde{U}(x,t)=U_M(x,t)$. If not, we can employ all the argument in \cite[Subsection 18.5]{Vlibro}
to the Lyapunov functional
$$
J[u,U_M](t)=\int_{\ren} |u(x,t)-U_M(x,t)|\,dx
$$
and conclude that $\widetilde{U}=U_{M}$.

\begin{remark}
{\rm We stress that no    B\'enilan-Crandall estimate for the time derivative $\partial_{t}u$    \cite{BC81}   is available (contrary to the isotropic case), therefore in the proof we need a novel argument to obtain relative compactness in $L^{1}_{loc}(\R^{N}\times(0,\infty))$.}
\end{remark}

\noindent {\bf $L^p$-convergence}. For every $p>1$ we can use the inequality
$$
\|u(t)-U_{M}(t)\|_{L^p}^p\le \|u(t)-U_{M}(t)\|_{L^1}\|u(t)-U_{M}(t)\|_{L^\infty}^{p-1}
$$
together with the $L^1$ convergence and the uniform estimates, $ \|U_{M}(t)\|_{L^\infty}\le C\,t^{-\alpha}$, $\|u(t)\|_{L^\infty}\le C\,(t+1)^{-\alpha}$. This argument does not apply to prove $L^\infty$ convergence.
\qed

\begin{corollary}\label{as.conv.L1.g}
The results of Theorem  \ref{thmasympto.cs} are true even if the initial data $u_0\ge 0$ are just $L^{1}$, not necessarily bounded and/or compactly supported.
\end{corollary}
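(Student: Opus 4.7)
My plan is a straightforward density argument, reducing to the bounded--compactly supported case already proved in Theorem \ref{thmasympto.cs}, using the $L^1$--contraction semigroup structure of Theorem \ref{EUWES} together with the explicit continuous dependence of $U_M$ on $M$ coming from the scaling \eqref{Tk}.

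First, given $u_0\in L^1(\ren)$ with $u_0\ge 0$ and mass $M$, and any $\delta>0$, pick a bounded, compactly supported $u_{0,n}\ge 0$ with $\|u_0-u_{0,n}\|_1<\delta$; set $M_n=\int u_{0,n}\,dx$, so $|M_n-M|<\delta$. Let $u_n$ be the solution with data $u_{0,n}$ and $u$ the solution with data $u_0$. By the $L^1$--contraction \eqref{L1_contr},
\begin{equation*}
\|u(t)-u_n(t)\|_1 \le \|u_0-u_{0,n}\|_1 < \delta \quad \text{for all } t>0.
\end{equation*}
By Theorem \ref{thmasympto.cs} applied to $u_n$, $\|u_n(t)-U_{M_n}(t)\|_1\to 0$ as $t\to\infty$.

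Next I control $\|U_{M_n}(t)-U_M(t)\|_1$ \emph{uniformly in $t$}. Using the self-similar form $U_M(x,t)=t^{-\alpha}F_M(t^{-a_i}x_i)$ and the change of variables $y_i=t^{-a_i}x_i$, which has Jacobian $t^{-\alpha}$ by $\alpha=\sum a_i$, one has
\begin{equation*}
\|U_{M_n}(t)-U_M(t)\|_1=\|F_{M_n}-F_M\|_1 \quad \forall t>0.
\end{equation*}
Assuming WLOG $M_n\le M$, the monotonicity Proposition \ref{prop.mon.M} gives $F_{M_n}\le F_M$ pointwise, so
\begin{equation*}
\|F_{M_n}-F_M\|_1=\int(F_M-F_{M_n})\,dy=M-M_n<\delta.
\end{equation*}
(The symmetric case is analogous.) Combining the three estimates,
\begin{equation*}
\|u(t)-U_M(t)\|_1\le \|u(t)-u_n(t)\|_1+\|u_n(t)-U_{M_n}(t)\|_1+\|U_{M_n}(t)-U_M(t)\|_1<3\delta
\end{equation*}
for $t$ sufficiently large, which proves the $L^1$ statement in \eqref{L1conv}.

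For the $L^p$ statement ($1<p<\infty$) I would use the smoothing effect \eqref{Linfty-L1} on both $u$ and $U_M$, which gives $\|u(t)\|_\infty+\|U_M(t)\|_\infty\le C t^{-\alpha}$ for $t\ge 1$, together with the interpolation
\begin{equation*}
\|u(t)-U_M(t)\|_p^p \le \|u(t)-U_M(t)\|_1\, \|u(t)-U_M(t)\|_\infty^{p-1} \le C\,t^{-\alpha(p-1)}\,\|u(t)-U_M(t)\|_1,
\end{equation*}
so that $t^{(p-1)\alpha/p}\|u(t)-U_M(t)\|_p\to 0$ as $t\to\infty$. There is no serious obstacle; the only point requiring a little care is confirming that the uniform-in-time identity $\|U_{M_n}(t)-U_M(t)\|_1=|M_n-M|$ is valid, which is where the monotonicity Proposition \ref{prop.mon.M} is essential (a triangle inequality plus mass conservation would give $2|M_n-M|$, but monotonicity gives the sharper expression and, more importantly, the key $L^1$ bound independent of $t$).
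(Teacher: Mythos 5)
Your proof is correct and follows essentially the same route the paper indicates for this corollary: a density argument combining the $L^1$-contraction of the semigroup with the mass-monotonicity of Proposition \ref{prop.mon.M} (which yields $\|U_{M_n}(t)-U_M(t)\|_1=\|F_{M_n}-F_M\|_1=|M_n-M|$ uniformly in $t$), followed by interpolation with the smoothing effect \eqref{Linfty-L1} for the $L^p$ case. Only your parenthetical aside that a bare triangle inequality ``would give $2|M_n-M|$'' is inaccurate (without monotonicity it only gives $M_n+M$), but this does not affect the argument.
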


We can use a density argument as in \cite[Subsection 18.2.2]{Vlibro}, where an essential tool is the monotonicity with respect to the mass devised in Proposition \ref{prop.mon.M}. The $L^{p}$ convergence follows by interpolation as in \cite[Theorem 1.2]{FVV23}.
This follows immediately from the $L^1$ contractivity of the solution semigroup, see the argument in  \cite{FVV23}.

\subsection{$L^\infty$-convergence under additional  conditions}

Actually, we have a stronger asymptotic convergence result under our initial conditions of bounded and compactly supported data.

\begin{theorem}\label{conc.as.csdata} Under the extra conditions  that $u_0$  is bounded and compactly supported\normalcolor, the convergence formula \eqref{Lpconv} holds for $p=\infty$:
\begin{equation}\label{con L infty}
\lim_{t\rightarrow\infty}t^{\alpha}\|u(t)-U_{M}(t)\|_{\infty}=0,
\end{equation}
where $\alpha$ is given by \eqref{alfa}.
\end{theorem}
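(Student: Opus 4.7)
The plan is to work in the self-similar variables of Section 2. Since $F_M$ is continuous with compact support on $\mathbb{R}^N$, hence uniformly continuous, and the factors relating $(t+1)^\alpha U_M(x,t)$ to $F_M(y)$ converge to $1$ uniformly as $t\to\infty$, using $t^\alpha\sim(t+1)^\alpha$ it suffices to prove
\[
\lim_{\tau\to\infty}\|v(\cdot,\tau)-F_M\|_{L^\infty(\mathbb{R}^N)}=0,
\]
where $v$ solves \eqref{APMs}. The conclusion \eqref{con L infty} will then follow by translating back to the original variables.

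The strategy is a compactness/contradiction argument. By Corollary~\ref{thmasympto.prel.v} we have the uniform pointwise envelope $v(y,\tau)\le F_{K_2}(y)$ for all $\tau\ge\tau_0$. This provides a uniform $L^\infty$ bound and, crucially, confines the positivity set of $v(\cdot,\tau)$ inside the fixed compact set $\overline{\Omega(F_{K_2})}$. The $L^1$-convergence $v(\cdot,\tau)\to F_M$ is already available from Subsection~\ref{sec.asymp1}. The missing ingredient is a common modulus of continuity for the whole family $\{v(\cdot,\tau):\tau\ge 1\}$. Since the rescaled equation \eqref{APMs} is an anisotropic porous medium equation perturbed by the drift $\alpha\sigma_i(y_iv)_{y_i}$, which is smooth and bounded on $\overline{\Omega(F_{K_2})}$, the continuity theory of Henriques~\cite{H11}, combined with the interior quasilinear regularity~\cite{LSU} inside the positivity set, furnishes such a uniform modulus $\omega$, depending only on $\|v\|_\infty$, on the diameter of the support, and on the structural constants of the equation.

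Equipped with this uniform $\omega$, Arzel\`a--Ascoli makes $\{v(\cdot,\tau):\tau\ge 1\}$ relatively compact in $C(\mathbb{R}^N)$. If the $L^\infty$ convergence failed, there would exist $\delta>0$ and a sequence $\tau_n\to\infty$ with $\|v(\cdot,\tau_n)-F_M\|_\infty\ge\delta$; extracting a uniformly convergent subsequence $v(\cdot,\tau_n)\to g\in C(\mathbb{R}^N)$ would yield $\|g-F_M\|_\infty\ge\delta$, but the $L^1$-convergence forces $g=F_M$ almost everywhere, hence everywhere by continuity, a contradiction. Transferring back through the change of variables gives \eqref{con L infty}.

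The main obstacle I foresee is securing the uniform modulus of continuity up to and across the free boundary. In the interior of the positivity set the equation is uniformly parabolic and standard estimates apply without difficulty. Near the free boundary one must rely on the intrinsic scaling arguments of \cite{H11}, checking carefully that the resulting modulus depends only on the uniform $L^\infty$ envelope and on the uniform confinement of the supports within $\overline{\Omega(F_{K_2})}$, rather than on any individual initial datum. Once that uniform dependence is verified, the compactness/contradiction scheme above delivers \eqref{con L infty}.
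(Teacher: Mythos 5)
Your reduction to $\lim_{\tau\to\infty}\|v(\cdot,\tau)-F_M\|_\infty=0$ and the final compactness-plus-$L^1$-identification scheme are fine, but the argument has a genuine gap exactly at the point you flag: the uniform modulus of continuity for the whole family $\{v(\cdot,\tau):\tau\ge 1\}$ up to and across the free boundary is not available off-the-shelf, and it is in fact the crux of the theorem, not a technical verification. The continuity result of \cite{H11} concerns bounded local weak solutions of the anisotropic porous medium equation itself; the rescaled function $v$ solves \eqref{APMs}, which carries the confining drift $\alpha\sigma_i(y_iv)_{y_i}$, and the paper only invokes \cite{H11} for the continuity of individual solutions and of the profile $F_M$, never for a quantitative modulus uniform over a family. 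Alternatively one could try to prove equicontinuity of $v(\cdot,\tau)$ by transferring continuity estimates from $u(\cdot,t)$ through the anisotropic stretching $y_i=x_i(t+1)^{-\alpha\sigma_i}$ together with the amplitude factor $(t+1)^{\alpha}$, but this requires a quantitative, scaling-compatible (intrinsic) continuity estimate for the anisotropic slow diffusion equation near the degeneracy set, which is precisely what is missing in the anisotropic theory (in the isotropic case it follows from the classical uniform H\"older estimates, which is why the analogous argument works there). \cite{LSU} only helps strictly inside the positivity set, where the equation is uniformly parabolic, so it cannot close the argument near $\Gamma(F_M)$, which is where the $L^\infty$ statement has its content.

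The paper's proof is built to avoid this obstacle. Near the origin it establishes quantitative nondegeneracy: first for SSNI data via the positivity Lemma \ref{lem.pos}, then for general bounded compactly supported data by comparison with a small SSNI subsolution centered at a displaced point and by showing that the expanding anisotropic neighbourhood eventually contains the origin; on that small cylinder the equation is uniformly parabolic, so classical regularity plus Ascoli--Arzel\`a and the $L^1$ limit give uniform convergence there. Away from the origin, instead of equicontinuity, it uses the Aleksandrov reflection monotonicity of Corollary \ref{Cor 3b}: since the region where cone monotonicity may fail shrinks like $a_i(t+1)^{-\alpha\sigma_i}$, any pointwise separation $|v(y_0,\tau_k)-F_M(y_0)|\ge h$ (upward or downward) propagates, via monotonicity along a cone of directions and continuity of $F_M$, to a set of fixed positive measure, contradicting the already established $L^1$ convergence; the region where $F_M$ is small is handled the same way from above and trivially from below. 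If you want to keep your route, you would have to actually prove the uniform (drift-compatible, boundary-reaching) modulus of continuity, which is an open-ended task; otherwise you should replace that step by a nondegeneracy-plus-monotonicity argument of the type the paper uses.
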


We need the following quantitative positivity lemma contained in Lemma 5.1 of \cite{FVV23}, valid for fast diffusion, that can be adapted to the present slow diffusion case.
\begin{lemma}\label{lem.pos}
Let $v$ be the solution of the rescaled equation \eqref{APMs} with a nonnegative SSNI initial datum $u_0$ bounded and compactly supported with mass $M>0$. Then, there exist a constant $c>0$, a time $\tau_1>0$ and a ball of radius $r_0>0$, such that
\begin{equation*}
v(y,\tau)\ge c\quad \mbox{ for all } \ y\in B_{r_0}(0), \ \tau>\tau_1.
\end{equation*}
\end{lemma}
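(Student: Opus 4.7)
The plan is to invoke the quantitative lower barrier of Proposition \ref{thmasympto.prel}, re-centered at the origin thanks to the SSNI hypothesis, and then read off pointwise positivity on a small ball from the continuity and strict positivity of the fundamental profile $F_{K_1}$ at $0$.

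First, since $u_0$ is nonnegative, SSNI and of mass $M>0$, Proposition \ref{Prop 3} implies that $u(\cdot,t)$ is SSNI for every $t>0$. At $t=1$, by Theorem \ref{EUWES}(5), $u(\cdot,1)$ is continuous and SSNI; SSNI monotonicity forces $u(\cdot,1)\le u(0,1)$, so if we had $u(0,1)=0$ then $u(\cdot,1)\equiv 0$, contradicting $\int u(\cdot,1)\,dx=M>0$. Hence $u(0,1)>0$, and by continuity $u(\cdot,1)\ge\delta>0$ on some ball $B_{r}(0)$. Choosing $K_1>0$ sufficiently small that the support of $U_{K_1}(\cdot,1)$ lies in $B_{r}(0)$ and $U_{K_1}(\cdot,1)\le\delta$ there (possible by the scaling of Subsection \ref{sec scaling}), the comparison principle of Theorem \ref{contraction} yields
\begin{equation*}
u(x,t)\ge U_{K_1}(x,t)\qquad \forall x\in\R^N,\ t\ge 1.
\end{equation*}
This is the analog of Proposition \ref{thmasympto.prel} but with $x_0=0$, a feature that is indispensable below.

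Second, I would transfer this bound into the self-similar variables \eqref{NewVariables} with $t_0=1$. Using the explicit form $U_{K_1}(x,t)=t^{-\alpha}F_{K_1}(t^{-\sigma_1\alpha}x_1,\ldots,t^{-\sigma_N\alpha}x_N)$ and $v(y,\tau)=(t+1)^{\alpha}u(x,t)$ with $y_i=x_i(t+1)^{-\sigma_i\alpha}$, a direct substitution gives
\begin{equation*}
v(y,\tau)\ge \left(1+\tfrac{1}{t}\right)^{\alpha}\,F_{K_1}\!\left(\bigl(1+\tfrac{1}{t}\bigr)^{\sigma_1\alpha}y_1,\ldots,\bigl(1+\tfrac{1}{t}\bigr)^{\sigma_N\alpha}y_N\right),\qquad t=e^{\tau}-1\ge 1.
\end{equation*}
As $\tau\to\infty$, both the prefactor $(1+1/t)^{\alpha}$ and the anisotropic dilation factors $(1+1/t)^{\sigma_i\alpha}$ tend to $1$, so the right-hand side converges pointwise (from above) to $F_{K_1}(y)$.

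Third, I would exploit the support description of Theorem \ref{fundamental solution} and Proposition \ref{prop.R(e)}: $\Omega(F_{K_1})$ is an open neighborhood of the origin, and $F_{K_1}$ is continuous and strictly positive on it. Hence there exist $s_0>0$ and $c_0>0$ with $\overline{B_{s_0}(0)}\subset\Omega(F_{K_1})$ and $F_{K_1}\ge 2c_0$ on $\overline{B_{s_0}(0)}$. Choose $\tau_1\ge \log 2$ large enough that $(1+1/t)^{\sigma_i\alpha}\le 2$ for all $\tau\ge\tau_1$ and all $i$, which is possible since $\sigma_i\alpha$ is finite and $(1+1/t)^{\sigma_i\alpha}\to 1$. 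Then for $y\in B_{r_0}(0)$ with $r_0:=s_0/2$, the dilated point $\bigl((1+1/t)^{\sigma_i\alpha}y_i\bigr)$ satisfies $|(1+1/t)^{\sigma_i\alpha}y_i|\le 2|y_i|$, hence lies in $B_{s_0}(0)$, and the prefactor is $\ge 1$. Therefore
\begin{equation*}
v(y,\tau)\ge 2c_0\qquad \forall y\in B_{r_0}(0),\ \tau\ge\tau_1,
\end{equation*}
which is the desired assertion with $c:=2c_0$.

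The main obstacle is the first step: without SSNI one would only know positivity of $u(\cdot,1)$ near some unspecified point $x_0\in\R^N$, producing a lower bound of the form $U_{K_1}(x-x_0,t)$ whose rescaling concentrates around a point $y_0(\tau)\ne 0$, not around the origin. It is precisely the SSNI hypothesis, combined with continuity of $u(\cdot,1)$ and mass conservation, that lets us locate this center at $0$ and thereby pin the ball of positivity to $B_{r_0}(0)$. Once this is achieved the remaining steps are a direct transfer between original and self-similar variables and a continuity argument based on the openness of $\Omega(F_{K_1})$.
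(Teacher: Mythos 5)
Your argument is correct, but it follows a genuinely different route from the paper's. The paper proves the lemma by adapting the mass-concentration argument of Lemma 5.1 of \cite{FVV23}: it only uses that $v$ is SSNI, has conserved mass, and is dominated from above by the integrable barrier $F_{K_2}$ supplied by Corollary \ref{thmasympto.prel.v}, so that the mass can neither escape to infinity nor hide near the coordinate hyperplanes, forcing a uniform lower bound on a small box around the origin. You instead build a lower barrier: you observe that the SSNI property places the maximum of the continuous function $u(\cdot,1)$ at the origin, so mass conservation gives $u(0,1)>0$ and hence $u(\cdot,1)\ge\delta$ on a ball $B_r(0)$; then you slip a small fundamental solution $U_{K_1}$ underneath (possible since, for $m_i>1$, the mass scaling of Subsection \ref{sec scaling} shrinks both the amplitude and the support of $F_{K_1}$), apply the comparison of Theorem \ref{contraction}, and read off the conclusion in rescaled variables from the continuity and strict positivity of $F_{K_1}$ near $y=0$ together with the fact that the dilation factors $(1+1/t)^{\sigma_i\alpha}\to1$. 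In effect you are re-deriving the lower bound of Proposition \ref{thmasympto.prel} with the center pinned at $x_0=0$ by symmetry, which is precisely what the shifted estimate of Corollary \ref{thmasympto.prel.v} does not give. Your route is shorter and more transparent in the slow-diffusion setting, but it leans on the full SSF theory of Theorem \ref{fundamental solution} (existence, continuity, $0\in\Omega(F_{K_1})$) plus the comparison theorem; since Sections 4 and 5 precede this lemma there is no circularity. The paper's route is heavier here but mirrors the fast-diffusion proof, where no compactly supported fundamental profile is available to serve as a subsolution, and it needs only the upper barrier rather than the detailed structure of the profile.
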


\noindent {\sl Proof.} The proof runs as in \cite[Lemma 5.1]{FVV23} if $v$ is controlled by an $L^1$-function. We observe that $v$ is nonnegative and SSNI. By Corollary \ref{thmasympto.prel.v} there exists $\tau_1$ and $K_2>0$ such that $v(y,\tau)\leq F_{K_2}(y)$ for all $\tau\geq \tau_1$. Then $F_{K_2}$ plays the role of barrier in the proof of Lemma 5.1 of \cite{FVV23}. It is probably more convenient to use the box $Q_{r_0}(0)=[-r_0,r_0]^N$ instead of $B_{r_0}(0)$.
\qed

\begin{remark}\label{remark pos} We observe that the lemma holds if we displace the origin and we assume that $u_0$ is SSNI around some $x_0\not\equiv0$. In order to get a convenient definition of rescaled variables $v(y,\tau)$ we have to use the shifted-space transformation $y_i=(x_i-x_{0i})(t+t_0)^{-\alpha \sigma_i}$ instead of \eqref{NewVariables}.
\end{remark}

\medskip

\noindent{\sl Proof of Theorem \ref{conc.as.csdata}.}  We can prove the result in the  equivalent formulation in terms of rescaled variables
\begin{equation}\label{conv.v. infty}
\lim_{\tau \rightarrow\infty}\|v(y, \tau)-F_{M}(y)\|_{\infty}=0.
\end{equation}

 i) First we use the $L^1$ to $L^\infty$ smoothing effect of Theorem \ref{L1LI} to conclude that  the solution $v(y,\tau)$ is uniformly bounded  from above for all $y\in\ren$, if $\tau$ is large.

 ii) Getting a uniform control from below is impossible in $\ren$ because of the property of compact support. So we want to obtain local positivity. We  argue in two steps as follows. If $u_0$ is SSNI and nontrivial, we may apply the quantitative positivity Lemma \ref{lem.pos}: \normalcolor there exist positive constants $c_1, \ve, \tau_1$ (depending on the solution) such that
\begin{equation}\label{88}
v(y,\tau)\ge  c_1 \quad \mbox{ for all } \ y\in Q_{\ve}(0), \ \tau>\tau_1.
\end{equation}
hence  $v(y,\tau)$ is uniformly bounded away from zero in a certain ball $B_\ve$ for all $y\in\ren$, $\tau$ large.

iii) In the next step we prove the analogue of \eqref{88} for solutions with a general bounded and compactly supported datum. The nonnegative solutions we consider are continuous in space and time for all positive times. Since the mass of the solution is preserved in time and the solution is continuous, then given any $t_1>0$ we may pick some $x_0\in\ren$ such that $u(x,t)\ge c_2$ for some constant $c_2>0$ in a neighborhood of $(x_0,t_1)$.  It is now convenient to change coordinates  and set the new space coordinates as $x'=x-x_0$ so $u(x',t_1)$ becomes positive at $x'=0$. The new space coordinate for $v$ will be called $y'$.

In the new coordinates we can choose a small function $w(x')$ that is SSNI around $x'=0$, it is compactly supported, and is such that $w(x' )\le u(x' ,t)$ for $ t$ close to $t_1$. For $\delta>0$ small enough let  $u_1( x' ,t)$ be the solution starting at $\bar t=t_1-\delta$ with initial value $u_1(x',t_1-\delta)=w(x')$. By comparison $u(x',t)\ge u_1(x',t)$  for all $x'$ and for $t_1-\delta<t<t_1+t_2-\delta$. We can check  that $t_2$ does not depend on $\delta$ and we recall that $t_1$ is any positive time.
Moreover, by Lemma \ref{lem.pos} and Remark \ref{remark pos} we have that the solution $u_1(x',t)$ in the rescaled variable, i.e. $v_1(y',\tau)$, is greater than or equal to $c_1$ in some box $Q_{\ve}(0)$ for $\tau>\tau_1$ for suitable $\ve,\tau_1>0$. We conclude that $v(y',\tau)\geq c_1$  for $y'\in Q_{\ve}(0)$ and for $\tau>\tau_1$.

iv) Next, we show how  we may eliminate the need for a translation. Indeed,  the present lower estimate  means that  $v(y',\tau)$ is positive in a box $Q_\ve(0)$ for all large $\tau$, so that $u(x-x_0,t)$ is continuous and  such that
\begin{equation}\label{translationestim}
u(x-x_0,t)\geq \frac{c_{1}}{(t+1)^{\alpha}}
\end{equation}
for large times and for all $x$ belonging to a suitable expanding time-depending anisotropic neighborhood $\mathcal{A}_{t}(x_{0})$ of $x_{0}$, defined through
\[
\mathcal{A}^{\ve}_{t}(x_{0})=\left\{x\in\R^{N}:\,((x_{1}-x_{01})(t+1)^{-\alpha\sigma_{1}},...,(x_{N}-x_{0N})(t+1)^{-\alpha\sigma_{N}})\in Q_{\ve}(0)\right\},
\]
that is
\[
\mathcal{A}^{\ve}_{t}(x_{0})=\left\{x\in\R^{N}:\,|x_{i}-x_{0i}|\leq (t+1)^{\alpha\sigma_{i}}\ve, \forall i=1,...,N\right\}.
\]
Now, choosing $t$ sufficiently large such that $|x_{0i}|\leq (t+1)^{\alpha\sigma_{i}}\ve$ for all $i$, we have that $0\in \mathcal{A}^{\ve}_{t}(x_{0})$ and we can put $x=0$ in \eqref{translationestim}, in order to obtain
\begin{equation}\label{translation}
u(x_0,t)\geq \frac{c_{1}}{(t+1)^{\alpha}}
\end{equation}
for $x_{0}\in \mathcal{A}^{\ve}_{t}(0)$: rephrasing this result in terms of $v$, we find inequality \eqref{88} in a box $Q_{\ve^{\prime}}(0)$, $\ve^{\prime}<\ve$.
\medskip

Points i) to iv) imply that $v(y,\tau)$  is a uniformly non-degenerate solution of the anisotropic equation in a cylinder $Q_\ve=B_\ve(0)\times (\tau_1,\infty)$ if $\ve $ is small.   By standard regularity for such equations we conclude  that $v(y,\tau)$  is H\"older continuous uniformly in space and time in the cylinder $Q_\ve$ for all large $\tau_1$.
In this situation we apply H\"older continuity and Ascoli-Arzelà to conclude that for any sequence $\tau_{n}\to\infty$ there exists some subsequence $\tau_{nk}\to\infty$ with
\[
v(y,\tau_{n_k})\rightarrow F(y)
\]
uniformly in $B_\ve(0\normalcolor)$. The uniqueness of the limit easily  implies the uniform convergence of all the family $v(\cdot,\tau)$ as $\tau\rightarrow\infty$.

(2) Now we examine the uniform convergence in the closed region away from 0 and infinity, $D_{\ve'}=\{y: |y|\ge \ve', \ F_M(y)\ge \ve'\}$ for a small $0<\ve'<\ve$ that we will choose later, where $\ve$ is as before. We argue by contradiction. Let us assume that there is  $h>0$ and a point $y_0 \in D_{\ve'}$ such that
\begin{equation}\label{Linf.contrad1}
|v(y_0, \tau_k)-F_{M}(y_0)| \geq h>0
\end{equation}
along a sequence $\tau_k\to\infty$. To continue we recall  the $L^1$ convergence result just proved, so that for any $\delta>0$  we may take $\tau_1$ large enough so that $\|v(y, \tau)-F_{M}(y)\|_1<\delta$ for every $\tau>\tau_1$. Recall that $F_{M}$ is smooth  on  $D_\ve'$ and we have
$F_{M}(y_0)\ge \ve'$.

\smallskip

(2-i) A first possibility is that  the separation in \eqref{Linf.contrad1} happens downwards:
$$
v(y_0,\tau_k)\le  F_{M}(y_0)-h
$$
for all $\tau_k\to \infty$. By the continuity property of $F_{M}(y)$ we may find a neighborhood $B \subset D_\ve'$ of $y_0$ where
$$
F_{M}( y  )> F_{M}(y_0) -(\ve'/2) \geq\ve'/2 \quad  \text{ for }y\in B.
$$

Now we want to use the partial monotonicity property  of $v(\cdot,\tau)$. Recalling that our initial datum $u_0$ is supported in a box $Q(a)$ of sizes $a=(a_i)$, then Corollary \ref{Cor 3b} applies for all $x_i>a_i$. This means that in term of $v$ the monotonicity holds for $y_i> a_i(t+1)^{-\alpha \sigma_i}$, i.e. outside a contracted box, whose size tends to zero as $t\to\infty$. So the region of (possible) no monotonicity shrinks enough with time and it is contained in a ball centered in the origin of radius $\varepsilon'$ for times big enough. In conclusion by the partial monotonicity property of Corollary \ref{Cor 3b} applied to $v(\cdot,\tau)$ outside the ball of radius  $\varepsilon'$, we find a cone $K$ of positive directions where
$$
v(y,\tau_k)\le  v (y_0,\tau_k)  \qquad \mbox{for all } y\in y_0+K.
$$
In the intersection $ B\cap (y_0+K)$ we get, up to choosing $\ve'<h$,
$$
F_M(y)-v(y,\tau_k)  \geq \left[F_M(y_0)-\ve'/2\right]-v (y_0,\tau_k) \geq h-\ve'/2\ >\ve'/2.
$$
This uniform bound is valid in a fixed set of nonzero measure so that it contradicts the $L^1$ convergence in $\ren$. We conclude that
$v(y,\tau)$ cannot separate from $F_M(y)$ from below as in \eqref{Linf.contrad1}.

\smallskip

(2-ii) In the second step we show that, in the same setting, $v(y,\tau)$ cannot separate from $F_M(y)$ upwards either.   We again argue by contradiction, now we examine any point  $y_0 \in D_{\ve'}$ where
$$
v(y_0, \tau_k)-F_{M}(y_0) \geq h>0
$$
along a sequence $\tau_k\to\infty$. The estimate of difference in a small uniform neighborhood uses  the partial monotonicity property  of $v(\cdot,\tau)$ (see  Corollary \ref{Cor 3b}). Then  we find a cone $K$ of negative directions where
$$
v(y,\tau_k)\ge  v (y_0,\tau_k)  \qquad \mbox{for all } y\in y_0+K.
$$
We conclude as before.

\smallskip

(3) The same argument for small separation from above applies in the set $E_{\ve'} =\{y:\,|y|\geq\ve',\, \ F_M(y)\le \ve'\}$  for a suitably small $0<\ve'<\ve$.
The lack of separation from below is trivial since $v\ge 0$.
Now the proof is complete.   \qed


\section{Expansion of the support for large times}\label{sec.exr.supp}

 Here we examine how the spatial support of a nontrivial solution $u\ge0$ with compactly supported and bounded initial data evolves in time.
In doing that we continue the study of Section \ref{ssec.support} for the fundamental solution. Here we consider a solution $u(x,t)$ under the conditions of Theorem \ref{thmasympto.cs} and denote by $v(y,\tau)$
the  corresponding renormalized  solution.

Let us recall some notations for the sets that will deal with, the type that was actually adopted in Section \ref{ssec.support} when dealing with self-similar profiles. For a continuous function $f=f(x)$ defined in a set $A\subset \ren$  we denote the positivity set by
$$
\Omega(f)=\{x\in A:  f(x)>0\}.
$$
When $f=F_M$, the fundamental profile, as in Section \ref{ssec.support},  we write $\Omega(F_M)$ for its positivity set.  For the time-dependent functions $U_M(x,t)$ and $u(x,t)$ we write
$$
\Omega(U_M,t)=\{x\in \ren:  U_M(x,t)>0\},  \quad  \Omega(u,t)=\{x\in \ren:  u(x,t)>0\},
$$
where $t>0$ acts as a parameter. After renormalization to $v(y,\tau)$ we consider the positivity set $\Omega(v,\tau)=\{y\in \ren:  v(y,\tau)>0\}$ as a function of $y$  for fixed $\tau>0$.

We have already introduced in Section \ref{ssec.support}  a first scaling transformation  \eqref{S1k} that allows to pass positivity sets from any mass $M$ to mass one, i.e., from  $\Omega(F_M)$ to $\Omega(F_1)$:
$$
\mathcal{S}_k^{(1)}(\Omega(F_1))=\Omega(F_M).
$$
These sets  form a monotone family of sets, continuously increasing with $k$,  that cover the whole space $\ren$.
 The $\Omega(F_M)$ expand along all outgoing directions with an anisotropic rate.
The second scaling transformation \eqref{S2t} relates
$\Omega(U_M,t)$ and $\Omega(F_M)$
$$
\mathcal{S}_t^{(2)}(\Omega( F_M))= \Omega(U_M,t).
$$
Now the $\{\Omega(U_M,t) : t > 0\}$ form an expanding  family of sets that cover
the whole space $\ren$. The $\Omega(U_M,t)$ also expand along all outgoing directions, with a different anisotropic rate.
Finally, the relation \eqref{NewVariables} between $u(x,t)$ and $v(y,\tau)$ implies that
$$
x=( x_1,\cdots,x_N)\in \Omega(u,t) \ \mbox{iff} \ y=( y_1,\cdots, y_N)\in  \Omega(v,\tau) \ \mbox{with} \ x_i=(t+t_0)^{\alpha\sigma_i}y_i,
$$
which is only a small variation of the latter scaling, without importance for large $t$.

\smallskip

In the sequel we will examine the behaviour of the positivity set and the support of the solutions $u$ and $v$ for large times. They will increasingly resemble  the self-similar sets, in the equivalent senses that $\Omega(u,t)$ increasingly resembles $\Omega(U_M,t)$ as $t\to\infty$, and that $\Omega(v,\tau)$ resembles $\Omega(F_M)$ as $\tau\to\infty$.

First, we use Corollary \ref{thmasympto.prel.v} to prove a weaker convergence,  up to constant factors.

\begin{proposition}\label{exp.cs.estim}  Under the conditions of  Proposition \ref{thmasympto.prel} , there are constants $M_1<M<M_2$ such that estimates
\begin{equation}\label{supp.estim11}
\Omega(F_{M_1}) \subset \Omega(v,\tau) \subset \Omega(F_{M_2}),
\end{equation}
if the time $\tau$ must be large enough. By displacement of the origin of time we may assume that this happens for all $\tau>0$ and also that
\begin{equation}\label{supp.est-v}
F_{M_1}(y) \le  v(y,\tau) \le F_{M_2}(y).
\end{equation}
\end{proposition}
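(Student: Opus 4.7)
My plan is to deduce the upper bound directly from Corollary \ref{thmasympto.prel.v} and to obtain the lower bound by combining the explicit lower estimate in that Corollary with a compactness/gap argument exploiting strict monotonicity in mass. The set inclusions will then follow from the pointwise inequalities, since $F_{M_2}$ vanishes outside $\Omega(F_{M_2})$ while $F_{M_1}$ is strictly positive on $\Omega(F_{M_1})$.

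For the upper bound, Corollary \ref{thmasympto.prel.v} already yields $v(y,\tau) \le F_{K_2}(y)$ for $\tau$ large. Setting $M_2 := K_2$ then gives both $v(\cdot,\tau) \le F_{M_2}$ and $\Omega(v,\tau) \subset \Omega(F_{M_2})$.

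For the lower bound, write the lower estimate from Corollary \ref{thmasympto.prel.v} as
\[
L(y,\tau) := (1+t^{-1})^{\alpha}\, F_{K_1}\bigl((1+t^{-1})^{\sigma_i \alpha}(y_i - y_{0,i})\bigr),
\]
so that $v(y,\tau) \ge L(y,\tau)$ for $\tau$ large. As $\tau \to \infty$, the height factor and each anisotropic compression factor tend to $1$ and the shift $y_{0,i} = x_{0,i}(t+1)^{-\sigma_i \alpha}$ tends to $0$. Since $F_{K_1}$ is continuous with compact support, hence uniformly continuous on $\ren$, this yields $\|L(\cdot,\tau) - F_{K_1}\|_{\infty} \to 0$. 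Now I fix any $M_1 < K_1$ and establish a uniform gap $F_{K_1}(y) - F_{M_1}(y) \ge \delta > 0$ on the compact set $\overline{\Omega(F_{M_1})}$. Setting $k = (K_1/M_1)^{1/\beta} > 1$, the scaling identity \eqref{Tk} reads $F_{K_1}(y) = k\, F_{M_1}(k^{-\nu_i} y_i)$; the SSNI property of $F_{M_1}$ then implies $F_{K_1}(y) \ge k F_{M_1}(y) > F_{M_1}(y)$ at every point of $\Omega(F_{M_1})$, while on $\partial \Omega(F_{M_1})$ one has $F_{M_1} = 0$ and $F_{K_1} > 0$ by virtue of the strict inclusion $\overline{\Omega(F_{M_1})} \subset \Omega(F_{K_1})$, which is guaranteed by Proposition \ref{prop.setequiv}. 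Continuity of $F_{K_1} - F_{M_1}$ on the compact set $\overline{\Omega(F_{M_1})}$ then delivers the gap $\delta$.

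Choosing $\tau$ large enough that $\|L(\cdot,\tau) - F_{K_1}\|_{\infty} < \delta$, on $\overline{\Omega(F_{M_1})}$ I obtain $v(y,\tau) \ge L(y,\tau) \ge F_{K_1}(y) - \delta \ge F_{M_1}(y)$, whereas $F_{M_1}(y) = 0 \le v(y,\tau)$ outside this set. Hence $F_{M_1} \le v(\cdot,\tau) \le F_{M_2}$ pointwise on $\ren$ for $\tau$ large, and the positivity of $F_{M_1}$ on $\Omega(F_{M_1})$ forces the inclusion $\Omega(F_{M_1}) \subset \Omega(v,\tau)$. To obtain the estimate ``for all $\tau > 0$'' one shifts the time origin, i.e.\ replaces $t_0$ in \eqref{NewVariables} by a sufficiently large constant, which leaves the form of the rescaled equation unchanged. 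The main delicate point is the uniform gap $\delta$: the strict positivity of $F_{K_1}$ on $\overline{\Omega(F_{M_1})}$ relies on the strict inclusion of the two supports under the anisotropic mass-scaling $\mathcal{S}^{(1)}_k$, for which Proposition \ref{prop.setequiv} is precisely the designed tool.
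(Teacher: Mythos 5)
Your proof is correct, and for the lower bound it takes a genuinely different route from the paper. The paper disposes of this proposition in one remark: the upper bound comes from Corollary \ref{thmasympto.prel.v}, while the lower bound is deduced from the uniform convergence $\|v(\cdot,\tau)-F_M\|_\infty\to 0$ of Theorem \ref{conc.as.csdata} (combined, implicitly, with a gap of the type you construct between $F_M$ and $F_{M_1}$). You instead avoid the $L^\infty$ asymptotic theorem altogether: you show that the shifted, compressed barrier $L(\cdot,\tau)=(1+t^{-1})^\alpha F_{K_1}\bigl((1+t^{-1})^{\sigma_i\alpha}(y_i-y_{0,i})\bigr)$ converges uniformly to $F_{K_1}$ (legitimate, since $F_{K_1}$ is continuous with compact support and $y_{0,i}\to 0$), and then beat it against $F_{M_1}$ for any $M_1<K_1\le M$ via the mass scaling \eqref{Tk} and the SSNI monotonicity, which give $F_{K_1}\ge k\,F_{M_1}>F_{M_1}$ inside $\Omega(F_{M_1})$ and positivity of $F_{K_1}$ on $\partial\Omega(F_{M_1})$. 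What this buys is a more elementary and self-contained statement: your argument only needs the crude sandwich of Proposition \ref{thmasympto.prel}/Corollary \ref{thmasympto.prel.v}, so the proposition could even be proved before the $L^\infty$ convergence result, whereas the paper's one-line deduction leans on the stronger Theorem \ref{conc.as.csdata} (which is in any case proved earlier, so neither route is circular). Two small points to tighten: Proposition \ref{prop.setequiv} compares the open sets $E_{1+c\ve}(\Omega(F_1))$ and ${\mathcal S}^{(1)}_{1+\ve}(\Omega(F_1))$, so to get $\overline{\Omega(F_{M_1})}\subset\Omega(F_{K_1})$ you should add the (easy) step $\overline{\Omega}\subset E_{\lambda}(\Omega)$ for $\lambda>1$, available from the continuous, positive radial parametrization of Proposition \ref{prop.R(e)} — or note that it follows directly from the SSNI "downward-closedness" of the positivity set; and since the statement asks for $M_1<M<M_2$ strictly, take e.g.\ $M_2=K_2+1$ using the monotonicity in mass of Proposition \ref{prop.mon.M}, as $K_2=M$ is not a priori excluded.
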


We stress that the upper bounds are consequence of Corollary \ref{thmasympto.prel.v}, but the lower bounds follow from the $L^\infty$ version of the asymptotic result
Theorem \ref{conc.as.csdata}.

\begin{figure}[t!]
 \centering
 \includegraphics[width=12cm]{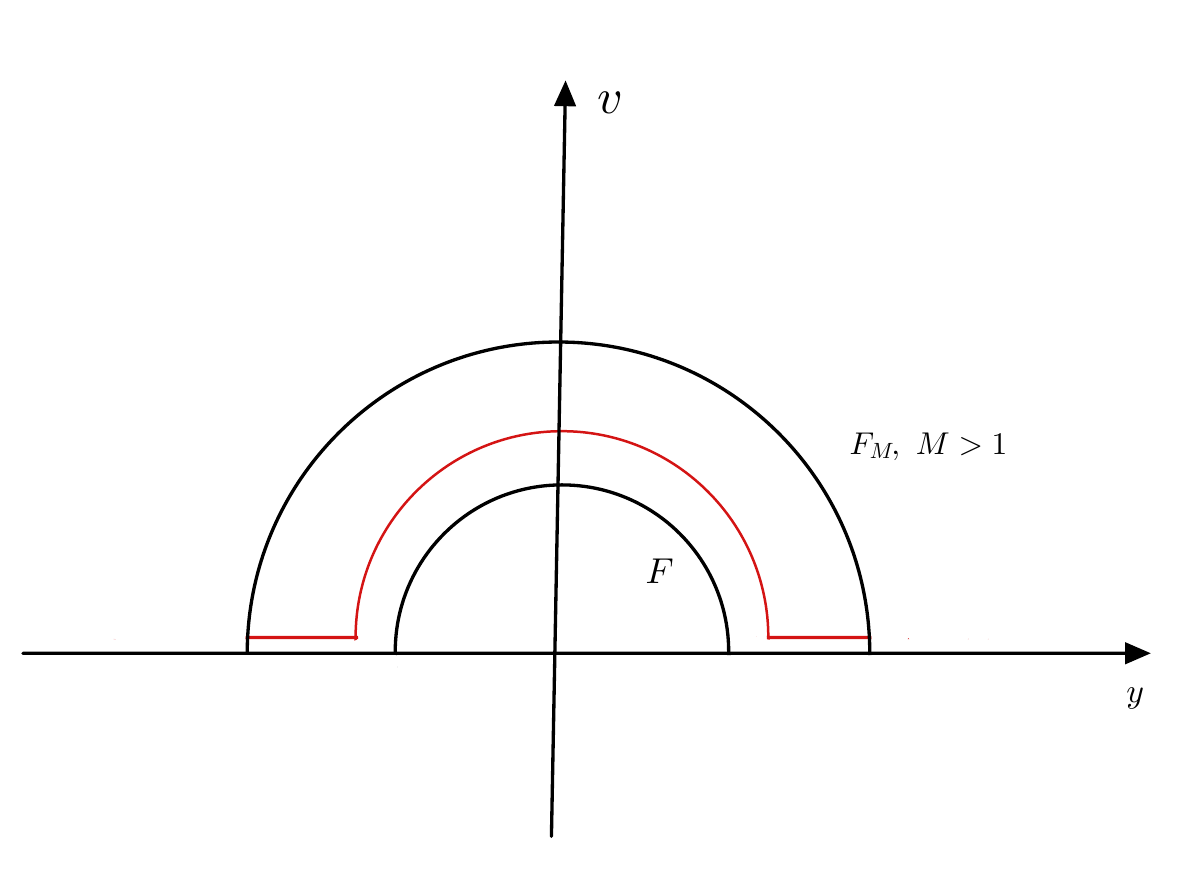}
 \caption{Schematic situation for large times:  The red line is the upper bound for $v(y,\tau)$ for large times. Only one space direction is represented for simplicity. }
\end{figure}

Next, we will prove a much sharper approximation result that needs a finer study in terms of the Hausdorff set distance. We recall that for two sets $A,B\subset\ren$ we define this distance $d_H(A,B)$ as follows. For any $a\in A$, $b\in B$ we the first define the distance from point to set
$$
d(a,B)= \min \{d(a,b):  b\in B \},  \quad
d(b,A)= \min \{d(a,b): a\in A\},
$$
and  then we define the two directional distances
$$
d_1(A,B)=\sup\{d(a,B): a\in A),
$$
which is the sup of the distances from points of $A$ to the set $B$, and
$$
\quad d_2(A,B)=\sup\{d(b,A): b\in B).
$$
where the sup interchanges $A$ and $B$. Then the symmetric distance is defined as
$$
d_H(A,B)=\max\, \{ d_1(A,B), d_2(A,B) \}.
$$
We will work with bounded sets.

\noindent {\bf Remarks.}  (1) It is usual to consider closed sets in these definitions, but in our case we do not have to worry (take closures to pass from positivity sets to supports).

\noindent (2) With the above definitions, it is immediate to see that the family $\{\Omega(F_M): M>0 \}$ is continuous with respect to $M$ and its size increases in a power way with  $M$ in all directions. Moreover, the family  $\{\Omega(U_M,t) : t > 0\}$  is continuous with respect to $t$ and its size increases in a power way with $t$ in all directions.
Moreover, it happens that for $k\sim 1$ and $M\sim 1$
both $\Omega(U_M,t) $ and $\Omega(F_M)$ are comparable if $k-1$ and $t-1$ are small and proportional, see Proposition \ref{prop.setequiv}.

\begin{theorem}\label{exp.cs}  Under the conditions of Proposition \ref{thmasympto.prel}, we have the  set convergence
\begin{equation}\label{supp.estim.sharp}
\lim_{\tau\rightarrow\infty} d_H(\Omega(v,\tau), \Omega(F_M))=0.
\end{equation}
 The same formula works for the supports, $d_H(S(v,\tau), S(F_M))\to 0$.
\end{theorem}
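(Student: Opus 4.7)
The plan is to prove both directional Hausdorff distances tend to zero: $d_{2}(\Omega(v,\tau),\Omega(F_{M}))\to 0$ (every point of $\Omega(F_M)$ is close to $\Omega(v,\tau)$) and $d_{1}(\Omega(v,\tau),\Omega(F_{M}))\to 0$ (every point of $\Omega(v,\tau)$ is close to $\Omega(F_M)$). The result for the supports will follow by passing to closures. The main ingredients are the uniform convergence $v(\tau)\to F_M$ in $L^{\infty}$ of Theorem \ref{conc.as.csdata}, the support sandwich of Proposition \ref{exp.cs.estim}, the scaling comparability of Proposition \ref{prop.setequiv}, and the travelling-wave barrier of Section \ref{sec.upper}.

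For the inner inclusion $d_2\to 0$, fix $\varepsilon>0$ and choose $\delta>0$ small. The compact level set $K_\delta:=\{y:F_M(y)\ge\delta\}$ is $\varepsilon$-dense in $\overline{\Omega(F_M)}$: for any $y\in\overline{\Omega(F_M)}$, the segment from the origin to $y$ stays in $\Omega(F_M)$ by star-shapedness (Proposition \ref{prop.R(e)}); along this segment, SSNI monotonicity makes $F_M$ nondecrease toward $0$, and continuity of $F_M$ (Theorem \ref{fundamental solution}) lets us find a point of $K_\delta$ within distance $O(\delta)$, uniformly in direction by compactness. The $L^\infty$ convergence gives $v(y,\tau)\ge F_M(y)-\delta/2\ge\delta/2>0$ on $K_\delta$ for $\tau\ge\tau_0(\delta)$, so $K_\delta\subset\Omega(v,\tau)$, yielding $d_2\le\varepsilon$ once $\delta$ is chosen small.

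For the outer inclusion $d_1\to 0$, by Proposition \ref{prop.setequiv} it suffices to show that for every $\delta'>0$ there is $\tau^{*}$ with $\Omega(v,\tau)\subset\Omega(F_{M+\delta'})$ for $\tau\ge\tau^{*}$. The strategy is to produce at some large $\tau_0$ the pointwise inequality $v(\cdot,\tau_0)\le F_{M+\delta'}$, after which the stationary character of $F_{M+\delta'}$ and the comparison principle (Theorem \ref{contraction}) propagate it forward in time. The scaling identity $F_{M+\delta'}(y)=k\,F_M(y_i/k^{\nu_i})$ with $k=((M+\delta')/M)^{1/\beta}>1$, combined with the SSNI monotonicity of $F_M$, gives $F_{M+\delta'}(y)\ge k F_M(y)$ on $\Omega(F_M)$, hence a gap of order $(k-1)F_M(y)$ in the interior; this absorbs the $L^\infty$ error $\|v(\tau_0)-F_M\|_\infty$ on any level set $\{F_M\ge\delta\}$ once $\tau_0$ is large.

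The principal difficulty is the annular zone $\Omega(F_{M_2})\setminus\Omega(F_{M+\delta'})$, where $F_{M+\delta'}$ vanishes while $v(\cdot,\tau_0)$ can still be (slightly) positive; here the $L^\infty$ bound alone cannot force pointwise comparison. To pin the support of $v$ inside $\Omega(F_{M+\delta'})$ I would adapt the one-dimensional travelling-wave barrier $\bar u^{m_i-1}=c(m_i)A(At+K-x_i)_+$ of Section \ref{sec.upper}: in the physical variables, $u(\cdot,t_0)$ is uniformly small (and vanishing outside a known box) on a half-space placed just outside the physical image of $\Omega(F_{M+\delta'})$, so the barrier controls the maximal expansion of $\mathrm{supp}(u)$; pulled back to the rescaled variables through the factor $(t+1)^{-\alpha\sigma_i}$, and using $\alpha\sigma_i\in(0,1)$ (which holds under (H1)--(H2) and was verified for Corollary \ref{coroll.O U}), this confines $\mathrm{supp}(v(\tau))$ to any prescribed anisotropic enlargement of $\overline{\Omega(F_M)}$ for $\tau_0$ large, in particular inside $\Omega(F_{M+\delta'})$. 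Combining the interior gap estimate with this support control gives $v(\cdot,\tau_0)\le F_{M+\delta'}$ pointwise, and the comparison principle concludes.
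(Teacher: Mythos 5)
Your inner estimate (every point of $\Omega(F_M)$ is eventually close to $\Omega(v,\tau)$) is essentially the paper's argument: level sets $\{F_M\ge\delta\}$ are captured by the uniform convergence of Theorem \ref{conc.as.csdata} and exhaust $\Omega(F_M)$ up to a thin boundary layer. The reduction of the outer estimate to a single-time pointwise bound $v(\cdot,\tau_0)\le F_{M+\delta'}$, propagated forward because $F_{M+\delta'}$ is a stationary solution of the rescaled equation, is also the right skeleton (it is the paper's Proposition \ref{reduction} plus step (vii)). The gap is in the one step where all the difficulty is concentrated: confining $\mathrm{supp}\,v(\tau)$ inside $\Omega(F_{M+\delta'})$ by means of the travelling-wave barrier of Section \ref{sec.upper}.

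That step fails as stated, for two concrete reasons. First, the travelling wave $\overline u^{m_i-1}=c(m_i)A(At+K-x_i)_+$ vanishes ahead of its front, so to have $u(\cdot,t_0)\le\overline u(\cdot,0)$ on the half-space you must place the front beyond the whole support of $u(\cdot,t_0)$; but all you know at time $t_0$ is that $u$ vanishes outside the image of $\Omega(F_{M_2})$ and is merely \emph{small} (not zero) in the annulus between the images of $\Omega(F_{M+\delta'})$ and $\Omega(F_{M_2})$. Hence the barrier cannot be ``placed just outside the physical image of $\Omega(F_{M+\delta'})$'': the initial comparison is violated wherever $u>0$ ahead of the front, and placing the front beyond the $M_2$-image only reproduces the bound you already have. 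Second, even with a correct placement, the travelling-wave front advances linearly in physical time, so in rescaled variables it sits at $y_i\sim At\,(t+1)^{-\alpha\sigma_i}\sim A\,t^{1-\alpha\sigma_i}\to\infty$ (precisely because $\alpha\sigma_i<1$); over an unbounded time horizon it therefore confines nothing near $\Omega(F_M)$, and it is useful only on a bounded window --- which is exactly how Section \ref{sec.upper} uses it, taking $R$ large in terms of a \emph{fixed} $\tau_1$. To get the single-time inequality you want to propagate you would need the support already reduced at that time, which is circular, or an iteration over time windows with quantitative bookkeeping of the accumulated errors --- an argument you do not supply and whose convergence to the sharp mass $M+\delta'$ is not evident. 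The paper avoids this by using a barrier with the correct self-similar geometry: the time-shifted fundamental solution $\tilde u(x,t)=U_1(x,t+s_0)$, whose rescaled positivity set \emph{shrinks} toward $\Omega(F_1)$ as $\tau\to\infty$, compared with $v(\cdot,\tau+k)$ in the annulus $\Omega(F_{M_2})\setminus\Omega(F_{M_4})$ over a long but finite interval $(0,\tau_1)$, with separate checks at the initial time, on the outer boundary (where $v$ vanishes by Proposition \ref{exp.cs.estim}) and on the inner boundary (where $\tilde v\ge\delta$ up to time $\tau_1$); this is the content of Proposition \ref{reduction}, and without it (or a genuinely worked-out substitute) your outer inclusion is unproved.
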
{}


\subsection{Proof of Theorem \ref{exp.cs} }

By using the  mass rescaling of solutions studied in Subsection \ref{sec scaling} we may assume without loss of generality that $M=1$ and write $F_M= F_1$. We  will  call $\Gamma(F_1)$ the boundary of \ $\Omega( F_1)$, or equivalently, the boundary of the support of $F_1$ (since $S(F_1)$ is the closure of $\Omega(F_1)$).

(1) We want to establish first the approximation of  the family of sets \
$\Omega(v,\tau)$ \ to \ $\Omega(F_1)$ from inside as $\tau$ goes to infinity. We mean that the points of $\Omega(v,\tau)$  cover all the space  inside of $\Omega(F_1)$ but for points at a small distance of $\Gamma(F_1)$, and that distance goes to zero with $\tau$. In other words, we want to prove that
\begin{equation}\label{Th 7.2 formula 1}
d_1(\Omega(F_1), \Omega(v,\tau)) \to 0 \qquad \mbox{as } \ \tau\to 0.
\end{equation}
Let us introduce the interior subsets $G_\ve =\{y: F_1(y)\ge \ve\} $ for $\ve>0$. We easily conclude from the uniform convergence result in Theorem \ref{conc.as.csdata} that for all large $\tau$ we have
$$
\Omega(v,\tau)\supset G_\ve,
$$
in other words $d_1(G_\ve, \Omega(v,\tau))=0$.    On the other hand,
$$
d_1(\Omega(F_1), G_\ve) = d_1(\Gamma(F), G_\ve)\to 0 \qquad \mbox{as } \ \ve\to 0,
$$
\noindent which follows from elementary topology for continuous functions. By the triangle inequality
$$
d_1(\Omega(F_1), \Omega(v,\tau)) \to 0 \qquad \mbox{as } \ \tau\to 0,
$$
This is the desired statement, there are no points of $\Omega(F_1)$ that stay far from $\Omega(v,\tau)$ as $\tau\rightarrow\infty$.

\medskip

(2) The approximation of  $\Omega(v,\tau) $  to $\Omega(F_1)$  from outside,
\begin{equation}\label{supp.d21}
 d_2(\Omega(F_1), \Omega(v,\tau))  = d_1(\Omega(v,\tau), \Omega(F_1)) \to 0 \qquad \mbox{as } \ \tau\to 0,
\end{equation}
is more delicate and takes several steps. Let us present the difficulty that might arise in this kind of limit problems: a family of positivity sets of a converging family of functions may fail to approximate the limit when the convergence of supports is examined, and this is because of the presence of  ``thin positivity tails'' of evanescent intensity that must be however counted  in the sense of positive sets even if their mass is small.

The idea we use to avoid such failure of  set convergence (i.e., convergence in the sense of Hausdorff distance of sets) is to combine the already established uniform convergence of $v(y, \tau)$ in the positivity set of $F_1$ with a new comparison in the exterior of $\Omega(F_1)$ with an astute upper barrier.
In that direction we recall that by our starting assumption in Proposition \ref{exp.cs.estim} is that there exists a mass $M_2>1$ such that $ v(y,\tau)\le F_{M_2}(y)$ and then $\Omega(v,\tau)$ is contained in  $\Omega(F_{M_2})$  for $\tau$ big enough. This means that for all large $\tau$ the solution vanishes outside a large set, more precisely
\begin{equation}\label{supportM2}
v(y,\tau)=0 \quad \mbox{ for } \ y\not\in \Omega(F_{M_2}).
\end{equation}
 We only have to reduce the role of the mass $M_2$ in these statements to a smaller size of mass $M_3$ near $M=1$ for the desired result \eqref{supp.d21} to follow. The reduction part is delicate and is proved next.

\begin{proposition}\label{reduction}  In the present situation, for every $M_3>1$ there exists a time $\tau_1$ such that we have the estimate
\begin{equation}\label{supp.estim111}
v(y,\tau)\le F_{M_3}(y) \quad  \text{ for all }  y\in \ren  \ \text{ and }\tau\ge \tau_1.
\end{equation}
It follows that $\Omega(v,\tau)\subset \Omega(F_{M_3})$ for all $\tau$  large enough.
\end{proposition}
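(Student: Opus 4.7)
The plan is to produce a single time $\tau_1$ at which $v(\cdot,\tau_1)\le F_{M_3}$ holds pointwise on $\mathbb{R}^N$; the conclusion for all $\tau\ge\tau_1$ will then follow from the $L^1$-contraction of Theorem \ref{contraction} applied with $F_{M_3}$ regarded as a stationary solution of the rescaled equation \eqref{APMs}. Indeed, the nonnegative functional $\tau\mapsto\int_{\mathbb{R}^N}(v(\cdot,\tau)-F_{M_3})_+\,dy$ is nonincreasing in $\tau$, so if it vanishes at $\tau_1$ it remains zero thereafter; combined with the continuity of $v$ and of $F_{M_3}$, this gives the pointwise inequality $v\le F_{M_3}$ at every subsequent time.

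To produce such a $\tau_1$, I would split $\mathbb{R}^N$ into an interior region and an exterior region relative to $\Omega(F_{M_3})$. Pick an auxiliary mass $M_4$ with $1<M_4<M_3$. By the strict monotonicity of Proposition \ref{prop.mon.M} together with the continuity of the profiles in Theorem \ref{fundamental solution}, $\overline{\Omega(F_{M_4})}$ is a compact subset of $\Omega(F_{M_3})$ and the quantity
\[
\eta:=\min_{y\in\overline{\Omega(F_{M_4})}}\bigl(F_{M_3}(y)-F_1(y)\bigr)
\]
is strictly positive. The $L^\infty$ convergence from Theorem \ref{conc.as.csdata} then yields some $\tau_1$ so large that $\|v(\cdot,\tau)-F_1\|_\infty<\eta$ for all $\tau\ge\tau_1$, whence $v(y,\tau)\le F_1(y)+\eta\le F_{M_3}(y)$ on the compact set $\overline{\Omega(F_{M_4})}$.

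The main obstacle is the complementary region $\mathbb{R}^N\setminus\overline{\Omega(F_{M_4})}$: here $F_{M_3}$ is either zero or arbitrarily small near its own free boundary, so the $L^\infty$ convergence alone cannot force $v\le F_{M_3}$, and I need to establish the genuine support shrinkage $\mathrm{supp}\,v(\cdot,\tau_1)\subset\overline{\Omega(F_{M_4})}$ starting from the a priori bound $\mathrm{supp}\,v\subset\overline{\Omega(F_{M_2})}$. My strategy would be to exploit the contracting drift term $\alpha\sigma_i(y_iv)_{y_i}$ in \eqref{APMs} by means of explicit one-dimensional stationary supersolutions of the rescaled equation, namely
\[
\phi_i(y_i;C_i)=\Bigl(\tfrac{\alpha\sigma_i(m_i-1)}{2m_i}(C_i-y_i^2)_+\Bigr)^{1/(m_i-1)},
\]
which solve \eqref{APMs} in the coordinate $y_i$ alone, vanish outside $\{|y_i|\le\sqrt{C_i}\}$, and whose peaks can be calibrated against the decreasing quantity $\|v(\cdot,\tau)\|_\infty\to\|F_1\|_\infty$ supplied by the $L^\infty$ convergence. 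Combining these coordinate-wise barriers with the partial monotonicity of $v$ outside the initial support box provided by Corollary \ref{Cor 3b}, comparison in each axis direction compresses $\mathrm{supp}\,v(\cdot,\tau)$ into a tight circumscribing box; an application of the anisotropic equivalence of expansion families from Proposition \ref{prop.setequiv} then identifies this box with a set contained in $\Omega(F_{M_4})$ for $\tau$ sufficiently large. Putting the interior and exterior estimates together gives $v(\cdot,\tau_1)\le F_{M_3}$ on all of $\mathbb{R}^N$, and the first paragraph delivers the conclusion.
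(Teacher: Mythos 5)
Your interior estimate and your closing step (propagating a one-time inequality $v(\cdot,\tau_1)\le F_{M_3}$ forward by comparison/$T$-contraction, $F_{M_3}$ being a stationary solution of \eqref{APMs}) are sound and coincide with the paper's scheme; note only that the positivity of your $\eta$ uses more than Proposition \ref{prop.mon.M}, namely the scaling identity $F_{M_3}(y)=k_3F_1(k_3^{-\nu_1}y_1,\dots,k_3^{-\nu_N}y_N)$ together with the SSNI property and the radial description of $\Omega(F_1)$ from Proposition \ref{prop.R(e)}. The genuine gap is in the exterior step, which is the real content of the proposition. Your barriers $\phi_i(y_i;C_i)$ are not supersolutions of the rescaled equation \eqref{APMs}: since $\phi_i$ is independent of $y_j$ for $j\ne i$, each drift term with $j\ne i$ contributes $\alpha\sigma_j(y_j\phi_i)_{y_j}=\alpha\sigma_j\phi_i$, and since $\sum_j\sigma_j=1$ with every $\sigma_j\in(0,1)$ one finds
\[
\sum_{j=1}^N\Bigl[(\phi_i^{m_j})_{y_jy_j}+\alpha\sigma_j\,(y_j\phi_i)_{y_j}\Bigr]=\alpha(1-\sigma_i)\,\phi_i\;\ge 0,
\]
with strict inequality where $\phi_i>0$. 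So $\phi_i$ is a stationary \emph{sub}solution; the zeroth-order creation terms coming from the other directions have the wrong sign for you, and comparison from above fails.

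Even if the coordinate-wise barrier were corrected, the strategy of ``compressing the support into a tight circumscribing box'' cannot conclude. Any box containing $\mathrm{supp}\,v(\cdot,\tau)$ must, for large $\tau$, essentially contain the circumscribing box of $\Omega(F_1)$ from Corollary \ref{coroll.OF} (by the part of the convergence you already have), and the corners of that box lie well outside the slightly dilated set $\Omega(F_{M_4})$: the optimal constants are attained on the axes and $F_1$ is SSNI, so the corners are not even in $\overline{\Omega(F_1)}$ (in the isotropic case this is the cube circumscribed to a ball versus a marginally larger ball). Proposition \ref{prop.setequiv} only compares the two anisotropic dilation families of $\Omega(F_1)$ with each other; it does not identify boxes with $\Omega(F_{M_4})$. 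In short, axis-wise boxing discards exactly the off-axis geometry that the proposition must control, so you never reach $v\le F_{M_3}$ outside $\overline{\Omega(F_{M_4})}$. The paper's mechanism for this step is different and genuinely $N$-dimensional: it takes the time-shifted fundamental solution $\tilde u(x,t)=U_1(x,t+s_0)$, whose rescaled version $\tilde v(y,\tau)$ has support shrinking to $\Omega(F_1)$, and performs a parabolic comparison with $v_k(y,\tau)=v(y,\tau+k)$ on the anisotropic annulus $A'=\Omega(F_{M_2})\setminus\Omega(F_{M_4})$ over a finite time window $(0,\tau_1)$, using the uniform convergence of Theorem \ref{conc.as.csdata} to make $v_k$ small on the initial slice and on the inner boundary, and the a priori bound \eqref{supportM2} on the outer boundary. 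Some such anisotropically shaped, shrinking supersolution is what your argument is missing.
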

\medskip

Recall that we are working with solutions of mass 1 and then the desired improved bound $M_3>1$ is close to 1. Moreover $M_3$ has as upper bound $M_2$, the mass that appears in from Proposition \ref{reduction}.

Step 2 of the Theorem \ref{exp.cs} follows immediately by Proposition \ref{reduction} recalling that
 $$
 d_1(\Omega(F_{M_3}),\Omega(F_{1}))\to 0 \quad \mbox{as } M_3\to 1.
 $$
 Then the proof of Theorem \ref{exp.cs} is complete if Proposition \ref{reduction} is proved.

 \noindent{\sl Proof of the Proposition \ref{reduction}}.
The proof is divided in some steps.

(1) By the already mentioned expansion property of the self-similar solution with mass $1$, we may find a time shift $s_0$ such that
the support of $U_1(x, s_0)$  strictly includes $\Omega(F_{M_2})$ (which in turn is a fixed expansion of $\Omega(F_1)$ by scaling $\mathcal{S}_k^{(1)}$  defined in \eqref{S1k}).
Then we may find $\ve_0 >0$ such that in $\Omega(F_{M_2})$ we have uniform positivity
\begin{equation}\label{U varepsilon}
U_1(x,s_0)= s_0^{-\alpha}F_1( x_1\,s_0^{-\alpha \sigma_1},\cdots, x_N\,s_0^{-\alpha \sigma_N})\ge \ve_0,
\end{equation}
\noindent where $\ve_0>0$ depends on $s_0$.
We will use this function to define the useful barrier $\tilde u(x,t)$ with origin of time at $t=0$, using shifting in time, i.e.,
$\tilde u(x,t)=U_1(x,t+s_0).$ The corresponding $v$-solution is
$$
 \tilde  v(y,\tau)=(t+1)^{ \alpha}\tilde u(x,t), \quad x_i=y_i(t+1)^{\alpha \sigma_i}, \quad \tau=\log(t+1).
$$
Working out the details we get
$$
\tilde  v(y,\tau)=((t+s_0)/(t+1))^{-\alpha}F_1(
y_1\,(t+1)^{\alpha \sigma_1}(t+s_0)^{-\alpha \sigma_1},
\cdots, y_N\,(t+1)^{\alpha \sigma_N}(t+s_0)^{-\alpha \sigma_N} ).
$$
Notice that the expansion factor is now $(t+s_0)/(t+1)\to 1$ as $t\to\infty$,  then  the positivity set of $\tilde  v(y,\tau)$ shrinks with time towards $\Omega(F_1)$ as $\tau\to\infty$.

\begin{figure}[t!]
 \centering
 \includegraphics[width=12cm]{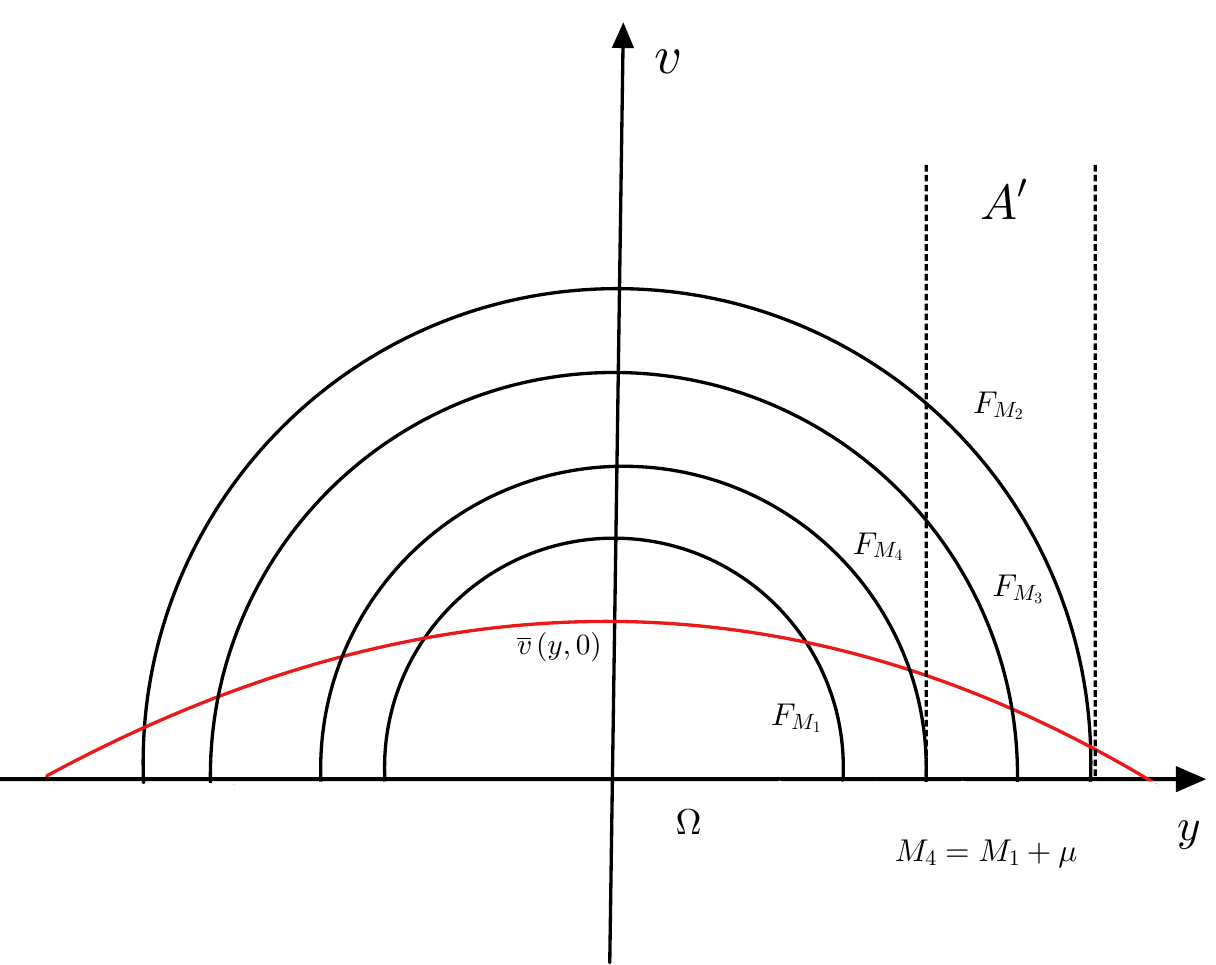}
 \caption{Schematic situation for the comparison proof} \label{anullus}
\end{figure}

Indeed, the support of $\tilde  v(y,\tau)$ is an expansion of the support of $ F_1$ (with scaling of type $\mathcal{S}_k^{(2)}$  defined in \eqref{S2t}) and the argument is just observing that the distance between any point $y$ in the support and the expanded point
$$
\tilde y= (y_1\,((t+s_0)/(t+1))^{\alpha \sigma_1}
\cdots, y_N\,((t+s_0)/(t+1))^{\alpha \sigma_N})
$$
goes to zero uniformly with $t\to\infty$.

(2) In the next step we compare $\tilde v (y,\tau)$ with the original solution $v$ outside of $\Omega(F_{1})$. This comparison is delicate and needs several modifications. First, we will compare  after performing on $v$ a time displacement of the form
$$
v_k(y,\tau)=v(y,\tau+k)
$$
for some $k>0$ possibly very large (see the argument below to find the role of $k$).

A very important technical detail concerns the domain where comparison can be successfully performed. We want to compare in a region where $v_k$ is small for large $k$. This region cannot contain $\Omega(F_{1}) $ since  $v_k$ converges uniformly to $F_1$ which is positive there (by Theorem \ref{conc.as.csdata}). Moreover, by the uniform convergence in this region we can assure that for $k$ large and for all $y\in \Omega(F_{1})$
\[
v_{k}(y,\tau)<F_{1}(y)+\ve
\]
where we can choose $0<\ve\le \min_{\Omega(F_{1})}[F_{M_3}(y)-F_{1}(y)]$, thus
\begin{equation}
v_{k}(y,\tau)<F_{M_3}(y) \text{ for all }y\in \Omega(F_{1}).
\label{compF_{3}}
\end{equation}

The first idea is to make a comparison in a region where $v_{k}$ is small is to study  the anisotropic annular region $A=\Omega(F_{M_2})\setminus \Omega(F_{1})$, see Figure \ref{anullus}. This is essentially correct, but later we  find a difficulty  that forces us to use a slightly smaller annular domain, $A'=\Omega(F_{M_2})\setminus \Omega(F_{M_4})$  for certain times $0<\tau<\tau_1$. Here   $\tau_1$  will be large as needed and
$M_4>1$ must be much closer to 1 than $M_3$ (we will adjust all the quantitative aspects during the proof).

We are ready to check the conditions for parabolic comparison.

$\bullet$  We recall that  in  $A'$ we  have uniform positivity for  $\tilde   v$ at $\tau=0$  by
\eqref{U varepsilon}:
$$
\tilde   v (y,0)=s_0^{-\alpha}F_1( y_1\,s_0^{-\alpha \sigma_1},\cdots, y_N\,s_0^{-\alpha \sigma_N})\ge \ve_{\magenta0}.
$$

 Because  of the uniform convergence (from the asymptotic Theorem \ref{conc.as.csdata}) we can take $k$ large enough so  that
$$
v_k(y,0)=v(y,k)\le \delta  \quad \mbox{in } A'
$$
for some $\delta < \ve_{\magenta0}\le \tilde  v (y,0)$. Recall that in this region $F_{1}(y)=0$. This $\delta$ will be subject to another condition below.

(ii) On the outer lateral  boundary, that is on the boundary of $\Omega(F_{M_2})$, we have
$v_k\le  \tilde v $  for $k$ big enough  because $v_k$ is zero by virtue of the upper estimate
in Proposition  \ref{exp.cs.estim}, see \eqref{supportM2}.

(iii) A more delicate argument is needed at the inner lateral boundary, the boundary of $\Omega(F_{M_4})$, where $v_k$ is maybe not zero. Let us examine both functions. We know that $v_k$ is uniformly small outside of $\Omega(F_1)$, $0\le v_k\le \delta$ when $k$ is large enough, because of the asymptotic  result of Theorem \ref{thmasympto.cs},  and we may take $\delta$ much smaller than $\ve_{0}$. On the other hand, we have said that the positivity set of  $\tilde  v (y,\tau)$  shrinks to become uniformly $\Omega(F_{1})$ as $\tau\to\infty$, so the comparison must fail when  $\tau$ is too large.

(iii') This difficulty is repaired by fixing an upper bound for $\tau$ in a quantitative way as follows. Let us fix $M'=1+\mu$ (very close to 1). We wait  until  the border of the (shrinking family) $\Omega(\tilde v,\tau)$ touches (from outside) the border of $\Omega(F_{M'})$, say at the time $\tau_1$.
Recall that both anisotropic sets have different shapes but they are comparable by Proposition \ref{prop.setequiv},  up to constant factors that depend on the $m_i$.
Indeed, since we take $\mu$ very small, this means that  $\Omega(F_{M'})$ is a very small expansion of $\Omega(F_1)$ with expansion  factors comparable to $1+c_i\mu$ in every direction, which means that $\tau_1=\tau_1(\mu)$ must be very large, depending on $\mu$. Then, $\Omega(\tilde v,\tau_1)$ is also a very small expansion of $\Omega(F_1)$ with factors comparable to $1+d_i\mu$ in every direction.

For later use, we also need $\mu$  to be small enough so that $\Omega(\tilde  v,\tau_1)\subset \Omega(F_{M_3})$ (recall that $\tilde  v(y,\tau)$  tends towards $\Omega(F_1)$ as $\tau\to\infty$ in a uniform  way).

(iii'') We now fix the inner boundary as $\Gamma(F_{M_4})$ with $M_4<M'$ so close to 1 that (see Figure \ref{SetInclusions})
\begin{equation}\label{99}
\Omega(F_{M_4}) \subset \Omega(\tilde v,\tau_1+1).
\end{equation}
\begin{figure}[t!]
 \centering
 \label{figureanullus}
 \includegraphics[width=10cm]{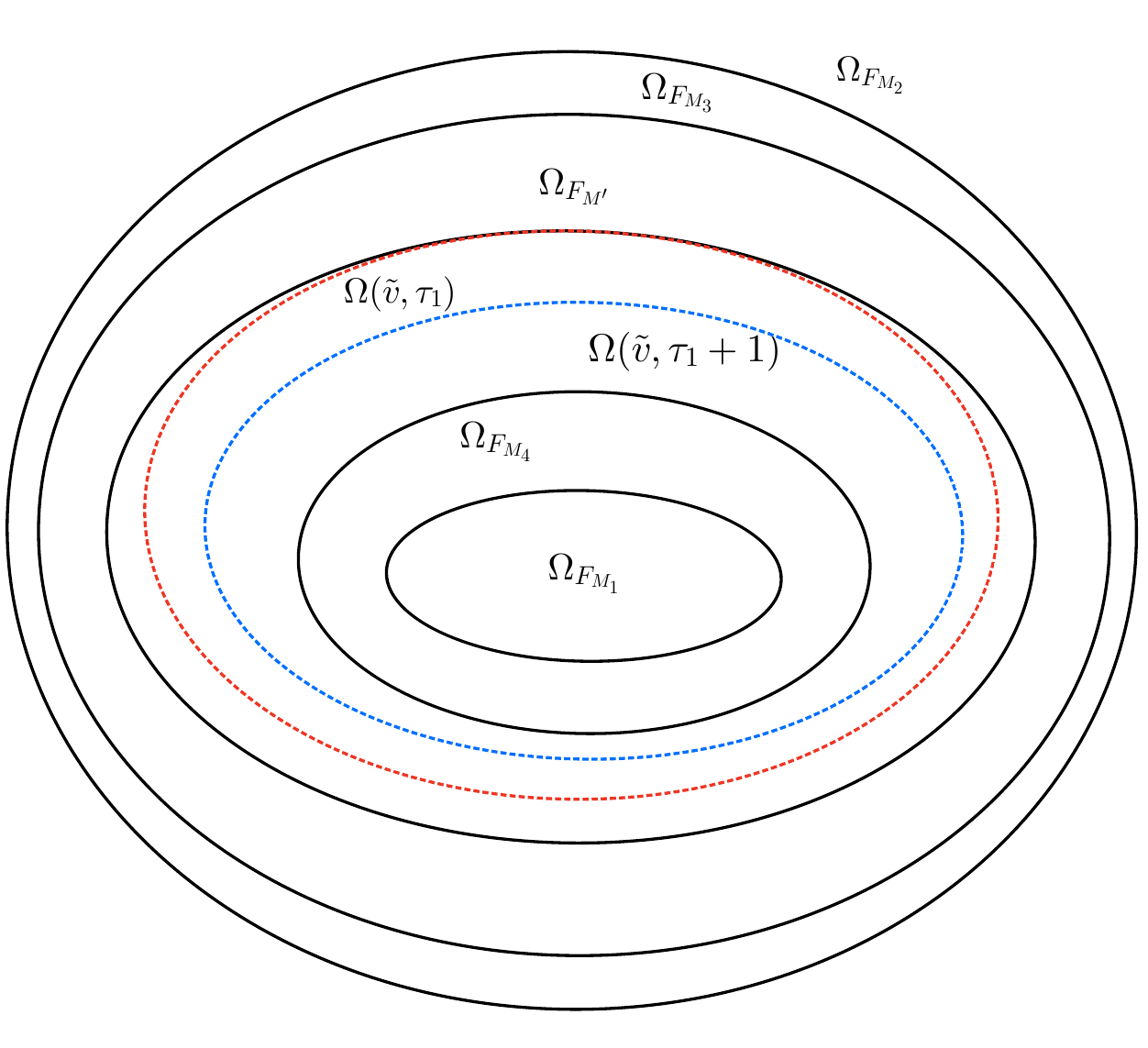}
 \caption{Scheme of the dynamics of $\tilde{v}$ at time $\tau_{1}$}\label{SetInclusions}
\end{figure}
Since $\Omega(\tilde v,\tau_1+1)$ is strictly contained in $\Omega(\tilde v,\tau_1)$ we see that  $\tilde  v (y,\tau_1)$ is strictly positive in $\Gamma(F_{M_4})$, and the same bound is true for all times $0<\tau<\tau_1$ by inspection of the formula (which deals with monotone values at shrinking points). Indeed, the ratio
\[
g(\tau)=\frac{e^{\tau}}{e^{\tau}+s_{0}-1}
\]
entering in the definition of $\tilde{v}$ is increasing for $s_{0}>1$, therefore for $\tau<\tau_{1}$, by the SSNI property of $F_{1}$ we have
\[
\tilde v (y,\tau)\geq \frac{1}{(t_{1}+1)^{\alpha}}\tilde{v}(y,\tau_{1}):=\delta,
\]
where $\tau_{1}=\log(1+t_{1})$.
This implies that $\tilde  v (y,\tau)$ will be uniformly larger than $\delta>0$  on the inner boundary for all $0<\tau\le\tau_1$, and  $\delta$ is small enough depending on the $\mu$ and $M_4$ chosen above.

Recalling that by points (i) and (iii) we were free to chose the $\delta$ appearing in the estimate for $v_k$, we may apply the
parabolic comparison principle to conclude that  $v_k(y,\tau)\le  \tilde  v (y,\tau)\blue$ in $A'\times (0,\tau_1)$  for large $k$.
It follows that there exists $\tau_1$  such that
 $$
 \Omega(v,\tau_1+k)\subset \Omega(\tilde  v,\tau_1)\quad  \hbox{for large } k.
 $$
 Indeed, if $y\in \Omega(v,\tau_1+k) $ and $y\in \Omega(F_{M_4})$ then  by \eqref{99}  we have $y\in  \Omega(\tilde v,\tau_1+1)\subset  \Omega(\tilde v,\tau_1) $. If $y\not\in \Omega(F_{M_4})$,  we must have $y\in \Omega(F_{M_2}) $ (because of \eqref{supportM2}), hence $y\in A'$ and by the comparison $0<v(y,\tau_{1}+k)\leq \tilde v(y,\tau_1) $.

(iv) To end the comparison of supports we observe that for $\mu\to 0$ we have $\tau_1\to\infty$ so using $\mu$ small we may find
\begin{equation}
 \Omega(\tilde  v,\tau_1)\subset \Omega(F_{M_3}), \qquad  \tilde v(y,\tau_1)\le F_{M_3}(y) \quad \mbox{for all } y.\label{comptauM3}
\end{equation}
This follows from examining the scalings with small parameters involved in $\tilde v$ and $U_{M_3}$ and using
Proposition \ref{prop.setequiv} that proves that both scalings in time and mass  are mutually comparable.  Putting things together, we conclude that
 \[
 \Omega(v,\tau_1+k)\subset\Omega(\tilde  v,\tau_1)\subset \Omega(F_{M_3})\quad  \hbox{ for } k \hbox{ large}.
 \]

(vi) To finish the argument we want to make sure that
$$
 v(y,\tau_1+k)\le F_{M_3}(y) \quad \mbox{for all } y
 $$
and the same large $k$. Take $y\in \Omega(v,\tau_1+k)$. If $y\in\Omega(F_{1})$ then by \eqref{compF_{3}} the previous inequality holds for $k$ large. If we have $y\in A'$ the comparison between $v_{k}$ and $\tilde{v}$ holds, so it works (see \eqref{comptauM3}). Finally, if $y\in \Omega(F_{M_4})\setminus \Omega(F_{1})$, we have for large $k$ that $v_{k}(y,\tau_1)\le \delta\le F_{M_3}(y)$.

 (vii) Once the inequality between two solutions (i.e., $ v(y,\tau_1+k)\le F_{M_3}(y)$ holds for all $ y$)
 is shown to hold in the whole space at one time $\tau_1+k$, for conveniently large $k$ and $\tau_1$, it will hold forever afterwards,
 for $ \tau\ge \tau_1$.

 For the readers's convenience we record the choosing order for the different constants in the proof: given the solution $v$ and the masses $M_2>M_3>1$, we have chosen first $s_0$, then $\ve_{0}$, then $\mu$ quite small, then $\tau_1$, then $M_4$ and $A'$, then $\delta$ and $k$.
 \qed

\subsection{Convergence of the free boundaries}\label{sec.asympII}

We now formulate the previous asymptotic result in terms of the free boundaries. As in Proposition \ref{thmasympto.prel} and  Theorem \ref{exp.cs} we consider a solution  $u(x,t)$  of the Cauchy problem for equation \eqref{APM} with nonnegative initial  datum  $u_{0}\in L^{1}(\R^{N})$. We also assume that $u_0$ is bounded and compactly supported in a ball of radius $R_0$. Both restrictions (H1) and (H2) on the exponents of  \eqref{APM} are assumed. We let $v(y,\tau)$ be the renormalized solution according to \eqref{NewVariables}.

\begin{theorem}\label{exp.fb}  Under the above conditions, for every $\ve>0 $ there is a time $\tau(\ve)$  large enough such that for $\tau\ge \tau(\ve)$
\begin{equation}\label{supp.fb1}
\Gamma(v,\tau)\subset \Omega(F_{M+\ve})\setminus \Omega(F_{M-\ve})\,,
\end{equation}
where $M>0$ is the mass of $u_0$. It follows that
\begin{equation}\label{supp.estim.sharp}
\lim_{\tau\rightarrow\infty} d_H(\Gamma(v, \tau), \Gamma(F_M))=0.
\end{equation}
\end{theorem}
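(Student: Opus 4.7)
The plan is to sandwich $\Omega(v,\tau)$ between $\Omega(F_{M-\ve})$ from inside and $\Omega(F_{M+\ve})$ from outside for large $\tau$; this traps $\Gamma(v,\tau)$ inside the anisotropic annulus $\Omega(F_{M+\ve}) \setminus \Omega(F_{M-\ve})$, after which the Hausdorff convergence will follow from the continuity in $M$ of the family $\{\Omega(F_M)\}$, which itself is a consequence of the mass scaling \eqref{S1k} together with Proposition \ref{prop.setequiv}.

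For the outer inclusion I would fix an intermediate mass $M_3 \in (M, M+\ve)$ and invoke Proposition \ref{reduction}, obtaining $\Omega(v,\tau) \subset \Omega(F_{M_3})$ for $\tau \geq \tau_1(\ve)$; since $M_3 < M+\ve$, the strict anisotropic expansion provided by \eqref{S1k} gives $\overline{\Omega(F_{M_3})} \subset \Omega(F_{M+\ve})$, and therefore $\Gamma(v,\tau) \subset \overline{\Omega(v,\tau)} \subset \Omega(F_{M+\ve})$. For the complementary inner bound I would use that $\overline{\Omega(F_{M-\ve})}$ is compactly contained in $\Omega(F_M)$, so by continuity of $F_M$ there exists $c_\ve > 0$ with $F_M \geq c_\ve$ on this set; the uniform convergence $\|v(\cdot,\tau) - F_M\|_\infty \to 0$ from Theorem \ref{conc.as.csdata} then yields $v(y,\tau) \geq c_\ve /2 > 0$ there for all $\tau \geq \tau_2(\ve)$, so $\overline{\Omega(F_{M-\ve})} \subset \Omega(v,\tau)$; since $\Gamma(v,\tau)$ is disjoint from the open set $\Omega(v,\tau)$, this gives $\Gamma(v,\tau) \cap \Omega(F_{M-\ve}) = \emptyset$. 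Combining the two bounds produces \eqref{supp.fb1} with $\tau(\ve) = \max(\tau_1,\tau_2)$.

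For the Hausdorff convergence \eqref{supp.estim.sharp} I would proceed in two directions. Proposition \ref{prop.setequiv} combined with \eqref{S1k} yields $d_H(\Gamma(F_{M \pm \ve}), \Gamma(F_M)) \leq C\ve$ on the bounded positivity sets, so the annular region containing $\Gamma(v,\tau)$ lies in a $C\ve$-tubular neighbourhood of $\Gamma(F_M)$, giving $d_1(\Gamma(v,\tau),\Gamma(F_M)) \leq C\ve$. For the reverse distance, given $y \in \Gamma(F_M)$ the mass scaling produces nearby points $z^\pm \in \Gamma(F_{M \pm \ve})$ with $|z^\pm - y| \leq C\ve$; by the previous step $z^- \in \Omega(v,\tau)$ (so $v(z^-,\tau) > 0$) while $z^+ \notin \Omega(F_{M+\ve}) \supset \Omega(v,\tau)$ (so $v(z^+,\tau) = 0$), and the continuity of $v$ along the segment $[z^-, z^+]$ produces a point of $\Gamma(v,\tau)$ within $O(\ve)$ of $y$, hence $d_2 \leq O(\ve)$.

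The step I expect to be the main obstacle is the inner exclusion $\overline{\Omega(F_{M-\ve})} \subset \Omega(v,\tau)$: it is the only place in the argument that genuinely requires the $L^\infty$ asymptotics of Theorem \ref{conc.as.csdata}, whose proof already absorbed most of the geometric machinery (quantitative positivity, Aleksandrov monotonicity, H\"older regularity in regions of non-degeneracy). The $L^1$ convergence alone would leave open the possibility that $v(\cdot,\tau)$ vanishes on a thin set well inside $\Omega(F_M)$, allowing $\Gamma(v,\tau)$ to cut through its interior and destroying the sharp sandwich. The other ingredients — Proposition \ref{reduction} for the outer bound, and the continuity of the family $\{\Omega(F_M)\}$ under mass scaling for the thickness of the annulus — are comparatively direct consequences of results already established.
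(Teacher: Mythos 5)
Your proposal is correct and follows essentially the same route as the paper: the outer inclusion comes from Proposition \ref{reduction} (with an intermediate mass to handle the closure, a point the paper glosses over), the inner exclusion ultimately rests on the uniform convergence of Theorem \ref{conc.as.csdata} giving positivity of $v(\cdot,\tau)$ on compact subsets of $\Omega(F_M)$, and the Hausdorff statement follows from the comparability of the scaled sets as in Proposition \ref{prop.setequiv}. Your direct argument for the inner bound (uniform positivity on $\overline{\Omega(F_{M-\ve})}$, hence no free-boundary points there) and the segment-crossing construction for $d_2$ merely make explicit what the paper assembles through its chain of directional-distance statements and leaves as ``follows easily''.
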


\noindent{\sl Proof.} Without loss of generality we may assume that $M=1$ as we have done in previous sections. The inclusion $\Gamma( v,\tau)\subset \Omega(F_{1+\ve})$ is a direct consequence of
Proposition \ref{reduction}. For the other inclusion we start from \eqref{Th 7.2 formula 1}
$$
d_1(\Omega(F_1), \Omega(v,\tau)) \to 0 \qquad \mbox{as } \ \tau\to 0,
$$
that is easily translated into
$$
d_1(\Gamma(F_1), \Omega(v,\tau)) \to 0 \qquad \mbox{as } \ \tau\to 0,
$$
and then into
$$
d_1(\Gamma(F_1), \Gamma(v,\tau)) \to 0 \qquad \mbox{as } \ \tau\to 0.
$$
By the expansion properties of the family $F_{\blue 1 \magenta}$ we easily see that there is a constant $c$ such that
$$
d_H(\Gamma(F_1), \Gamma(F_{1-\ve})= d_H(\Gamma(F_1), \Omega(F_{1-\ve})\ge c\ve.
$$
Therefore, the triangle inequality implies that for large $\tau\ge \tau(\ve)$ we find that
$\Gamma(v,\tau)$ is disjoint with $\Omega(F_{1-\ve})$, and also with $\Gamma(F_{1-\ve})$. At this point the last assertions of the theorem follows easily.  \qed


\section{Comments and open problems}\label{sec.comments}

$ \bullet  $ {\bf The isotropic case.}  The main results of our paper are well-known in the isotropic case $u_t=\Delta u^m$. The rigorous mathematical study started around 1950  with the work of Barenblatt et al., \cite{Bar52}. and it took around 50 years to be reasonably complete, it is covered in the book \cite{Vlibro}. It includes the theory of existence and uniqueness of the different initial and boundary value problems, the basic estimates and inequalities and the theory of regularity. For comparison with the main results of this paper, we mention that the existence of a self-similar solution taking a Dirac delta as an initial function is guaranteed by the explicit Barenblatt solution
 $$U_M(x,t)=t^{-\alpha}F_M(t^{-\alpha/N}x),
 $$
 where $F_M$ is given by formula \eqref{Fm} with parameter $\alpha=N/(N(m-1)+2)>0$.
 This is what we call the SSF solution. Its uniqueness is also guaranteed in the class of weak solutions. This has to be compared with Theorem \ref{fundamental solution}
of our paper, which needs much more elaborate arguments. The first main difference is this one: in isotropic case the solutions are radially symmetric with respect to  the space variables, so the existence calculations become one-dimensional.

Concerning the asymptotic behaviour that we describe in Theorems \ref{thmasympto.cs}  and \ref{conc.as.csdata}, the isotropic version is a classical topic in the nonlinear diffusion theory after the works of Kamin \cite{Kam73, Kam76}. The isotropic result is studied in great detail in
in Chapters 18 and 19 of  \cite{Vlibro}.

Finally, our crowning result on the large-time behaviour of supports, Theorem \ref{exp.cs} is deduced in the isotropic case from an argument that is simplified by the fact that the support of the profile is just a ball of radius $R$ (depending on the total mass $M$). Then the supports of $v(\cdot,\tau)$ are proved to be approximate balls, see Theorem 18.8 of \cite{Vlibro}.
 In geometrical terms, we interpret such result in the isotropic PME case as saying that the support of a general solution with finite mass and compact initial support evolves in time to fill a ball of radius $R(t)$ given by the closest Barenblatt solution. This known result extends in the present anisotropic case to the result of Theorems \ref{exp.cs} that the support evolves in time to fill a distorted ball which is given by the closest SSF solution, and the approximation is exact in relative error.  So the distorted balls are stable geometries  for the anisotropic model. Theorem  \ref{exp.fb} express this fact in terms of convergence of free boundaries.

We continue by remarking that in the isotropic case the relative error of the support of $v(t,\tau)$ w.r.t. $B_R(0)$ can be estimated in a very precise way as an asymptotic expansion that includes rates for the vanishing of the error. For recent results see \cite{KKV16, Seis2014}. This is open territory in our case. An so is the question of regularity of the free boundaries.

\medskip

$ \bullet  $ {\bf The fast/slow anisotropic case.} The same anisotropic Porous Medium Equation \eqref{APM} with fast directions $m_i\le 1$ mixed with slow directions $m_i>1$ is quite interesting but deserves it own paper, to be developed.

$ \bullet  $ {\bf The $p$-Laplacian model.} A similar analysis should apply to the slow diffusion regime for the anisotropic $p$-Laplacian model
\begin{equation}\label{APLs}
\partial_t u=\sum_{i=1}^N  \partial_i(|\partial_i u|^{p_i-2}\partial_i u),
\end{equation}
with $p_i>2$ for all $i$ (slow diffusion). The  computations are similar but the justification must be done. The case where some $p_i$ equals 2 is interesting.

 Vespri and collaborators have been working  this model with $p_i>2$ (slow diffusion), and some structural conditions, see  \cite{CV21, CMV23} and references. The type of questions they address is different and so are the tools. Thus, they discuss regularity questions  like the parabolic version of the Harnack inequality and the intrinsic geometry. They also study the existence and properties of the Barenblatt solutions but not the uniqueness, free boundaries or asymptotic behaviour. The fast diffusion version of that model was studied by us in \cite{FVV21}, the different type of propagation speed implies quite different qualitative behaviour.


\section*{Acknowledgments}
F. Feo and B. Volzone are partially supported by GNAMPA of the Italian INdAM (National Institute of High Mathematics) and by the "Geometric-Analytic Methods for PDEs and Applications" project CUP I53D23002420006- funded by European Union - Next Generation EU  within the PRIN 2022 program (D.D. 104 - 02/02/2022 Ministero dell'Universit\`{a} e della Ricerca). This manuscript reflects only the authors' views and opinions and the Ministry cannot be considered responsible for them.
\\
Moreover, J. L. V\'azquez was funded by  grant PID2021-127105NB-I00 from MICINN (the Spanish Government). J.~L.~V\'azquez is an Honorary Professor at Univ. Complutense de Madrid.


\

\

\end{document}